\documentclass[11pt]{article}

\usepackage{amsthm,amsfonts,amsmath,amscd,amssymb,times,epsfig,verbatim}
\usepackage[all]{xy}
\usepackage{enumerate,array}

\usepackage{color}

\newtheorem{thm}{Theorem}
\newtheorem{prop}{Proposition}
\newtheorem{lem}{Lemma}
\newtheorem{cor}{Corollary}

\theoremstyle{remark}
\newtheorem{rem}{Remark}
\theoremstyle{definition}
\theoremstyle{axiom}

\newcommand{\C}{\mathbb{C}}

\newcommand{\R}{\mathbb{ R}}

\newcommand{\Z}{\mathbb{ Z}}

\newcommand{\nocases}[2]{\genfrac{}{}{0pt}{}{#1}{#2}}

\title{Cohomology of  symplectic $T^n$ - reductions and compactifications of $\mathcal{M}_{0, n}$ }

\author{Victor M.~Buchstaber and Svjetlana Terzi\'c}

\begin{document}

\maketitle

\begin{abstract}
 A symplectic $T^n$ - reduction on  a complex Grassmann manifold $G_{n,2}$ for the canonical action of the maximal compact torus    depends   on the $S_n$ -  orbit of a  maximal chamber in a  hypersimplex $\Delta _{n,2}$. The   chamber decomposition of $\Delta _{n,2}$ is defined by the admissible polytopes, which can be realized as matroids.   In our previous work we described this chamber decomposition  by means of the hyperplane arrangement.  It is important to note that to any chamber it  corresponds the  compact space, which is a smooth compact manifold for a chamber of maximal dimension.  In this paper we obtain  explicit  description of the  cohomology rings of  the    symplectic $T^n$ - reductions  on $G_{n,2}$  for the standard moment map  in terms of the chamber decomposition of $\Delta _{n,2}$ and  the  well known results  of Kirwan and  Goldin.

We recently introduced the Hassett category whose objects are special compactifications of the space $\mathcal{M}_{0, n}$. The initial object in this category  is the  Deligne-Mumford compactification $\overline{\mathcal{M}}_{0, n}$.   In this paper we 
 describe the cohomology rings of  various  compactifications for $\mathcal{M}_{0, n}$,  which correspond to the objects of  the  Hassett category.
Our results show that even in the case $n=5$ there are objects in the  Hassett category with different cohomology rings.

\end{abstract}

\section{Introduction}

Symplectic reduction is  in general defined for a connected symplectic manifold $(M, \omega)$  equipped with a Hamiltonian action of a compact connected Lie group  $G$ with  moment map $\mu : M \to \mathfrak{g}^{\ast}$. In this situation for a regular value $\xi$ of $\mu$ and coadjoint stabilizer $G_{\xi}$ for $\xi$, if $G_{\xi}$ acts freely and properly on $\mu ^{-1}(\xi)$, then there is a unique symplectic structure $\omega ^{\xi}$  on the orbit space
 $\mu ^{-1}(\xi)/G_{\xi}$  which is compatible with $\omega$, see~\cite{Lisa}. In this case we talk about  a  symplectic $G$ - reduction of symplectic manifolds. In the present paper   we consider the case when  $G$ is a compact torus $T^n$.

 Problem of  a symplectic reduction for  the standard action of the maximal compact torus on the complex Grassmann manifolds  $G_{n,k}$ is widely known. In this paper, using our previous results on the chamber decomposition of $\Delta _{n,2}$, together with the result that  spaces of parameters of the chambers coincide with some  Hassett compactifications of the moduli space $\mathcal{M}_{0,n}$,  we  study symplectic reduction related  applications  on  the  Grassmann manifolds $G_{n,2}$.

The canonical compact  torus $T^n$-action on the complex Grassmann manifolds  is an example of Hamiltonian torus action on  a symplectic manifold and provides a lot new symplectic manifolds by symplectic reduction.  For a symplectic manifold $M$ with Hamiltonian compact torus $T$-action,  the well known results of Kirwan~\cite{K}  established surjective homomorphism from $T$-equivariant cohomology of $M$ to ordinary cohomology of the symplectic reduction $\mu ^{-1}(\xi)/T$ for a regular value $\xi$ of the corresponding moment map $\mu$.  In~\cite{G} Goldin described the kernel of the Kirwan's map for $M$ being  a generic   $SU(n)$ coadjoint orbit, which is diffeomorphic  to a manifold of full  flags in $\C ^n$, as well for those degenerate $SU(n)$ coadjoint orbits $M$ which are diffeomorphic  to the complex Grassmann manifolds $G_{n,k}$.  The description was done    in terms of permuted double Schubert polynomials which are defined by successive application of divided difference operators to determinant polynomial.  Recall that in the  monograph of Fulton~\cite{F},  cohomology of flag manifolds are described in terms of Schubert polynomials, that is  in terms of divided difference operators, see also~\cite{MC}.



In our series of papers we have  developed the theory of torus actions of positive complexity.  In particular, in the focus of our studies is the canonical action of the maximal compact torus  on  the complex Grassmann manifolds $G_{n,2}$ due to  specific of their $(\C ^{\ast})^{n}$-stratification. The Grassmann manifolds $G_{n,2}$  inherit standard moment map  $\mu :  G_{n,2} \to \R ^n$ from the Pl\"ucker embedding $G_{n,2}\to \C P^{N-1}$, $N=\binom{n}{2}$, whose image is the hypersimplex $\Delta _{n,2}$.   The $(\C ^{\ast})^{n}$-orbit of a point $L\in G_{n,2}$, according to the  well know Atiyah-Guillemin-Sternberg convexity theorem, maps by the moment map $\mu$  to a convex polytope spanned by some vertices of $\Delta _{n,2}$. It is obtained in this way the family of admissible polytopes in $\Delta_{n,2}$, which further define the chamber decomposition of $\Delta _{n,2}$.   We explicitly described this chamber decomposition in~\cite{BT1} as the  lattice of the concrete hyperplane arrangement.  The orbit spaces $\mu ^{-1}(\xi)/T^n$ are, by~\cite{GM},  homeomorphic for all $\xi$, which belong to the same chamber and the resulting space we call the space of parameters of this chamber.  We constructed in~\cite{BT1} a model for an   orbit space $G_{n,2}/T^n$ in terms of this chamber decomposition, the spaces of parameters of the chambers,  the universal space of parameters and projections form the universal space of parameters to the spaces of parameters of the chambers.   The universal space of parameters $\mathcal{F}_{n}$  is  a smooth manifold  representing a suitable compactification of the $(\C ^{\ast})^n$ - orbit space of the main stratum in $G_{n,2}$. The main  stratum  consists of the points  whose admissible polytope is the whole hypersimplex $\Delta _{n,2}$.    We proved in~\cite{BT4} that $\mathcal{F}_{n}$ is diffeomorphic to the Deligne-Mumford space $\overline{\mathcal{M}}_{0, n}$.

In this paper we suggest a new approach to  Goldin's result in the case of $G_{n,2}$ and provide explicit and effective  formulas for cohomology of a symplectic reduction. More precisely,   we prove that the kernel of the Kirwan's map  can be interpreted in terms of our results on description  on the  chamber decomposition of a hypersimplex $\Delta _{n,2}$, that is in terms of normal vectors of the supporting hyperplanes   of the walls of a chamber, which come from the hyperplane arrangement, .
These formulas further  show that cohomology of symplectic reduction is isomorphic for  regular values which belong to the same orbit of a fixed chamber.

The Hassett spaces $\mathcal{M}_{0, \mathcal{A}}$ of  $\mathcal{A}$-weighted stable genus zero curves provide large family of compactifications of the space $\mathcal{M}_{0, n}$. The well known  compactifications of $\mathcal{M}_{0,n}$  such as the Deligne-Mumford and Losev-Manin compactifications are Hassett spaces.   A weight vector $\mathcal{A}$ is said to be suitable if it is  close enough to the supporting hyperplane $x_1+\ldots +x_n=2$ of a hypersimplex $\Delta _{n,2}$. We introduced in~\cite{BT} the Hassett category. The objects of this category are Hassett spaces $\mathcal{M}_{0, \mathcal{A}}$, where   $\mathcal{A}$ is a suitable vector and the Deligne-Mumford compactification of $\mathcal{M}_{0, n}$ is the  initial object in this category. The spaces from Hassett  category are by~\cite{H},~\cite{BT} isomorphic to the spaces of parameters of chambers of maximal dimension $n-1$. In this way,  cohomology of a large class    of Hassett spaces is explicitly described  in terms of the chambers in $\Delta _{n,2}$ assigned to their weight vectors.


For $G_{4,2}$, the symplectic reductions $\mu ^{-1}(\xi)/T^4$ is diiffeomorphic to  $\C P^1$  for all regular values $\xi \in \Delta _{4,2}$ for $\mu$ and,  in  our recent paper~\cite{BTA}, we proved that  the  manifolds $\mu ^{-1}(\xi)$ are homeomorphic to $S^3\times T^2$. Moreover, we proved that Deligne-Mumford compactification $\overline{\mathcal{M}}_{0,n}$ and Losev-Manin compactification $\bar{L}_{0,n}$ can be realized as a symplectic reduction on $G_{n,2}$ if and only if $n=4,5$.

We  demonstrate direct applications of our results by computing cohomology of $\overline{\mathcal{M}}_{0,5}$ and  $\bar{L}_{0,5}$ and 
compare obtained results  with their cohomology description by  means of the Chow rings and toric topology.

\section{Background}

Let $M$ be a  symplectic manifold with a Hamiltonian compact torus $T$  - action and  the moment map $\mu : M \to \mathfrak{t}^{\ast}$.  The space $\mu ^{-1}(\xi)$ is $T$-invariant for any $\xi \in \text{Im} (\mu)$ and  if  $\xi$ is a regular value of $\mu$ then  $T$ acts freely on the  submanifold $\mu ^{-1}(\xi)\subset M$.  By  the Marsden-Weinstein theorem~\cite{MW},  the quotient   
\[
M //T(\xi) = \mu ^{-1}(\xi)/T
\]
inherits symplectic structure and this symplectic manifold  is known as a symplectic reduction or symplectic quotient of $M$ by $T$-action.  Such symplectic reductions we call symplectic $T$-reductions.

The $T$ -  equivariant cohomology  of $M$ is  defined by
\[
H^{\ast}_{T}(M) = H^{\ast}(M_{T}), \;\; M_{T}=ET\times _{T}M = (ET\times M)/T ,
\]
where $ET\to BT$ is the universal bundle for $T$ and  $T$ acts freely  $ET\times M$ by the diagonal action. Since $T$ acts freely on $\mu ^{-1}(\xi)$ it follows that
\[
ET\times _{T}\mu^{-1}(\xi) \cong ET \times (\mu ^{-1}(\xi)/T) \approx  \mu ^{-1}(\xi)/T.
\]
Therefore, the  inclusion $\mu ^{-1}(\xi) \to M$ induces inclusion $ \mu ^{-1}(\xi)/T \to M_{T}$, that is homomorphism     
\[
\mathfrak{k}_{\xi} : H_{T}^{\ast}(M) \to  H^{\ast}(\mu ^{-1}(\xi)/T).
\]
 By generalizing the techniques from Morse-Bott theory Kirwan~\cite{K} proved that the homomorphism $\mathfrak{k}_{\xi}$ is an epimorphism.

\begin{rem} In more detail,  a critical point for a  smooth function  $f$ on $M$, that is  a point at which $df$ vanishes,  is said to be  non-degenerate if $\text{det}Hf\neq 0$ for the  Hessian $H$.
 An extension of the notion of a non-degenerate  critical point is a non-degenerate critical submanifold:  a connected submanifold  $N\subset M$  is said  to be non-degenerate   critical manifold for $f$ if $df =0$ along  $N$ and $H_{N}f$ is non-degenerate  on the normal bundle $\nu (N)$. If endow $\nu (N)$ with a Riemannian metric, since  $H_{N}f$  has non-zero eigen values it induces orthogonal decomposition $\nu (N) =\nu ^{-}(N)\oplus \nu ^{+}(N)$ determined by the positive and negative eigenvalues for $H_{N}f$.

A function is said to be a Morse-Bott function if  its set of critical points is a disjoint union of connected submanifolds of $M$, each of which is non-degenerate manifold for $f$.

Let $c$ be a critical value  for a Morse-Bott function $f$ and $N_i$ the connected components of the critical level set $f^{-1}(c)$. Then $N_i$ are non-degenerate critical manifolds for $f$ and denote by $D^{-}_{N_i}$ and  $S^{-}_{N_i}$ the disc, respectively sphere bundle in $\nu ^{-}(N_i)$.  Now, let $\varepsilon >0$ is such that $c$ in the only critical value in $(c-\varepsilon, c+\varepsilon)$ and $M^{c\pm \varepsilon} =  f^{-1}((-\infty, c\pm \varepsilon])$. For a symplectic manifold $M$ with Hamiltonian torus $T$ -action one considers the equivariant long exact sequence 
\[
\ldots \to H_{T}^{\ast}(M^{c+\varepsilon}, M^{c-\varepsilon}) \to H_{T}^{\ast}(M^{c+\varepsilon}) \to H_{T}^{\ast}(M^{c-\varepsilon})\to \ldots,
\]
 then the sequence given by excision and Thom isomorphism
\[
H_{T}^{\ast}(M^{c+\varepsilon}, M^{c-\varepsilon}) \to \oplus _{i}H^{\ast}_{T}(D^{-}_{N_i}, S^{-}_{N_i}) \to \oplus_{i} H _{T}^{\ast -\lambda _i}(D^{-}_{N_i}),
\]
and
\[
H_{T}^{\ast}(M^{c+\varepsilon}) \to \oplus _{i} H_{T}^{\ast}(D^{-}_{N_i}) \to \oplus _{i}H_{T}^{\ast}(N_i).
\] 
When composed these three diagrams commute and the map  $\oplus_{i} H _{T}^{\ast -\lambda _i}(D^{-}_{N_i}) \to \oplus _{i}H_{T}^{\ast}(N_i)$ is given by the multiplication with equivariant Euler class and, thus injective.  Therefore, we obtain short exact sequence $0\to  H_{T}^{\ast}(M^{c+\varepsilon}, M^{c-\varepsilon}) \to H_{T}^{\ast}(M^{c+\varepsilon}) \to H_{T}^{\ast}(M^{c-\varepsilon})\to 0$.

If we take $c = c_{max}$ the set $M^{c+\varepsilon}$ becomes $M$, while repeating inductively the argument we arrive  to $f^{-1}(c_{min})$ and obtain the surjection $H^{\ast}_{T}(M)\to H^{\ast}_{T}(f^{-1}(c_{min}))$.

Kirwan considered the function $f : M\to \R$, $f(m) = \| \mu (m) \|^2$. It is not a Morse-Bott function, but its   minimal level set is $\mu ^{-1}(0)$. This function  satisfies more general condition which is formulated as being  minimally degenerate function~\cite{K} and this condition is proved  in~\cite{HK} to be local. Kirwan in~\cite{K} showed that the usual  techniques of Morse-Bott theory can be applied to a minimally degenerate function even it is not Morse-Bott function. Altogether these  result yield to the surjection $H^{\ast}_{T}(M)\to H^{\ast}_{T}(f^{-1}(0)) \cong H^{\ast}(\mu ^{-1}(0)/T)$. 
\end{rem}

In the paper~\cite{G}, the explicit formulas for the cohomology rings  with complex  coefficients  of the symplectic quotients of the  complete complex flag manifolds  $Fl(n)$ and  complex Grassmann manifolds $G_{n,k}$  related to the canonical actions of the maximal compact torus $T^n$, are obtained. In order to formulate these results we recall some facts and notions from~\cite{G} and~\cite{A}.

 Let  $M= \mathcal{O}_{\lambda}$ be the   coadjoint orbit of a compact, semi-simple Lie group $G$ through a point $\lambda \in \mathfrak{t}^{\ast}$ and consider the action of  the maximal compact torus $T\subset G$ on $\mathcal{O}_{\lambda}$.  For $G=U(n)$ the Lie algebra $\mathfrak{g}^{\ast}$ can be identified with Hermitian matrices, so $\lambda$ can be considered as a real diagonal matrix $(\lambda _1, \ldots, \lambda _n)$ and  $\mathcal{O}_{\lambda}$ as an adjoint orbit of $G$ through $\lambda$. The symplectic reduction   
$\mathcal{O}_{\lambda}//T(\xi)$ for $G=U(n)$  is called  a weight variety.

We present, following~\cite{A}  some   classical notions and  results in  detail in order the presentation to be self contained. Let $\mathcal{H}$ denotes the set of all $n\times n$ complex  Hermitian matrices. The group $U(n)$ acts on $\mathcal{H}$ by conjugation 
\[
A \cdot H = AHA^{-1}.
\]
The  orbits for this action are  manifolds $\mathcal{O}_{\lambda} \subset \mathcal{H} $ with given spectrum $\lambda =(\lambda _1, \ldots, \lambda _n)$. For a fixed $\lambda$ the stabilizer of $\mathcal{O}_{\lambda}$ is $U(k_1)\times \cdots \times U(k_l)$, where $k_1, \ldots, k_l$ , $ k_1+\cdots +k_l=n$,  are the numbers of  elements equal to each other among the  set $\{\lambda_1, \ldots ,\lambda _n\}$.  Therefore, the manifolds $\mathcal{H}_{\lambda}$ are diffeomorphic to the partial flag manifolds $U(n)/U(k_1)\times \cdots \times U(k_l)$. 

More geometrically, put  $\lambda _1\leq \ldots \leq \lambda _n$ and let $\lambda _1=\ldots =\lambda _{k_1}$, $\lambda _{k_1+1} = \ldots = \lambda _{k_2}$, $\ldots$, $\lambda _{k_1+\ldots +k_{l-1}+1} = \ldots =\lambda _{k_l}$.  Then  the eigenspaces of an element  $H \in \mathcal{H}$ are  pairwise orthogonal subspaces $P_{1}, \ldots , P_l$, where $\dim P_i=k_i$, $1\leq i\leq l$. It  is obtained a diffeomorphism  from $\mathcal{O}_{\lambda}$ to the partial flag manifold  $Fl_{n; (k_1, \ldots k_l)}(\C)$  which is  given by
\[
\Phi _{\lambda} : \mathcal{O}_{\lambda } \to Fl_{n; (k_1, \ldots, k_l)}, \;\; H \to \{( P_1, \ldots , P_l)\}.
\]

In particular,  for $l=2$, that is $\lambda = (\underbrace{\lambda _1, \ldots, \lambda _1}_{\substack{k}}, \underbrace{\lambda _2, \ldots , \lambda _2}_{\substack{n-k}})$,  it follows that $\mathcal{O}_{\lambda}$ is diffeomorphic to the Grassmann manifold $G_{n,k}(\C )$.  It is used notation $G_{n,k}(\lambda)$ to indicate the assigned orbit.

\subsection{Symplectic forms, invariant  complex structures, moment maps}

The orbits $\mathcal{O}_{\lambda}$ are symplectic manifolds with the  standard Kirilov-Konstant-Souriau symplectic form.   In detail,  $U(n)$ acts smoothly  on $\mathfrak{u}^{\ast}(n)$ by coadjoint action, where  $T_{I}U(n) = \mathfrak{u}(n) =i\mathcal{H}$, and for  an element $X \in \mathfrak{u}(n)$  this action gives a vector field $\hat{X}$ on $\mathfrak{u}^{\ast}(n)$ defined by $\hat{X}_{\lambda}(Y)  = \lambda ([X, Y])$ for $Y\in \mathfrak{u}(n)$ and $\lambda \in \mathfrak{u}^{\ast}(n)$.  Thus,  the tangent  space to $\mathcal{O}_{\lambda}$ at $\lambda \in \mathfrak{t}^{\ast}$ is $\{\hat{X}_{\lambda}: X\in \mathfrak{u}(n)\}$.   One considers a $2$-form  $\omega_{\lambda} $ on $\mathcal{H}$  defined by
\[
\omega _{\lambda} (\hat{X}, \hat{Y}) = - \lambda ( [X, Y]) .
\]
 This   form $\omega _{\lambda}$ is  closed and it restricts to a nondegenerate form on $\mathcal{O}_{\lambda}$, known as  the  Kirilov-Konstant-Souriau symplectic form on $\mathcal{O}_{\lambda}$.

\begin{rem}
In this way there are exhibited a lot symplectic forms $\omega _{\lambda} \circ d \Phi _{\lambda}^{-1}$ on partial flag manifolds.  In particular on Grassmann manifolds $G_{n,k}$, it is obtained a family of symplectic forms,  one for each pair of distinct real numbers $\lambda _1, \lambda_2$.
\end{rem}

 We note, that  it is a classical fact that a Riemannian metric on $\mathcal{O}_{\lambda}$  together with the symplectic structure $\omega _{\lambda}$ produces an  almost complex structure on $\mathcal{O}_{\lambda}$.

\begin{rem}
Any  $U(n)$-invariant metric on $\mathcal{O}_{\lambda}$ together with the symplectic form $\omega_{\lambda}$ will produce  $U(n)$-invariant almost complex structure $J_{\lambda}$   on $\mathcal{O}_{\lambda}$, that is on the  corresponding  partial flag manifold.  For the normal metric on $\mathcal{O}_{\lambda}$, which is $U(n)$-invariant metric  induced by the $\text{Ad}_{U(n)}$-invariant  scalar product on $\mathcal{H}$, the structure $J_{\lambda}$ is  explicitly described in terms of  positive  roots   $\alpha _1, \ldots \alpha _n$  for   $\text{ad}_{\lambda}$ on  $T_{\lambda}(\mathcal{O}_{\lambda}) \cong \text{Im}(\text{ad}_{\lambda})$. If $T_{\lambda}$ is a maximal compact torus   of the isotropy subgroup $U_{\lambda}(n)$ for an orbit $\mathcal{O}_{\lambda}$, then $\alpha _1, \ldots \alpha _n$ are   the complementary roots for $U(n)$    related to the roots for $U_{\lambda}(n)$, both with respect  to the torus $T_{\lambda}$.    Precisely, on the root  subspace which corresponds to a root $\alpha _j$, the structure $J_{\lambda}$ is defined by $J_{\lambda} = \frac{1}{\alpha _j}\text{ad}_{\lambda}$.   Any such structure $J_{\lambda}$ is known to be integrable, that is  to be  a complex structure.  In the case  Grassmann manifolds $G_{n,2} =U(n)/U(2)\times U(n-2)$, there is, up to conjugation, only one system of complementary positive roots,  that is only one, up  to conjugation, invariant complex structure. This implies that   all complex structures on $G_{n,2}$  defined by $J_{\lambda}$  coincide. 
\end{rem}

Further, let $T$ be the compact torus  which is given by the diagonal unitary matrices and consider its  action on  an orbit   $\mathcal{O}_{\lambda}$. 

The map $\mu _{\lambda} : \mathcal{O}_{\lambda} \to \R^n$ which assigns to any such  Hermitian matrix its diagonal entries, is a moment map for the given  $T$-action.

\begin{rem}
In this way there are exhibited a lot moment maps $\mu _{\lambda}\circ \Phi _{\lambda}^{-1}$ on Grassmann manifolds $G_{n,k}$, one for each pair of distinct real numbers.
\end{rem}

Identification of Grassmann manifolds with corresponding  $\mathcal{O}_{\lambda}$ leads to the  notations     $G_{n,k}(\lambda)\!/\!/ T(\xi)$ which denotes   the symplectic reduction defined by a regular  value $\xi$ of the corresponding  moment map $\mu _{\lambda}$.



\subsection{ Cohomology of symplectic reductions for  complete complex  flag manifolds and complex  Grassmann manifolds}

We recall the necessary notions in order to formulate the  results of Goldin from~\cite{G}.
 
 For $\lambda \in \mathfrak{t}^{\ast}$ and any permutation $w\in S_n$ the notation $\lambda _{w}$ is used for the point $w\lambda w^{-1}\in \mathfrak{t}^{\ast}$,  where $w$ is considered as an element of $U(n)$. Also for any $u\in \mathfrak{t}$ and any $\tau \in S_n$, by $u_{\tau}$ is denoted the permutation by $\tau$  of variables of $u$.   

The divided difference operator  $\partial _{i}$, $1\leq i\leq n$  of a polynomial $f$, whose variables contain $x_i$ and $x_{i+1}$ is defined by
\[
\partial _{i}  f = \frac{f(x_i, x_{i+1}) - f(x_{i+1}, x_i)}{x_i-x_{i+1}}.
\]
The resulting function is  a polynomial as well. The Weyl group of $SU(n)$ is the symmetric group $S_n$ and it is generated by simple transpositions which interchange $i$ and $i+1$. It means that any $w\in S_n$ can be written as $w=s_{i_1}\cdots s_{i_l}$. This is said to be a reduced word if  expression is such that  $l$ is minimal.  Note that a permutation $w$ may have  different reduced expressions.   For a reduced word it is defined the operator
\[
\partial _{s_{i_1}\cdots s_{i_l}} = \partial _{i_1}\partial _{i_2}\cdots \partial _{i_l}.
\]
This operator  depends only on $w$, meaning it does not depend on a reduced word for $w$.  In this way, for any $w\in S_n$,  it is defined the divided difference operator  by
\[
\partial_{w}= \partial _{i_1}\cdots \partial _{i_l}
\]
where $s_{i_1}\cdots s_{i_l}$ is a  reduced word for $w$. Further, the determinant polynomial $\Delta \in \C [x_1, \ldots, x_n, u_1, \ldots , u_n]$ is defined by
\[
\Delta (x, u) =\prod _{i<j}(x_i-u_j).
\]
The polynomial  $\Delta$ is a polarization of the standard determinant polynomial.
In what follows,  $\partial _{w}\Delta (x, u)$  denotes  the action of $\partial _{\omega}$ on variables $x_{i}$.

In the paper~\cite{G} the following formulas  are proved to hold:

\begin{thm}\label{GoldinO}

The  cohomology of $\mathcal{O}_{\lambda}//T(\xi)$ is isomorphic to the ring
\begin{equation}
\frac{\C[x_1, \ldots , x_n, u_1, \ldots , u_n]}{\langle \sigma _{i}(x_1, \ldots, x_n) - \sigma _{i}(u_1, \ldots , u_n), \sum \limits_{i=1}^{n}u_i, \partial _{\nu ^{-1}}\Delta (x, u_{\tau})\rangle},
\end{equation}
for all $\nu$ and $\tau$ such that $\sum _{i=k+1}^{n}\lambda _{\nu (i)} < \sum _{i=k+1}^{n}\xi _{\tau (i)}$  for some $k=1, \ldots , n-1$ and $\deg x_i = \deg u_i =2$.
\end{thm}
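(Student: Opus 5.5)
The plan follows Goldin's strategy: Kirwan surjectivity together with localization to the $T$-fixed points. The first step is to fix the presentation of the $T$-equivariant cohomology of $\mathcal{O}_{\lambda}\cong Fl(n)$ for generic $\lambda$. Since $\mathcal{O}_{\lambda}=U(n)/T$, we have $H_T^{\ast}(\mathcal{O}_\lambda)\cong H^{\ast}(BT)\otimes_{H^{\ast}(BU(n))}H^{\ast}(BT)$. Writing $x_i$ for the Chern roots of the tautological flag and $u_i$ for the generators of $H^\ast(BT)$, this is
\[
\C[x_1,\ldots,x_n,u_1,\ldots,u_n]/\langle \sigma_i(x)-\sigma_i(u)\rangle .
\]
The extra relation $\sum u_i$ reflects the fact that the diagonal circle acts trivially. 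One therefore really reduces by $T/S^1$, and the moment image lies in the hyperplane $\sum \xi_i=\sum\lambda_i$. By Kirwan's theorem recalled in the Background, $\mathfrak{k}_\xi$ is surjective. The cohomology of $\mathcal{O}_\lambda/\!/T(\xi)$ is thus this ring modulo $\ker\mathfrak{k}_\xi$, and the remaining task is to show that the kernel is generated by the classes $\partial_{\nu^{-1}}\Delta(x,u_\tau)$ satisfying the stated inequality.

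Next, describe $\ker\mathfrak{k}_\xi$ via the Tolman--Weitsman theorem. It says that the kernel is generated by the classes $\alpha$ whose restrictions to fixed points vanish on all fixed points $p$ with $\langle\mu(p),\eta\rangle\le\langle\xi,\eta\rangle$, for some $\eta\in\mathfrak t$. Here one takes the half-spaces cut out by the circle directions $\eta$ that matter. The fixed points are the permuted diagonal matrices $\lambda_w$, with $\mu(\lambda_w)=w\cdot\lambda$. The walls relevant for regularity are the hyperplanes $\sum_{i>k}y_{\tau(i)}=\mathrm{const}$, so one needs, for each $k$ and each $\tau$, a class supported on the fixed points $\lambda_w$ lying on the far side of the wall through $\xi$.

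The key construction is to produce such classes. Under $x_i\mapsto u_{w(i)}$ the restriction of $\Delta(x,u)=\prod_{i<j}(x_i-u_j)$ to the fixed point $\lambda_w$ vanishes unless $w=w_0$, so $\Delta$ is the equivariant class of a point. Applying $\partial_{\nu^{-1}}$ in the $x$-variables gives the double Schubert class, whose support consists of the Bruhat interval above $\nu$. After substituting $u\mapsto u_\tau$, this becomes the permuted double Schubert class, whose support is the $\tau$-translated Bruhat cell. One then checks via the GKM description of $H_T^\ast$ (localization is injective, since the flag manifold is equivariantly formal) that these classes vanish on the good side. The inequality $\sum_{i>k}\lambda_{\nu(i)}<\sum_{i>k}\xi_{\tau(i)}$ should be exactly the condition that every fixed point in the support lies on the bad side of the wall. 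For the Grassmannian case, $\lambda$ degenerate, one pushes forward along $Fl(n)\to G_{n,k}$ or restricts to the $S_k\times S_{n-k}$-invariants.

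The main obstacle is the last part, \emph{generation}. One must show that the Tolman--Weitsman kernel for each half-space is spanned as an $H^\ast(BT)$-module (indeed as an ideal) by these permuted Schubert classes. I would try to prove this by showing that the permuted double Schubert classes $\{\partial_{\nu^{-1}}\Delta(x,u_\tau)\}_\nu$ form an $H^\ast(BT)$-basis of $H_T^\ast$ that is upper triangular with respect to the order induced by $\tau$ (Bruhat order twisted by $\tau$). The kernel for a half-space would then be spanned by the basis elements whose leading fixed point is in the bad region. Those are exactly the ones satisfying the inequality, because $\mu(\lambda_\nu)$ is the minimal point of the support in the $\eta$-direction.
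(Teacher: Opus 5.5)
Your architecture matches what the paper relies on when it cites Goldin: Kirwan surjectivity, injectivity of localization to the fixed points, the Tolman--Weitsman description of $\ker\mathfrak{k}_\xi$, and the permuted double Schubert classes as a triangular $H^\ast(BT)$-basis. The gap is in your second paragraph, where you keep only the half-spaces with normals $\omega_k^\tau=\sum_{i>k}e_{\tau(i)}$ because these are ``the walls relevant for regularity''. Tolman--Weitsman gives $\ker\mathfrak{k}_\xi=\sum_{\eta\in\mathfrak{t}}K_\eta$ over \emph{all} $\eta$, and nothing you wrote shows that the remaining $K_\eta$ contribute nothing new. That reduction is exactly Goldin's Corollary 4.1, which the paper singles out as the nontrivial input.

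The same gap appears in your last sentence. ``Those are exactly the ones satisfying the inequality'' only makes sense when $\eta$ is itself some $\pm\omega_k^\tau$, because the inequality is indexed by $(k,\tau)$, not by an arbitrary $\eta$. You also never say which $\tau$ to use for a given $\eta$.

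Your own tools close this, as follows.
\begin{itemize}
\item Given $\eta$, choose $\tau$ ordering its coordinates so that $\eta$ lies in the closed Weyl chamber attached to the $\tau$-classes. Modulo $(1,\ldots,1)$, this means $\eta=\varepsilon\sum_k a_k\omega_k^\tau$ with all $a_k\ge 0$ and a fixed sign $\varepsilon$. The vector $(1,\ldots,1)$ pairs to zero with $\mu(p)-\xi$, since $\sum_i\xi_i=\sum_i\lambda_i$.
\item By the tableau criterion, every $v\mapsto\langle\mu(p_v),\omega_k^\tau\rangle$ is weakly monotone, in the same direction, along the $\tau$-twisted Bruhat order. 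Hence on the support of each $\tau$-class, $\langle\mu,\eta\rangle$ is extremal at the base point. You must use this for $\eta$ on the walls of the chamber, not only for generic $\eta$.
\item With this, your leading-term argument shows that $K_\eta$ is spanned over $H^\ast(BT)$ by the $\tau$-classes whose base point $p$ satisfies the strict inequality defining $K_\eta$.
\item Since $\langle\mu(p)-\xi,\eta\rangle=\varepsilon\sum_k a_k\langle\mu(p)-\xi,\omega_k^\tau\rangle$ with $a_k\ge0$, some single term has the same strict sign. That is precisely the stated inequality for that $k$.
\item Conversely, each listed class lies in $K_{\pm\omega_k^\tau}$.
\end{itemize}
This proves generation for all $\eta$ at once, and it recovers Goldin's Corollary 4.1 instead of assuming it.

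Also tidy the bookkeeping. With $x_i\mapsto u_{w(i)}$, the polynomial $\Delta(x,u)=\prod_{i<j}(x_i-u_j)$ restricts nontrivially only at $w=\mathrm{id}$, not $w_0$. Correspondingly, $\partial_{\nu^{-1}}\Delta$ is supported on the \emph{lower} interval $\{w\le\nu\}$ (twisted by $\tau$ after $u\mapsto u_\tau$), and its base point is the Bruhat-maximal element. You need to verify, not leave as ``should be'', that $\mu$ at this base point produces $\sum_{i>k}\lambda_{\nu(i)}$. This is the $\nu$ versus $\nu^{-1}$ issue the paper flags as a misprint in Goldin's statement, together with the ordering convention for $\lambda$.
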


\begin{thm}\label{GoldinGR}
The  cohomology of $G_{n,k}(\lambda)\!/\!/ T(\xi)$ is isomorphic to the ring
\begin{equation}
\frac{\C[\sigma _{i}(x_1, \ldots , x_k), \sigma _{i}(x_{k+1}, \ldots , x_n), u_1, \ldots , u_n]}{\langle\sigma _{i}(x_1, \ldots, x_n) - \sigma _{i}(u_1, \ldots , u_n), \sum \limits_{i=1}^{n}u_i, \partial _{\nu ^{-1}}\Delta (x, u_{\tau})\rangle},
\end{equation}
for all $\nu$ and $\tau$ such that $\sum _{i=k+1}^{n}\lambda _{\nu (i)} < \sum _{i=k+1}^{n}\xi _{\tau (i)}$ for some $k$, and $\partial _{\nu ^{-1}}\Delta (x, u_{\tau})$ is symmetric in $x_1, \ldots , x_k$ and $x_{k+1}, \ldots , x_n$. Here  $\deg x_i = \deg u_i =2$ and $\sigma _{i}$ are the elementary symmetric polynomials. In addition, $\sigma _{i}(x_1,\ldots, x_n) - \sigma _{i}(u_1, \ldots ,u_n)$ is equivalent to $\prod _{i=1}^{n}(1+u_i) -\prod_{i=1}^{n}(1+x_i)$.
\end{thm}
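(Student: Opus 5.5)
We would obtain Theorem~\ref{GoldinGR} from Theorem~\ref{GoldinO} together with Kirwan surjectivity, following Goldin's approach. The argument has three steps: a GKM description of $H_T^\ast(G_{n,k}(\lambda))$, identification of the kernel of the Kirwan map $\mathfrak{k}_\xi$, and translation of that kernel into divided differences of $\Delta$.

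\textbf{Step 1: a presentation of $H^\ast_T(G_{n,k}(\lambda))$.} The $T$-fixed points of $G_{n,k}(\lambda)$ are the coordinate $k$-planes. Equivalently, they are the cosets $S_n/(S_k\times S_{n-k})$, represented by the points $\lambda_w$. By equivariant formality we have
\[
H^\ast_T(G_{n,k}(\lambda)) \cong H^\ast(G_{n,k})\otimes H^\ast(BT).
\]
This gives the presentation
\[
\C[\sigma_i(x_1,\ldots,x_k),\sigma_i(x_{k+1},\ldots,x_n),u_1,\ldots,u_n]/\langle \sigma_i(x)-\sigma_i(u)\rangle .
\]
Here the $x_i$ are Chern roots of the tautological and quotient bundles, and the $u_i$ generate $H^\ast(BT)$. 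The relation $\sum u_i=0$ appears because the diagonal circle acts trivially, so we pass to $T/S^1$. The identity $\prod(1+x_i)=\prod(1+u_i)$ is the Whitney formula for $\gamma\oplus\gamma^\perp=\C^n$. Concretely, we would realize this ring as the $S_k\times S_{n-k}$-invariants of the flag-manifold ring from Theorem~\ref{GoldinO}, using the fibration $Fl(n)\to G_{n,k}$ with fibre $Fl(k)\times Fl(n-k)$. Pulling back along this fibration identifies $H_T^\ast(G_{n,k})$ with the $S_k\times S_{n-k}$-invariant part of $H_T^\ast(Fl(n))$.

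\textbf{Step 2: the kernel of $\mathfrak{k}_\xi$.} By the Tolman–Weitsman description of the kernel (a consequence of the Morse-theoretic argument in the Remark), $\ker\mathfrak{k}_\xi$ is generated by classes supported on the half-spaces that do not contain $\xi$. For a class $\alpha$ and a component $\xi$ of the moment map, such a class has $\alpha|_p=0$ for every fixed point $p$ with $\langle\mu(p),\eta\rangle\ge\langle\xi,\eta\rangle$. We take the relevant directions $\eta$ to be the ones cutting out partial sums of coordinates. For these directions the condition becomes
\[
\sum_{i>k}\lambda_{\nu(i)}<\sum_{i>k}\xi_{\tau(i)} .
\]

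\textbf{Step 3: identifying the kernel classes with $\partial_{\nu^{-1}}\Delta(x,u_\tau)$.} We restrict Goldin's flag-manifold generators to invariants. Goldin shows that the polynomials $\partial_{\nu^{-1}}\Delta(x,u_\tau)$ represent equivariant classes whose restrictions vanish exactly on a Bruhat-type half of the fixed points. The only generators that descend to the Grassmannian are those that are symmetric in $x_1,\ldots,x_k$ and in $x_{k+1},\ldots,x_n$. By Kirwan surjectivity we then obtain the stated quotient.

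\textbf{Main obstacle.} The hardest step is showing that these invariant generators span the entire kernel, and not merely the restricted part. If $\beta$ lies in the Grassmannian kernel, its pullback lies in the flag kernel, so by Theorem~\ref{GoldinO} it is a combination of Goldin generators. Averaging over $S_k\times S_{n-k}$, the symmetrizer $\pi^\ast\pi_\ast$ divided by the order of the group, then expresses $\beta$ through symmetric combinations of these generators. Such averaging is possible because we work with $\C$ coefficients. Checking that the averaged combinations are generated by the symmetric $\partial_{\nu^{-1}}\Delta$ themselves requires verifying that symmetrization commutes with the relevant divided difference operators. We would verify this using the fact that $\partial_{w_0^{(k)}}$ acts as a symmetrizer.
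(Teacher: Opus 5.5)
Your Steps 1 and 2 use the same ingredients the paper relies on: Kirwan surjectivity, the Tolman--Weitsman kernel, $H^\ast_T(G_{n,k})$ as the $S_k\times S_{n-k}$-invariants of $H^\ast_T(Fl_n)$, and the restriction to permuted fundamental weights $\eta$ (Goldin's Corollary 4.1). Note that the paper does not prove the theorem itself; it quotes it from Goldin, whose argument applies Tolman--Weitsman directly to $G_{n,k}$. Your attempt to deduce it formally from Theorem~\ref{GoldinO} has a gap at exactly the step you call the main obstacle.

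First, ``its pullback lies in the flag kernel'' is not automatic. Theorem~\ref{GoldinO} concerns a generic $\lambda'$, while $G_{n,k}(\lambda)$ carries the degenerate $\lambda$, so the two Kirwan maps come from different symplectic forms and moment maps. You must choose $\lambda'$ close to $\lambda$ and compare the fixed-point vanishing conditions. This works only because, for regular $\xi$, no partial sum $\sum_{i>j}\xi_{\tau(i)}$ equals a partial sum of $\lambda$. Then $\{w:\langle w\lambda',\eta\rangle>\langle\xi,\eta\rangle\}=\{w:\langle w\lambda,\eta\rangle>\langle\xi,\eta\rangle\}$ for every permuted fundamental weight $\eta$, and this set is stable under right multiplication by $S_k\times S_{n-k}$.

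Second, and more seriously, the decisive claim is left unproved, and the tools you name do not give it. That claim is that the invariant part of the flag kernel is generated by the symmetric $\partial_{\nu^{-1}}\Delta(x,u_\tau)$. Averaging is tautological: $\pi^\ast\beta$ is already invariant, so averaging $\sum_j c_jg_j$ just returns $\pi^\ast\beta=\sum_j\mathrm{Av}(c_jg_j)$. Moreover the terms $\sigma(g_j)$ are not permuted double Schubert polynomials. Divided differences do not commute with symmetrisation: $\partial_i\circ\mathrm{Av}=0$ for $s_i\in S_k\times S_{n-k}$, while $\mathrm{Av}(\partial_1x_1)=1$. Nor is the claim formal: for an ideal $I=\langle G\rangle$ with a group action, $I^W$ need not be generated by the invariant members of $G$ (e.g.\ $\langle x_1,x_2\rangle\subset\C[x_1,x_2]$ under $S_2$).

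What closes the gap is structure from Goldin's proof, not the statement of Theorem~\ref{GoldinO}:
\begin{itemize}
\item Each $K_\eta$ is spanned over $\C[u_1,\dots,u_n]$ by the permuted double Schubert classes supported in the complementary Bruhat upper set.
\item By the fixed-point formula for $\partial_i$ and the right-stability above, $K_\eta$ is preserved by $\partial_i$ for $s_i\in S_k\times S_{n-k}$.
\end{itemize}
Now let $w_P$ be the longest element of $S_k\times S_{n-k}$ and choose $f$ with $\partial_{w_P}f=1$. Any invariant $\beta$ in the kernel satisfies $\beta=\partial_{w_P}(f\beta)$. Here $f\beta$ is a $\C[u]$-combination of such Schubert classes. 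Each $\partial_{w_P}\partial_{\nu^{-1}}\Delta(x,u_\tau)$ is either $0$ or $\partial_{(\nu w_P)^{-1}}\Delta(x,u_\tau)$, which is symmetric and still lies in $K_\eta$, i.e.\ is one of the listed generators. Without these two facts, your Step 3 remains an assertion.
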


\begin{rem}
There is a misprint in the formulation of  Theorem~\ref{GoldinO} and Theorem~\ref{GoldinGR} given in~\cite{G} in which it is written $\partial_{\nu}\Delta (x, u_{\tau})$ instead of $\partial_{\nu ^{-1}}\Delta (x, u_{\tau})$. This is clear since the permuted double Schubert polynomials are defined by $\mathfrak{I}^{i_{d}}_{w}= \partial _{w^{-1}}\Delta$.
\end{rem}

The proof of these results is based on the following statements by Kirwan, Chang-Skjelbred and Tolman - Weitsman. One deals with   a  symplectic manifold $M$  equipped with Hamiltonian    torus $T$ - action, $\mu$ is  the corresponding moment map and $\xi \in \mathfrak{t}^{\ast}$  is a regular value of $\mu$. There is a restriction in equivariant cohomology from $M$   to  the level $\xi$-set $\mu^{-1}(\xi)$. In addition, the rational equivariant cohomology of the level $\xi$-set $\mu^{-1}(\xi)$ coincides with the regular cohomology  of the quotient $\mu ^{-1}(\xi)/T$.  Since $T$ acts trivially on $M^{T}$, for the fixed point set $M^{T}$,  the Borel construction for $M^{T}$  is given by 
$M_{T}^{T}  = BT\times M^{T}$ and the inclusion  $M^{T}\to M$ induces inclusion $r : M_{T}^{T}\to M_{T}$, that is a homomorphism $r^{\ast} :   H^{\ast}_{T}(M) \to H_{T}^{\ast}(M^{T})$. In the case when $M^{T}$ is a finite space,   $H^{\ast}_{T}(M^{T})$ is a $H^{\ast}(BT)$-modul over $H_{0}(M^{T})$. 

\begin{thm}\label{tri}

\begin{itemize} 
\item (Kirwan~\cite{K}) \\
The  map induced by the restriction to the level set $\mu ^{-1}(\xi)$  of a regular value $\xi$  for $\mu$
\[
\mathfrak{k}_{\xi} : H^{\ast}_{T}(M) \to H^{\ast}(\mu ^{-1}(\xi)/T)
\]
is a surjection.

\item (Chang-Skjelbred~\cite{CS}, Kirwan~\cite{K})\\
Let $M^{T}$ be the fixed point set. Then the natural map
\[
r^{\ast} : H^{\ast}_{T}(M) \to H_{T}^{\ast}(M^{T})
\]
in an inclusion.

\item (Tolman - Weitsman~\cite{TW})\\
Let $\xi$ be a regular value for $\mu$. For  $\eta \in \mathfrak{t}$ define
\[
M_{\eta}^{\xi} =\{ m\in M | \langle \mu (m), \eta\rangle \leq \langle \xi,  \eta \rangle )\},
\]
\[
K_{\eta} = \{\alpha \in H^{\ast}(M) | \text{supp} \alpha \subset M_{\eta}^{\xi}\}.
\]
Then the kernel of the natural map $k_{\xi} : H^{\ast}_{T}(M) \to H^{\ast}(\mu ^{-1}(\xi)/T)$ is the ideal $\langle K\rangle$  generated by
\[
K = \bigcup\limits _{\eta \in \mathfrak{t}}K_{\eta}.
\]
\end{itemize}
\end{thm}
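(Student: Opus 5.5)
The statement collects three results of different nature, so I would prove them separately, all resting on equivariant Morse theory of functions built from the moment map $\mu$. Throughout I would assume $M$ compact (or $\mu$ proper and bounded below), which holds for the Grassmann manifolds $G_{n,k}(\lambda)$ of interest. The common tool is the Atiyah--Bott lemma. If $f$ is a $T$-invariant Morse--Bott (or minimally degenerate) function and, at each critical component, the torus acts on the negative normal bundle without fixed nonzero vectors, then the equivariant Euler class is not a zero divisor. Consequently the long exact sequences recalled in the Remark above split into short exact sequences, and $f$ is equivariantly perfect.

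For Kirwan's surjectivity I would take $f=\|\mu-\xi\|^2$. Its minimum set is $\mu^{-1}(\xi)$, and its critical set is a union of sets $C_\beta=T\cdot\bigl(M^{T_\beta}\cap\mu^{-1}(\xi+\beta)\bigr)$, where $T_\beta$ is the subtorus generated by $\beta$. First I would verify that $f$ is minimally degenerate at each $C_\beta$. Near $C_\beta$, the component $\langle\mu,\beta\rangle$ is Morse--Bott with critical set $M^{T_\beta}$, and it serves as a minimising submanifold for $f$. Second, the subtorus $T_\beta$ acts on the negative normal directions with nonzero weights, so the Atiyah--Bott lemma applies. Running the induction of the Remark from the top critical value down to the minimum then yields $H^*_T(M)\twoheadrightarrow H^*_T(\mu^{-1}(\xi))$. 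Because $T$ acts freely on $\mu^{-1}(\xi)$, the target is $H^*(\mu^{-1}(\xi)/T)$.

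For the Chang--Skjelbred/Kirwan injectivity I would choose a generic $\eta\in\mathfrak t$ and use $\langle\mu,\eta\rangle$, which is Morse--Bott with critical set $M^T$ and has even indices. Equivariant perfection gives $H^*_T(M)\cong H^*(M)\otimes H^*(BT)$, so $H^*_T(M)$ is a free $H^*(BT)$-module. The Borel localisation theorem says that $\ker r^*$ is a torsion $H^*(BT)$-module, since $H^*_T(M\setminus M^T)$ is torsion. A torsion submodule of a free module is zero, so $r^*$ is injective.

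For Tolman--Weitsman, the inclusion $\langle K\rangle\subset\ker\mathfrak k_\xi$ is immediate: a class supported in $M^\xi_\eta$ restricts to zero on $\mu^{-1}(\xi)$, because that level set lies outside the interior of $M^\xi_\eta$ (after shrinking supports slightly). The reverse inclusion is the main obstacle. My plan is to reduce to the circle case and then induct.

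In the circle case, write $\mu^{-1}(\xi)=M_{\le\xi}\cap M_{\ge\xi}$. Perfection of $\pm\mu$ gives surjections $H_S(M)\to H_S(M_{\le \xi})$ and $H_S(M)\to H_S(M_{\ge\xi})$. A Mayer--Vietoris argument then shows that $\ker\mathfrak k_\xi$ is spanned by the classes vanishing on $M_{\le\xi}$ together with those vanishing on $M_{\ge\xi}$, and these are exactly $K_{\eta}$ and $K_{-\eta}$.

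For a general torus $T$, I would fix a splitting $T=S\times T'$ and factor $\mathfrak k_\xi$ as reduction by $S$ followed by reduction of the $T'$-space $M/\!/S$. The inductive hypothesis applies to $M/\!/S$, and Kirwan surjectivity lets me lift generators from $H_{T'}(M/\!/S)$ to $H_T(M)$. The delicate point is that a lifted class must still be supported in a half-space $M^\xi_\eta$ of $M$. I would try to arrange this by lifting through the Morse stratification of $\langle\mu,\eta\rangle$, using equivariant Thom classes of the unstable manifolds, whose supports are controlled. If that control fails, I would instead realise the kernel directly via the $f=\|\mu-\xi\|^2$ stratification, taking Thom classes of strata $S_\beta$, each supported in $M^\xi_{\beta}$, as generators.
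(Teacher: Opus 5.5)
The paper gives no proof of this theorem; it cites Kirwan, Chang--Skjelbred and Tolman--Weitsman. The only argument it sketches, in the Remark of the Background section, is your Part 1: Morse theory for the minimally degenerate function $\|\mu\|^2$, with injectivity of multiplication by the equivariant Euler class splitting the long exact sequences. Your Part 1 follows that sketch, with two small corrections:
\begin{itemize}
\item Near $C_\beta$, the minimising submanifold is the set of points flowing into the component $Z_\beta\subset M^{T_\beta}$ under $-\nabla\langle\mu,\beta\rangle$, not the function $\langle\mu,\beta\rangle$ itself. Its normal bundle is the sum of the $\beta$-negative weight spaces, and that is where $T_\beta$ acts with nonzero weights.
\item A regular value gives only a locally free action, which is harmless over $\C$.
\end{itemize}
Your Part 2, equivariant formality plus localisation, is the standard argument and is correct.

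The genuine gap is the inclusion $\ker\mathfrak k_\xi\subset\langle K\rangle$ when $\dim T\geq 2$. Your circle step is fine. In the support formulation it is pure excision, $H^*_T(M,M_{\geq})\cong H^*_T(M_{\leq},\mu_S^{-1}(\xi_S))$, and it works $T$-equivariantly, giving $\ker\kappa_S\subset\langle K\rangle$ for the reduction map $\kappa_S\colon H^*_T(M)\to H^*_{T'}(M/\!/S)$. But the induction needs $\kappa_S(\langle K\rangle)\supseteq K_{\eta'}(M/\!/S)$ for every $\eta'$, which is a relative Kirwan surjectivity
\[
H^*_T\bigl(M,\{\langle\mu,\eta'\rangle>c\}\bigr)\longrightarrow H^*_{T'}\bigl(M/\!/S,\{\langle\bar\mu,\eta'\rangle>c\}\bigr).
\]
Kirwan's Morse theory does not give this. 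The derivative of $\langle\mu,\eta'\rangle$ along $\nabla\|\mu_S-\xi_S\|^2$ is $2(\mu_S-\xi_S)\,g(X_{\eta'},X_S)$, which has no fixed sign. So the flow does not preserve the half-space, and the inductive retraction cannot be run relative to it. This lifting is exactly the non-trivial content of the Tolman--Weitsman theorem, and you only say you would try to arrange it.

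You also need to choose $S$ generically so that $\xi_S$ is a regular value of $\mu_S$. Regularity of $\xi$ for $\mu$ does not imply this: take the diagonal circle on $\C P^1\times\C P^1$ with $\xi$ the centre of the square. With that choice, $M/\!/S$ is in general only an orbifold.

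Your fallback does not close the gap either. Equivariant perfection of the $\|\mu-\xi\|^2$-stratification describes $\ker\mathfrak k_\xi$ only additively, as filtered by Gysin images of $H^{*-\lambda_\beta}_T(C_\beta)$. By the projection formula $\alpha\cdot\iota_{\beta*}1=\iota_{\beta*}(\iota_\beta^*\alpha)$, the ideal generated by the Thom classes contains only Gysin images of restrictions of global classes. You would therefore need $H^*_T(M)\to H^*_T(C_\beta)$ to be onto, and nothing in your argument provides that. There are two further problems with this route:
\begin{itemize}
\item $\overline{S_\beta}$ is in general singular, so its Thom class exists on $M$ only as a non-canonical extension.
\item Since $S_\beta$ lies in $\{\langle\mu-\xi,\beta\rangle\geq\|\beta\|^2\}$, the support condition these classes satisfy is $M^\xi_{-\beta}$, not $M^\xi_\beta$.
\end{itemize}

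Finally, your easy inclusion needs one more sentence. A class vanishing on the open set $\{\langle\mu,\eta\rangle>\langle\xi,\eta\rangle\}$ vanishes on $\mu^{-1}(\xi)$ because the gradient flow of $\langle\mu,\eta\rangle$ is $T$-equivariant and nonvanishing on $\mu^{-1}(\xi)$, so it pushes $\mu^{-1}(\xi)$ into that set. Lying on the boundary of the half-space is not by itself enough.
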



In order to apply these statements  to the case $Fl_{n}=Fl (\C ^n)$ and $M=G_{n,k}$,   the equivariant cohomology rings  $H^{\ast}_{T}(Fl_{n})$ and $H^{\ast}_{T}(G_{n,k})$ are needed and well  known.  

{It is the classical fact  that the Borel construction for $Fl_{n}$ is a subspace of $BT^n \times BT^n$ with  coordinates $u_1, \ldots u_n$ and $x_1, \ldots x_n$, respectively. In addition,  the following  commutative diagram leads to the description of the equivariant cohomology  for $Fl_{n}$ which corresponds to Kirwan's and Goldin's results:
\begin{equation*}\begin{CD}
  ET^n\times _{T^n} Fl_{n} @>{Fl_{n}}>> EU(n)\times _{T^n}Fl_{n}= BT^n\\
@VV {Fl_{n}}V       @VV{Fl_{n}}V  \\ 
 BT^n@>{Fl_{n}}>> BU(n).
\end{CD}\end{equation*}

\begin{thm}
The $T$-equivariant cohomology of the generic $SU(n)$-coadjoint orbit is
\[
H_{T}^{\ast}(Fl_{n})) = \frac{\C [x_1, \ldots, x_n, u_1, \ldots u_n]}{\langle\prod _{i=1}^{n}(1+u_i) -\prod_{i=1}^{n}(1+x_i), \sum _{i=1}^{n}u_i \rangle},
\]
where $\deg x_i = \deg u_i=2$ and  $u_i$ are the images of the classes from the  module structure $H^{\ast}_{T}(pt) \to H^{\ast}_{T}(Fl_{n})$.
\end{thm}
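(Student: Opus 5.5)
The plan is to compute $H^{\ast}_{T}(Fl_n)$ through the Borel fibration and the classical description of $H^{\ast}(Fl_n)$. The diagram above shows that $ET^n\times_{T^n}Fl_n$ is the fibre product of $BT^n\to BU(n)$ with $BT^n = EU(n)\times_{U(n)}Fl_n \to BU(n)$. Here we use $Fl_n = U(n)/T^n$, so that $EU(n)\times_{T^n}Fl_n \simeq EU(n)\times_{U(n)}(U(n)\times_{T^n}U(n)/T^n)$. More directly,
\[
ET^n\times_{T^n}U(n)/T^n \simeq ET^n\times_{T^n}EU(n)\times_{U(n)}U(n)/T^n,
\]
which is the pullback of $BT^n\to BU(n)$ along $BT^n\to BU(n)$.

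The computation then has three steps.
\begin{enumerate}
\item Take $H^{\ast}(BT^n)=\C[u_1,\ldots,u_n]$ from the left factor, the base torus acting on $Fl_n$. Take $H^{\ast}(BT^n)=\C[x_1,\ldots,x_n]$ from the right factor; these are the Chern classes of the tautological line bundles $L_i$ over $Fl_n$, extended equivariantly. Both map from $H^{\ast}(BU(n))=\C[c_1,\ldots,c_n]$ via $c_i\mapsto\sigma_i(u)$, respectively $c_i\mapsto \sigma_i(x)$.
\item The map $BT^n\to BU(n)$ is a fibration with fibre $Fl_n$. The Serre spectral sequence degenerates because all cohomology is in even degrees, and $H^{\ast}(BT^n)$ is a free module over $H^{\ast}(BU(n))$ (classical Borel theorem: $H^{\ast}(Fl_n)=\C[x]/\langle\sigma_i(x)\rangle$). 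Therefore the Eilenberg--Moore spectral sequence of the pullback collapses. Since $\operatorname{Tor}$ vanishes in positive degrees by freeness, it gives
\[
H^{\ast}_{T}(Fl_n)\cong \C[u]\otimes_{\C[c]}\C[x]=\C[x,u]/\langle \sigma_i(x)-\sigma_i(u)\rangle .
\]
Equivalently, $H^{\ast}_T(Fl_n)$ is a free $\C[u]$-module (equivariant formality, as $H^{odd}(Fl_n)=0$). One checks it is generated by the classes $x_i$ with exactly the relations $\prod(1+x_i)=\prod(1+u_i)$. These relations hold because $\bigoplus L_i$ is the trivial bundle $\C^n$, whose $T$-equivariant total Chern class is $\prod(1+u_i)$. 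Comparing Poincaré series with $H^{\ast}(BT)\otimes H^{\ast}(Fl_n)$ shows there are no further relations.
\item Account for the extra relation $\sum u_i$. The acting group in the statement is the torus of the semisimple group $SU(n)$ (the generic $SU(n)$-coadjoint orbit), equivalently $T^n$ modulo its diagonal circle, which acts trivially on $Fl_n$. Replacing $T^n$ by $T^{n-1}=T^n\cap SU(n)$ restricts $H^{\ast}(BT^n)\to H^{\ast}(BT^{n-1})=\C[u]/\langle\sum u_i\rangle$. Tensoring the free $\C[u]$-module above with this quotient gives the stated ring, by base change for equivariantly formal spaces. The identification of $u_i$ with the image of $H^{\ast}_T(pt)$ is built in.
\end{enumerate}

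The main obstacle is justifying the collapse and base-change rigorously, i.e.\ that no relations beyond $\sigma_i(x)=\sigma_i(u)$ appear. I would handle it by the freeness/Poincaré-series comparison rather than by the spectral sequence directly. Alternatively, one can check injectivity via the Chang--Skjelbred restriction $r^{\ast}$ of Theorem~\ref{tri} to the $n!$ fixed points, where $x_i\mapsto u_{w(i)}$.
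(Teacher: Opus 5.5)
Your proposal is correct and follows the paper's route. The paper likewise identifies $ET^n\times_{T^n}Fl_n$ with the fibre product of $BT^n\to BU(n)$ with itself, where its top-right corner should read $EU(n)\times_{U(n)}Fl_n=BT^n$ as you wrote, and then imposes $\sum u_i=0$ for the $(n-1)$-torus of $SU(n)$; you simply supply the Eilenberg--Moore/freeness and base-change justifications the paper leaves implicit (note only that $T^n\cap SU(n)$ and $T^n$ modulo its diagonal circle are isogenous rather than equal, which is harmless over $\C$).
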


Analogously, the Borel construction for $G_{n,k}$ is a subspace in $BT^n\times BU(k)\times BU(n-k)$ and the commutative diagram
\begin{equation*}\begin{CD}
  ET^n\times _{T^n} G_{n,k}  @>{Fl_{n}}>> EU(n)\times _{T^n} G_{n, k} =BU(k)\times BU(n-k)\\
@VV {G_{n,k}}V       @VV{G_{n,k}}V  \\ 
 BT^n@>{Fl_{n}}>> BU(n),
\end{CD}\end{equation*}
gives the description of the equivariant cohomology  for $G_{n,k}$ which cooresponds to Kirwan's and Goldin's results.}
\begin{thm}
The $T$-equivariant cohomology of the Grassmannian of complex $k$-planes in $\C ^n$ is a subring of $H_{T}^{\ast}(Fl_{n})$ and it is given by
\[
H_{T}^{\ast}(G_{n,k}) = \frac{\C [\sigma _{i}(x_1, \ldots, x_k), \sigma _{i}(x_{k+1}, \ldots , x_n), u_1, \ldots u_n]}{\langle\prod _{i=1}^{n}(1+u_i) -\prod_{i=1}^{n}(1+x_i), \sum _{i=1}^{n}u_i \rangle},
\]
where $\sigma _i$ is the i-th symmetric function and $i=1, \ldots , n$.
\end{thm}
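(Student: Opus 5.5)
The plan is to derive the description of $H_{T}^{\ast}(G_{n,k})$ from the previous theorem for $Fl_{n}$ by using the fibration $U(k)/T^k\times U(n-k)/T^{n-k}\to Fl_{n}\to G_{n,k}$. This is the standard projection sending a full flag $(V_1\subset\cdots\subset V_n)$ to $V_k$. Its fibre is $Fl_k\times Fl_{n-k}$, whose cohomology is free over the base. We will argue that the Borel construction of this fibration is again a fibre bundle over $ET^n\times_{T^n}G_{n,k}$ with the same fibre. The Leray--Hirsch theorem then makes $H^{\ast}_{T}(Fl_n)$ a free module over $H^{\ast}_{T}(G_{n,k})$, so in particular the pullback $H^{\ast}_{T}(G_{n,k})\to H^{\ast}_{T}(Fl_n)$ is injective. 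This realises $H^{\ast}_{T}(G_{n,k})$ as a subring.

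To identify the subring we use the commutative square displayed before the statement. The map $ET^n\times_{T^n}G_{n,k}\to BU(k)\times BU(n-k)$ classifies the tautological bundle $\gamma_k$ and its complement $\gamma_k^{\perp}$, viewed as $T^n$-equivariant bundles. Pulled back to the flag manifold, these bundles split into the tautological line bundles, with Chern roots $x_1,\ldots,x_k$ and $x_{k+1},\ldots,x_n$ respectively. Hence the equivariant Chern classes of $\gamma_k$ and $\gamma_k^{\perp}$ map to $\sigma_i(x_1,\ldots,x_k)$ and $\sigma_i(x_{k+1},\ldots,x_n)$. The classes $u_i$ come from $H^{\ast}(BT^n)$, and are the same on both spaces. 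The relation $\prod(1+u_i)=\prod(1+x_i)$ expresses that $\gamma_k\oplus\gamma_k^{\perp}$ is the trivial bundle $\C^n$ carrying the $T^n$-action. The relation $\sum u_i=0$ is inherited from restricting to $SU(n)$, i.e.\ from the convention already used for $Fl_n$.

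It remains to show that the image of $H^{\ast}_{T}(G_{n,k})$ is exactly the $S_k\times S_{n-k}$-invariant part of $H^{\ast}_{T}(Fl_n)$, and that this invariant part is generated by the stated classes with no further relations. Over $\C$, the image of the pullback from $Fl_n\to G_{n,k}$ is the invariant subring under the Weyl group $S_k\times S_{n-k}$ acting on the $x$-variables. This holds because the pullback corresponds to taking $W$-invariants in the Borel picture. Concretely, $H^{\ast}_{T}(G_{n,k})\cong H^{\ast}(BT^n)\otimes_{H^{\ast}(BU(n))}H^{\ast}(BU(k)\times BU(n-k))$, since the square is a pullback of fibrations with fibre $G_{n,k}$, whose cohomology is even and free. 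By Eilenberg--Moore, or again by Leray--Hirsch, the cohomology of the pullback square is the tensor product. Writing $H^{\ast}(BU(k)\times BU(n-k))=\C[\sigma_i(x_1,\ldots,x_k),\sigma_i(x_{k+1},\ldots,x_n)]$ and $H^{\ast}(BU(n))=\C[\sigma_i(x)]$ acting on $\C[u]$ through $\sigma_i(u)$ gives exactly the stated quotient. The same tensor-product argument, applied with $BT^n$ in place of $BU(k)\times BU(n-k)$, recovers the previous theorem for $Fl_n$. The inclusion of rings is then the map induced by $\C[\sigma_i(x_1,\ldots,x_k),\sigma_i(x_{k+1},\ldots,x_n)]\subset\C[x_1,\ldots,x_n]$, which is faithfully flat, so the induced map of tensor products is injective.

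The main obstacle is justifying the tensor-product (Eilenberg--Moore) description: one must check that the square is a homotopy pullback and that the relevant Tor vanishes. For the first point, both vertical maps are fibrations with fibre $G_{n,k}$, and the left one is the pullback of the right along $BT^n\to BU(n)$. For the second, $H^{\ast}(BU(k)\times BU(n-k))$ is free over $H^{\ast}(BU(n))$, because $G_{n,k}$ has even cohomology and its Serre spectral sequence collapses. With that in place, the rest is bookkeeping of generators.
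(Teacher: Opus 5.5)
Your proposal is correct and follows essentially the same route as the paper, which derives the presentation from the pullback square of Borel constructions over $BU(n)$ with fibre $G_{n,k}$; this is exactly your identification of $H^{*}_{T}(G_{n,k})$ with $H^{*}(BT^n)\otimes_{H^{*}(BU(n))}H^{*}(BU(k)\times BU(n-k))$. Your additional steps supply details the paper leaves implicit: freeness over $H^{*}(BU(n))$ so the Eilenberg--Moore spectral sequence collapses, Leray--Hirsch or faithful flatness for injectivity into $H^{*}_{T}(Fl_n)$, and passing to the maximal torus of $SU(n)$ to obtain $\sum u_i=0$.
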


In  the formulation of the above theorems, $T$ is maximal $(n-1)$-dimensional torus in $SU(n)$ that acts effectively on $M$, and accordinlgy there is an assumption that $\sum _{i=1}^{n}u_i = \sum _{i=1}^{n}x_i=0$.
\begin{rem}
We recall  that the equivariant cohomology ring of  a toric or  quasitoric manifold $M^{2n}$   for an   effective action of the compact torus $T^n$ is  given by the Stanley-Reisner ring of the corresponding moment polytope.  This cohomology ring as a ring does not distinguish toric manifolds. However, $H^{\ast}_{T^{n}}(M^{2n})$ is not only a ring, it is also an  equivariant   algebra over $H^{\ast}(BT^n)$ and an equivariant  algebra structure  is given by the projection $ET^{n}\times _{T^n}M^{2n}\to BT^n$.  It is proved by Masuda~\cite{M}, that  toric manifolds are isomorphic  as varieties  if and only if their equivariant cohomology algebras are weakly isomorphic, while quasitoric manifolds are equivariantly homeomorphic if their equivariant cohomology algebras are isomorphic. 
\end{rem}

\subsubsection{Combinatorial    description of the kernel  of the Kirwan map.} In the case when $M$ is a   complete complex  flag manifold,  by Corollary 4.1. in the paper~\cite{G}, it is provided        an explicit description of a generating set of classes for the ideal  $\langle K\rangle$  from   Theorem~\ref{tri} of Tolman-Weitsman.  Namely, for $M=\mathcal{O}_{\lambda}$ the only vectors $\eta \in \mathfrak{t}^{\ast}$  needed to define an ideal $K$ in Theorem~\ref{GoldinO} are fundamental weights and their permutations.  This means that the kernel of the Kirwan map $k_{\xi}$  is generated by  classes from $H^{\ast}(\mathcal{O}_{\lambda})$  which are zero on the set of fixed points that   map by  the moment map  to one  side of hyperplanes  parallel to codimension-one walls of the moment polytope and translated to contain $\xi$.  

In the  subsequent paper~\cite{G1} this result is   generalized by Theorem 1.6.  to an arbitrary symplectic manifold $M$  with a Hamiltonian $T$-action.

We prove here that for the  Grassmann manifolds $G_{n,2}$, the kernel of the Kirwan map  of a regular value $\xi$ is completely determined by the  supporting hyperplanes of the walls of the chamber in the hypersimplex $\Delta _{n,2}$ which contains $\xi$.

\section{Cohomology of  a symplectic reduction on $G_{n,2}$ and the chamber decomposition of $\Delta _{n,2}$} 

We focus on  the Grassmann manifolds $G_{n,2}$ due to their special importance in the theory of moduli spaces of genus zero curves. The Grassmann manifolds  $G_{n,2}$ inherit the   standard moment map from the embedding $G_{n,2} \to  \C P^{N-1}$, $N = \binom{n}{2}$ given by the Pl\"ucker coordinates.  This moment map  $\mu : G_{n,2}\to \R^{n}$ is defined by 
\begin{equation}\label{mompl}
\mu (L) =\frac{1}{\sum _{I \in \{\nocases{n}{2}\}
}|P^{I}(L)|^2}\sum _{J\in \{\nocases{n}{2}\}
}|P^{J}(L)|^2\Lambda _{J},
\end{equation}
where $\Lambda _{J} \in \R ^n$ and $\Lambda _{J}(j)=1$ for $j\in J$, while $\Lambda _{J}(j)=0$ for $j\notin J$, and by $\{\nocases{n}{2}\}$
is denoted the set of all $2$-element subsets of $\{1, \ldots , n\}$.  The image of $\mu$ is a hypersimplex $\Delta _{n,2}$, which is the convex hull of the vertices $\Lambda _{J}$, $J\in \{\nocases{n}{2}\}$.

We are interested  in finding $\lambda \in \mathcal{H}$ such that $\mathcal{O}_{\lambda}$ is diffeomorphic to $G_{n,2}$ and the moment map $\mu _{\lambda}$ coincides with the standard moment map $\mu$.

In order to see this we recall the description of Grassmann manifolds $G_{n,2}$ as  the manifold of orthogonal projection operators $P : \C^{n}\to \C^{n}$,  whose images are  $2$-dimensional subspaces. The operators  $P$  satisfy
\[
P =( P^{\ast})^{T}, \;\; \text{rank}(P) = \text{trace}(P) = 2.
\]
If  represent $L\in G_{n,2}$ by $(2\times n)$ -matrix $A_{L}$  we have that
\[
P_{L} = A_{L}^{\#} (A_{L} A_{L}^{\#})^{-1}A_{L},
\]
where $A^{\#} = (A^{\ast})^{T}$. 

Let $P_{L} = (p_{ij}(L))$ is a $(n\times n)$-matrix and $P^{ij}(L)$, $i<j$, are the  Pl\"ucker coordinates for $L$. By the classical properties of a determinant given by Laplace expansion, Cauchy-Binet identity and Jacobi' formula, it is proved in~\cite{BK}:
\begin{lem}\label{pluck}
 The entries of the projection matrix $P_{L}$  are expressed through  the Pl\"ucker coordinates  of an element $L\in G_{n,2}$ by
\[
p_{ij}(L) = \frac{\sum _{k=1}^{n}P^{ik}(L)P^{jk}(L)}{\sum _{I\in \{\nocases{n}{2}\}
}|P^{I}(L)|^{2}}.
\]
\end{lem}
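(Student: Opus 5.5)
We work directly from the definition $P_L = A_L^{\#}(A_LA_L^{\#})^{-1}A_L$ and expand everything in $2\times 2$ minors. Write $A_L$ with rows $a=(a_1,\ldots,a_n)$ and $b=(b_1,\ldots,b_n)$, so that $P^{ij}(L)=a_ib_j-a_jb_i$. The Gram matrix $G=A_LA_L^{\#}$ has entries $\langle a,a\rangle$, $\langle a,b\rangle$, $\langle b,a\rangle$, $\langle b,b\rangle$, where $\langle x,y\rangle=\sum_k x_k\bar y_k$. The first step is the Cauchy--Binet identity:
\[
\det G=\det(A_LA_L^{\#})=\sum_{I\in\{\nocases{n}{2}\}}|P^{I}(L)|^{2}.
\]
This identifies the denominator of the claimed formula with $\det G$. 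It is also independent of the choice of representative $A_L$ up to the common factor $|\det g|^2$, $g\in GL_2(\C)$, which is the same factor by which the numerator scales. So the formula is well defined on $G_{n,2}$.

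The second step uses the adjugate (Jacobi) formula $G^{-1}=\det(G)^{-1}\operatorname{adj}(G)$, where $\operatorname{adj}(G)$ has rows $(\langle b,b\rangle,\,-\langle a,b\rangle)$ and $(-\langle b,a\rangle,\,\langle a,a\rangle)$. Multiplying out, with $A_L^{\#}$ having $i$-th row $(\bar a_i,\bar b_i)$ and $A_L$ having $j$-th column $(a_j,b_j)^T$, gives
\[
\det(G)\,p_{ij}(L)=\bar a_i a_j\langle b,b\rangle-\bar a_i b_j\langle a,b\rangle-\bar b_i a_j\langle b,a\rangle+\bar b_i b_j\langle a,a\rangle .
\]
Substituting $\langle x,y\rangle=\sum_k x_k\bar y_k$, the right-hand side becomes $\sum_k(\bar a_i\bar b_k-\bar b_i\bar a_k)(a_jb_k-b_ja_k)$; this is the Laplace-type regrouping, or equivalently the Binet--Cauchy identity for $2\times2$ products. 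Hence the numerator is $\sum_k\overline{P^{ik}(L)}\,P^{jk}(L)$, with the convention $P^{ik}=-P^{ki}$ and $P^{ii}=0$.

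The only real obstacle is this regrouping, together with the conjugation convention. Expanding $\sum_k(\bar a_i\bar b_k-\bar b_i\bar a_k)(a_jb_k-b_ja_k)$ produces four terms that match the four terms above one for one, so the check is mechanical. I would state the result with the complex conjugate on one factor, $\sum_k \overline{P^{ik}}P^{jk}$, or with the paper's indexing convention for the Hermitian matrix. This is the form that makes $P_L$ Hermitian, and it is presumably what the displayed formula intends; for real $L$ it reduces literally to the stated expression. As sanity checks, taking traces gives $\sum_{i,k}|P^{ik}|^2=2\sum_I|P^I|^2$, which matches $\operatorname{trace}P_L=2$. The diagonal entries $p_{ii}=\sum_k|P^{ik}|^2/\sum_I|P^I|^2$ reproduce the moment map \eqref{mompl}, which is the consequence the paper needs next.
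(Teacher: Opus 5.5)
Your proof is correct and follows essentially the route the paper indicates. The paper only names the Laplace expansion, the Cauchy--Binet identity and Jacobi's formula; you use Cauchy--Binet for $\det(A_LA_L^{\#})$, the adjugate formula for the inverse, and the four-term regrouping, which checks out term by term. You are also right that one factor must be conjugated, $p_{ij}(L)=\sum_k\overline{P^{ik}(L)}\,P^{jk}(L)/\sum_I|P^I(L)|^2$: for $L=\langle e_1,e_2+ie_3\rangle$ the literal formula gives $p_{11}=0$ instead of $1$, and the paper itself tacitly uses $|P^{ik}|^2$ when it derives the moment map from this lemma.
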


Altogether we prove:

\begin{prop}
The moment map for the $T$-action on $\mathcal{O}_{\lambda}$ for $\lambda =(1,1,0,\ldots, 0)$,  which corresponds to the  Kirilov-Konstant-Souriau symplectic form defines on $G_{n,2}$ the moment map which  coincides with the standard  moment map on $G_{n,2}$  given by the Pl\"ucker embedding $G_{n,2}\to \C P^{N-1}$, $N =\binom{n}{2}$.
\end{prop}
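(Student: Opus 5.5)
We identify $\mathcal{O}_{\lambda}$, for $\lambda=(1,1,0,\ldots,0)$, with the set of Hermitian matrices whose spectrum is $\lambda$. The plan is to show that this set is exactly the set of rank-two orthogonal projections $P_L$, and then to compare diagonal entries with the Plücker formula of Lemma~\ref{pluck}.

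The first step is to note that a Hermitian matrix $H$ with eigenvalues $1,1,0,\ldots,0$ satisfies $H^2=H$ and $\text{trace}(H)=\text{rank}(H)=2$. Hence $H$ is the orthogonal projection onto its $1$-eigenspace $L$, which is $2$-dimensional. Conversely, every $P_L$ is Hermitian with spectrum $\lambda$. In this case the diffeomorphism $\Phi_{\lambda}$ is $H\mapsto L=\text{Im}(H)$, with $P_2=L^{\perp}$ determined by $L$. Its inverse is $L\mapsto P_L=A_L^{\#}(A_LA_L^{\#})^{-1}A_L$, which is independent of the choice of $A_L$.

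The second step computes the moment map. By definition $\mu_{\lambda}(H)=(h_{11},\ldots,h_{nn})$ for the KKS form. Therefore $\mu_{\lambda}\circ\Phi_{\lambda}^{-1}(L)=(p_{11}(L),\ldots,p_{nn}(L))$. Lemma~\ref{pluck} with $i=j$ gives
\[
p_{ii}(L)=\frac{\sum_{k=1}^{n}P^{ik}(L)\overline{P^{ik}(L)}}{\sum_{I}|P^{I}(L)|^2}=\frac{\sum_{k\neq i}|P^{ik}(L)|^2}{\sum_{I}|P^{I}(L)|^2}.
\]
Here one convention point has to be checked: the numerator must read $P^{ik}\overline{P^{jk}}$, with $P^{ki}=-P^{ik}$ and $P^{ii}=0$. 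I would verify this quickly for $A_L$ with orthonormal rows. In that case $(A_LA_L^{\#})^{-1}=I$, so $p_{ij}=\sum_{s}\bar a_{si}a_{sj}$, and the Cauchy–Binet/Lagrange identity $\sum_k \bar P^{ik}P^{jk}=\ldots$ reduces to the quoted lemma.

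The third step compares with formula~(\ref{mompl}). There the $i$-th coordinate is
\[
\frac{\sum_{J\ni i}|P^J(L)|^2}{\sum_I|P^I(L)|^2}=\frac{\sum_{k\neq i}|P^{ik}(L)|^2}{\sum_I|P^I(L)|^2},
\]
because $\Lambda_J(i)=1$ exactly when $i\in J$. This coincides with $p_{ii}(L)$, which proves the proposition.

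The main obstacle is not computational. It lies in checking sign and normalization conventions: the KKS moment map might come out as $-\text{diag}(H)$, or it might be defined only up to an additive constant. I would fix the convention $\omega_\lambda(\hat X,\hat Y)=-\lambda([X,Y])$ given above and verify $d\langle\mu_\lambda,X\rangle=\iota_{\hat X}\omega_\lambda$ for $X\in\mathfrak{t}$ under the pairing $\langle H,X\rangle=\text{trace}(HX)/i$. Any leftover constant is excluded because both maps take values in the hyperplane $x_1+\cdots+x_n=2$ and have the same image $\Delta_{n,2}$.
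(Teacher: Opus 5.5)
Your proposal is correct and is essentially the paper's proof: identify $\mathcal{O}_{\lambda}$, $\lambda=(1,1,0,\ldots,0)$, with the rank-two orthogonal projections $P_L$, take $\mu_{\lambda}$ to be the diagonal, and read off $p_{ii}(L)$ from Lemma~\ref{pluck}, which matches the $i$-th coordinate of~\eqref{mompl} because $\Lambda_J(i)=1$ exactly when $i\in J$. Your remark that the lemma as printed is missing a complex conjugate is a fair refinement (for $P_L=A_L^{\#}(A_LA_L^{\#})^{-1}A_L$ the numerator is $\sum_k\overline{P^{ik}(L)}\,P^{jk}(L)$, i.e.\ the bar belongs on the first factor rather than the second as you wrote), but it is immaterial here since only $i=j$ enters.
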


\begin{proof}
For $\lambda =(1,1,0, \ldots, 0)$ we have that $G_{n,2}\cong \mathcal{O}_{\lambda} = \{ AH_{\lambda}A^{-1}\}$, where $A \in U(n)$ and $H_{\lambda}$ is a real $(n\times n)$ - matrix such that  $H_{\lambda}(11) = H_{\lambda}(22)=1, H_{\lambda}(ij) =0, \ij\neq 11, 22$. The moment map for $\mathcal{O}_{\lambda}$ defined by  the   Kirilov-Konstant-Souriau symplectic form is 
\[
\mu _{\lambda}(L = AH_{\lambda}A^{-1}) = \text{diag}( (AH_{\lambda}A^{-1}).
\]
Obviously a matrix $AH_{\lambda}A^{-1}$ defines an orthogonal projection operator $P_{L}$, which, by Lemma~\ref{pluck} implies
\[
\mu _{\lambda}(L) = (p_{11}(L), \ldots , p_{nn}(L)) = \frac{1}{\sum\limits_{I\in \{\nocases{n}{2}\}
}|P^{I}(L)|^{2}}\sum\limits_{J\in \{\nocases{n}{2}\}
}|P^{J}(L)|^2\Lambda _{J},
\]
where $\Lambda _{J}$  are defined as in~\eqref{mompl} 
\end{proof}

For an arbitrary $\lambda =(\lambda _1, \lambda_1, \lambda _2, \ldots , \lambda _2)$  this  implies:

\begin{cor}
The moment map $\mu _{\lambda}$  for the $T$-action on $\mathcal{O}_{\lambda}$  which corresponds to the  Kirilov-Konstant-Souriau symplectic form  is given by
\[
\mu _{\lambda}(L) = (\lambda _1 - \lambda _2) \mu (L)  + (\lambda _2, \ldots , \lambda_2).
\]
\end{cor}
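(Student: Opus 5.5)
The plan is to reduce the general case to the Proposition by an affine change of the spectrum. Given $\lambda=(\lambda_1,\lambda_1,\lambda_2,\ldots,\lambda_2)$ with $\lambda_1\neq\lambda_2$, write the diagonal matrix
\[
H_{\lambda}=(\lambda_1-\lambda_2)H_{(1,1,0,\ldots,0)}+\lambda_2 I,
\]
where $I$ is the identity matrix. Conjugation by $A\in U(n)$ is linear and fixes $I$. Hence for every $A$,
\[
AH_{\lambda}A^{-1}=(\lambda_1-\lambda_2)\,AH_{(1,1,0,\ldots,0)}A^{-1}+\lambda_2 I.
\]
Thus the map $\mathcal{O}_{(1,1,0,\ldots,0)}\to\mathcal{O}_{\lambda}$, $P\mapsto(\lambda_1-\lambda_2)P+\lambda_2 I$, is a $U(n)$-equivariant diffeomorphism. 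It respects the identifications with $G_{n,2}$ via $\Phi$, because the eigenspace for $\lambda_1$ of the image is exactly the image of the projection $P=P_L$. So the point $L\in G_{n,2}$ corresponds to $(\lambda_1-\lambda_2)P_L+\lambda_2 I$ in $\mathcal{O}_\lambda$.

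The moment map $\mu_\lambda$ is defined as taking diagonal entries, so
\[
\mu_{\lambda}(L)=\mathrm{diag}\big((\lambda_1-\lambda_2)P_L+\lambda_2 I\big)=(\lambda_1-\lambda_2)\,(p_{11}(L),\ldots,p_{nn}(L))+(\lambda_2,\ldots,\lambda_2).
\]
By the Proposition, or directly by Lemma~\ref{pluck}, the vector $(p_{11}(L),\ldots,p_{nn}(L))$ equals $\mu(L)$. This gives the stated formula.

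The only point needing care is consistency of the Kirillov--Kostant--Souriau structures, so that "the moment map corresponding to $\omega_\lambda$" really is the diagonal map for each $\lambda$. Under the affine map above, $\omega_\lambda$ pulls back to $(\lambda_1-\lambda_2)\,\omega_{(1,1,0,\ldots,0)}$, since the term $\lambda_2 I$ is central and contributes nothing to $\lambda([X,Y])$. The corresponding moment map is then $(\lambda_1-\lambda_2)\mu$, shifted by a constant, and the constant is fixed by the diagonal-entry normalization. I expect this bookkeeping to be the only mild obstacle; it follows directly from the linearity of $\omega_\lambda$ in $\lambda$.
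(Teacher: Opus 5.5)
Your proof is correct and follows the paper's own route, which the paper leaves implicit with the phrase that the Proposition ``implies'' the corollary. You write $H_{\lambda}=(\lambda_1-\lambda_2)H_{(1,1,0,\ldots,0)}+\lambda_2 I$, conjugate by $A\in U(n)$, and read off the diagonal entries of $(\lambda_1-\lambda_2)P_L+\lambda_2 I$ via Lemma~\ref{pluck}. Your check that $\omega_\lambda$ pulls back to $(\lambda_1-\lambda_2)\,\omega_{(1,1,0,\ldots,0)}$ is a valid consistency remark but not needed, since the paper takes the diagonal map as the moment map for every $\lambda$.
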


\begin{cor}
The image of the moment map $\mu _{\lambda}$ is a polytope $\Delta _{n,2}^{\lambda}$ which is  the convex hull of the  vertices $\Lambda _{J}^{\lambda} \in \R^n$
such that $\Lambda _{J}^{\lambda}(j) =\lambda _1$ for $j \in J$, while $\Lambda _{J}^{\lambda}(j) = \lambda _2$ for $j\notin J$.
\end{cor}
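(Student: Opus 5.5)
The statement to prove is the second corollary: the image of $\mu_\lambda$ is the convex hull of the points $\Lambda_J^{\lambda}$. I would deduce it directly from the preceding corollary, which says that $\mu_\lambda = \Phi\circ\mu$. Here $\Phi:\R^n\to\R^n$ is the affine map
\[
\Phi(x)=(\lambda_1-\lambda_2)\,x+(\lambda_2,\ldots,\lambda_2),
\]
and $\mu$ is the standard moment map~\eqref{mompl}. Hence $\text{Im}(\mu_\lambda)=\Phi(\text{Im}(\mu))=\Phi(\Delta_{n,2})$.

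\textbf{Step 1.} I would recall that $\text{Im}(\mu)=\Delta_{n,2}$ is the convex hull of the vertices $\Lambda_J$, $J\in\{\nocases{n}{2}\}$. This follows from~\eqref{mompl}. Each value $\mu(L)$ is a convex combination of the $\Lambda_J$, with nonnegative weights $|P^J(L)|^2/\sum_I|P^I(L)|^2$ summing to one. Conversely, every convex combination $\sum_J t_J\Lambda_J$ arises in this way: by the convexity theorem, the closure of the $(\C^{\ast})^n$-orbit of a point $L$ with all Plücker coordinates nonzero maps onto the full hypersimplex. This is also the statement already quoted in the paper.

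\textbf{Step 2.} An affine map sends the convex hull of a finite set onto the convex hull of the image set. Indeed, $\Phi(\sum t_J\Lambda_J)=\sum t_J\Phi(\Lambda_J)$ whenever $\sum t_J=1$. Therefore $\Phi(\Delta_{n,2})=\text{conv}\{\Phi(\Lambda_J)\}$.

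\textbf{Step 3.} I would compute the image points coordinatewise. If $j\in J$, then $\Phi(\Lambda_J)(j)=(\lambda_1-\lambda_2)\cdot 1+\lambda_2=\lambda_1$. If $j\notin J$, then $\Phi(\Lambda_J)(j)=0+\lambda_2=\lambda_2$. Thus $\Phi(\Lambda_J)=\Lambda_J^{\lambda}$, which gives the claim.

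Since $\lambda_1\neq\lambda_2$, the map $\Phi$ is an affine isomorphism. Consequently the $\Lambda_J^\lambda$ are exactly the vertices of $\Delta^{\lambda}_{n,2}$, and $\Delta^{\lambda}_{n,2}$ is combinatorially the hypersimplex. The only point needing care is the surjectivity of $\mu$ onto $\Delta_{n,2}$ in Step 1, which is standard.
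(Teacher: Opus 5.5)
Your proposal is correct and is essentially the paper's argument. The paper gives no separate proof; it treats the statement as immediate from the preceding corollary $\mu_{\lambda}=(\lambda_1-\lambda_2)\mu+(\lambda_2,\ldots,\lambda_2)$ together with $\mu(G_{n,2})=\Delta_{n,2}$, and your Steps 2--3 just make explicit that this affine map sends each $\Lambda_J$ to $\Lambda_J^{\lambda}$ and convex hulls to convex hulls.

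One small wording point in Step 1: the convexity theorem gives that every \emph{point} of $\Delta_{n,2}$ is a value of $\mu$. It does not give that every weight vector $(t_J)$ is realized as $\bigl(|P^J(L)|^2/\sum_I|P^I(L)|^2\bigr)$, and in fact the Pl\"ucker relations prevent that.
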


\begin{rem}
 Theorem~\ref{GoldinO} and Theorem~\ref{GoldinGR}  are proved in~\cite{G} under assumption on $\lambda =(\lambda _1, \ldots , \lambda _n)$ that $\lambda _1+\ldots +\lambda _n=0$. In the case $n\geq 4$ this assumption is not essential, the results hold for an arbitrary $\lambda$. In particular,  for $\lambda =(1, 1, 0,\ldots , 0)$ and $\xi \in \Delta _{n, 2}$, by taking   $\hat{\lambda} = (1-\frac{2}{n}, 1-\frac{2}{n}, -\frac{2}{n}, \ldots , -\frac{2}{n})$ we have $\mu _{\hat{\lambda}} = \mu + (\frac{2}{n}, \ldots , \frac{2}{n})$ and $\hat{\xi} =(\xi _1-\frac{2}{n}, \ldots , \xi _{n}-\frac{2}{n})$. It gives that $\nu, \tau \in S_n$ satisfy inequality $\sum _{i=k+1}^{n}\lambda _{\nu (i)} < \sum _{i=k+1}^{n}\xi _{\tau (i)}$ if and only if they satisfy inequality $\sum _{i=k+1}^{n}\hat{\lambda} _{\nu (i)} < \sum _{i=k+1}^{n}\hat{\xi} _{\tau (i)}$.
\end{rem}

Then   Theorem~\ref{GoldinGR}  for $G_{n,2}$ can be formulated as follows:

\begin{thm}\label{Gn2} 
Let $\xi$ be a regular value of the standard moment map $\mu : G_{n,2}\to \R ^n$.  The cohomology ring of the symplectic reduction $\mu ^{-1}(\xi)/T$ is the quotient of the ring of the equivariant cohomology $H^{\ast}_{T}(G_{n,2})$ by the relations:
$\partial _{\nu ^{-1}}\Delta (x, u_{\tau})$ for all $\nu, \tau \in S_{n}$ such that:
\begin{enumerate}
\item $\nu(n)\in \{3,\ldots, n\}$ and $\tau$ is  arbitrary,
\item $\{\nu (1),  \nu (n)\} = \{1,2\}$ and $\tau$ is arbitrary,
\item $\nu (n) \in \{1,2\}$, $\nu (j)\in\{1, 2\}$ for some $2\leq j\leq n-2$ and $\tau$ is such  that $\xi _{\tau (j+1)} +\ldots +\xi _{\tau (n)}>1$.
\end{enumerate}
\end{thm}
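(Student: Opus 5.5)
We derive the statement by specializing Theorem~\ref{GoldinGR} to $k=2$ and $\lambda=(1,1,0,\ldots,0)$. The proposition above identifies $\mu_\lambda$ with the standard moment map $\mu$, and the preceding remark allows us to drop the trace-zero normalization of $\lambda$. So the cohomology of $\mu^{-1}(\xi)/T$ is $H^{\ast}_{T}(G_{n,2})$ modulo the classes $\partial_{\nu^{-1}}\Delta(x,u_\tau)$. Here $(\nu,\tau)$ ranges over pairs for which there is some index $k'\in\{1,\ldots,n-1\}$ with
\[
\sum_{i=k'+1}^{n}\lambda_{\nu(i)}<\sum_{i=k'+1}^{n}\xi_{\tau(i)},
\]
and we keep only those $\partial_{\nu^{-1}}\Delta(x,u_\tau)$ that are symmetric in $x_1,x_2$ and in $x_3,\ldots,x_n$. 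It then remains to decide, for each $\nu$, for which $\tau$ this inequality holds for at least one $k'$.

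\textbf{Evaluating the left side.} Since $\lambda_{\nu(i)}$ equals $1$ exactly when $\nu(i)\in\{1,2\}$, the left-hand sum is the number $m_{k'}(\nu)=\#\{i>k' : \nu(i)\in\{1,2\}\}\in\{0,1,2\}$. The right-hand side is a tail sum of the coordinates of $\xi$. Because $\xi$ is a regular value in the interior of $\Delta_{n,2}$, we have $0<\xi_j<1$ for all $j$ and $\sum_j\xi_j=2$. Moreover, no partial sum equals an integer, since such equalities cut out walls of the chamber arrangement. This genericity makes the inequalities strict and unambiguous.

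We then split into cases according to the positions of $1,2$ in $\nu$.
\begin{enumerate}
\item If $\nu(n)\notin\{1,2\}$, take $k'=n-1$: then $m_{n-1}=0<\xi_{\tau(n)}$ for every $\tau$. This gives case 1.
\item If $\nu(n)\in\{1,2\}$ and the other element of $\{1,2\}$ is $\nu(1)$, take $k'=1$: then $m_1=1<2-\xi_{\tau(1)}$ for every $\tau$. This gives case 2.
\item Otherwise $\nu(n)\in\{1,2\}$ and the other element sits at a position $j$ with $2\le j\le n-1$. For $k'\ge j$ we have $m_{k'}=1$, and for $k'<j$ we have $m_{k'}=2$. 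The value $m_{k'}=2$ can never be beaten, since tail sums of at most $n-1$ coordinates are $<2$. So the condition is that $\xi_{\tau(k'+1)}+\cdots+\xi_{\tau(n)}>1$ for some $k'\ge j$.
\end{enumerate}
In the third case the tail sums decrease as $k'$ grows, so the weakest condition is the one at $k'=j$. This yields case 3 with the tail starting at $j+1$. When $j=n-1$ the tail is the single coordinate $\xi_{\tau(n)}<1$, so no $\tau$ works; this explains the range $2\le j\le n-2$.

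\textbf{Main obstacle.} The hard part is not the inequality bookkeeping but the symmetry clause, together with the claim that the listed relations already lie in $H^{\ast}_T(G_{n,2})$, i.e.\ are symmetric in $x_1,x_2$ and in $x_3,\ldots,x_n$. I would handle this by observing that right-multiplying $\nu^{-1}$ by elements of $S_2\times S_{n-2}$ only permutes the positions within $\{1,2\}$ and within $\{3,\ldots,n\}$. The defining inequality depends only on the set $\nu^{-1}(\{1,2\})$, so it is invariant under this action. Hence, up to the identification used by Goldin (replacing $\nu$ by a minimal-length coset representative, for which $\partial_{\nu^{-1}}\Delta$ becomes symmetric in the required blocks), the conditions above describe exactly the admissible generators. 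I expect verifying this coset compatibility with the divided difference operators to be the delicate point. I would check it first using the fact that $\partial_i f=0$ when $f$ is symmetric in $x_i,x_{i+1}$.
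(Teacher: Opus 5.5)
Your inequality bookkeeping is correct and is essentially the paper's proof. You specialize Theorem~\ref{GoldinGR} to $\lambda=(1,1,0,\ldots,0)$ and test $k=n-1$, $k=1$ and $k=j$ according to where $\nu$ places the values $1,2$. You are in fact slightly more complete than the paper: it does not say why $k>j$ gives nothing new (your monotonicity of tail sums), nor why $j=n-1$ is excluded.

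You should drop the final ``main obstacle'' paragraph. Nothing needs to be proved there, because the symmetry filter is inherited verbatim from Theorem~\ref{GoldinGR}. A non-symmetric $\partial_{\nu^{-1}}\Delta$ simply contributes no relation, which is exactly how the paper treats $\nu_3$ (i.e.\ $\nu=4213$) in the $n=4$ computation.

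The fix you propose there is also false. The polynomial $\partial_{\nu^{-1}}\Delta(x,u)$ is the double Schubert polynomial of $w_0\nu$, with the $u$-variables reversed. It is therefore symmetric in $x_i,x_{i+1}$ iff $\nu(i)>\nu(i+1)$. So symmetry in the two blocks means $\nu(1)>\nu(2)$ and $\nu(3)>\cdots>\nu(n)$, which is a condition on \emph{positions}. The Goldin inequality, by contrast, is invariant under permuting the \emph{values} of $\nu$ within $\{1,2\}$ and within $\{3,\ldots,n\}$.

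These are different cosets of $S_2\times S_{n-2}$, and a value-coset need not contain any symmetric element. For example, take $n=4$ and $\nu=4213$, which lies in case~1. Every element of its coset $\{4213,4123,3214,3124\}$ has $\nu(3)<\nu(4)$. Hence none of them, including the minimal-length one $3124$, yields a polynomial symmetric in $x_3,x_4$.
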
 

\begin{proof}
The coordinates of a regular vector  $\xi = (\xi _1, \ldots , \xi _n)\in \Delta_{n,2}$ satisfy conditions $\xi_1+\ldots +\xi _n=2$, $0<\xi _i<1$ and $\xi _{j_1}+\ldots \xi _{j_l}\neq 1$ for any subset $\{j_1, \ldots , j_{l}\}\subset \{1, \ldots , n\}$, see~\cite{BT2}. Thus, if $\nu (n)\in \{3, \ldots , n\}$  then for $k=n-1$ we have
$\sum _{i=k+1}^{n} \lambda _{\nu (i)} = \lambda _{n}=0 <\xi _{\tau (n)}$ for any $\tau \in S_n.$  If $\{\nu (1),  \nu (n)\} = \{1,2\}$, then for $k=1$ we have
$\sum _{i=k+1}^{n}\lambda _{\nu (i)}= 1 < \sum _{i=k+1}^{n}\xi _{\tau (i)}$ for any arbitrary $\tau$. 

If $\nu (n) \in \{1,2\}$, $\nu (j)\in\{1, 2\}$, $2\leq j\leq n-2$,   then for $k=j$ we obtain $\sum _{i=k+1}^{n}\lambda _{\nu (i)} =1$,  so the required inequality in Theorem~\ref{GoldinGR} holds for all $\tau$ such that   $\xi _{\tau (j+1)} +\ldots +\xi _{\tau (n)}>1$  and only for them.  For $k<j$ it follows  $\sum _{i=k+1}^{n}\lambda _{\nu (i)} =2$, so there is no adequate $\tau$. 
\end{proof}

 Let $\mathcal{A}$ be a hyperplane arrangement in $\R ^n$ with coordinates $\xi _1, \ldots , \xi _n$  introduced in~\cite{BT} for  description  of admissible polytopes  corresponding to the moment map $\mu$ as well as giving the corresponding chamber decomposition of $\Delta _{n,2}$. The arrangement $\mathcal{A}$   is given by the hyperplanes
\[
\xi _{i_1}+\ldots \xi _{i_k} = 1, \; \; 1\leq i_1<\ldots <i_k\leq n, \; 2\leq k\leq n-2.
\]
We assume $n\geq 4$ and  since $\Delta _{n,2}\subset \R^n$ lies in the hyperplane $\xi _1+\ldots \xi _n=2$, the hyperplane arrangement $\mathcal{A}$ reduces to the arrangement $\hat{\mathcal{A}}$:
\[
\xi _{i_1}+\ldots \xi _{i_k} = 1, \; \; 1\leq i_1<\ldots <i_k\leq n, \; 2\leq k\leq [\frac{n}{2}].
\]

It is proved in~\cite{BT1} that $\xi \in \Delta _{n,2}$ is a regular value of the moment map $\mu$ if and only if $\xi$ belongs to a chamber defined by the hyperplane $\hat{\mathcal{A}}$ which is of maximal dimension $n-1$. In addition, all preimages $\mu ^{-1}(\xi)$ for $\xi$ being from the same chamber are diffeomorphic, see~\cite{GM},~\cite{BT1}. In particular, this implies:

\begin{cor}
 Let  $C_{\omega}$ be a chamber defined by the arrangement $\hat{\mathcal{A}}$ such that $\dim C_{\omega} = n-1$.  The cohomology rings of the symplectic reductions $\mu ^{-1}(\xi _1)/T$ and $\mu ^{-1}(\xi _2)/T$ coincide for any $\xi _1, \xi _2\in C_{\omega}$.
\end{cor}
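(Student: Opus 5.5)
There are two natural routes: a topological one, which proves more than is asked, and a purely algebraic one via Theorem~\ref{Gn2}. The plan is to give the topological argument as the main proof and use the algebraic one as an independent check.

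\emph{Topological route.} Fix $\xi_1,\xi_2\in C_{\omega}$, a chamber of dimension $n-1$. By~\cite{BT1}, every point of $C_{\omega}$ is a regular value of $\mu$, so both $\mu^{-1}(\xi_1)$ and $\mu^{-1}(\xi_2)$ are smooth and carry free $T$-actions. Since $C_{\omega}$ is convex, the segment $\gamma(t)=(1-t)\xi_1+t\xi_2$, $t\in[0,1]$, stays inside $C_{\omega}$ and hence consists of regular values. The key step is to show that $\mu^{-1}(\gamma([0,1]))\to[0,1]$ is a proper submersion, and that it remains one after passing to the free $T$-quotient. Properness holds because $G_{n,2}$ is compact. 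Ehresmann's theorem then yields a $T$-equivariant diffeomorphism $\mu^{-1}(\xi_1)\cong\mu^{-1}(\xi_2)$; this is the statement cited from~\cite{GM},~\cite{BT1}. It descends to a diffeomorphism $\mu^{-1}(\xi_1)/T\cong\mu^{-1}(\xi_2)/T$, and hence to an isomorphism of cohomology rings. To make the descent clean, I would build the trivialization from a $T$-invariant horizontal lift of $\partial_t$, obtained by averaging a lift over $T$.

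\emph{Algebraic route.} Theorem~\ref{Gn2} presents $H^{\ast}(\mu^{-1}(\xi)/T)$ as $H^{\ast}_{T}(G_{n,2})$ modulo the relations $\partial_{\nu^{-1}}\Delta(x,u_{\tau})$. Conditions (1) and (2) on $(\nu,\tau)$ do not depend on $\xi$. The only $\xi$-dependence is in condition (3), through the set of pairs $(j,\tau)$ with $\xi_{\tau(j+1)}+\ldots+\xi_{\tau(n)}>1$. For a subset $S\subset\{1,\ldots,n\}$, the sign of $\sum_{i\in S}\xi_i-1$ is constant on any region avoiding the hyperplane $\sum_{i\in S}\xi_i=1$. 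In condition (3) we have $2\le n-j\le n-2$, so $|S|=n-j\in[2,n-2]$ and these hyperplanes belong to $\mathcal{A}$. Since $\sum\xi_i=2$, the condition $\sum_{S}\xi_i>1$ is equivalent to $\sum_{S^c}\xi_i<1$. This lets us pass to $\hat{\mathcal{A}}$ when $|S|>[\frac n2]$.

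A chamber $C_{\omega}$ of $\hat{\mathcal{A}}$ lies entirely on one side of each such hyperplane: it is open and connected, it does not meet the hyperplanes, and by regularity $\xi\in C_\omega$ never lies on them. Consequently the set of admissible $(\nu,\tau)$ in condition (3) is the same for $\xi_1$ and $\xi_2$. The two ideals therefore have identical generators and the quotient rings coincide literally, not just up to isomorphism.

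The main point needing care is that a chamber of $\hat{\mathcal{A}}$ inside $\Delta_{n,2}$ avoids every hyperplane of $\mathcal{A}$ restricted to $\{\sum\xi_i=2\}$, including those with $k>[\frac n2]$. I would handle this via the complement identity above, which identifies each such hyperplane with one of $\hat{\mathcal{A}}$.
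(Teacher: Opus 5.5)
Your proposal is correct, and your main (topological) route is the paper's argument: the paper deduces the corollary directly from the fact, cited from \cite{GM},~\cite{BT1}, that the level sets $\mu^{-1}(\xi)$ over a single maximal chamber are diffeomorphic. Your Ehresmann argument with a $T$-averaged horizontal lift proves this in the $T$-equivariant form needed to pass to the quotients, a step the paper leaves implicit. Your algebraic check via Theorem~\ref{Gn2} is also valid: the sign of each $\sum_{i\in S}\xi_i-1$ with $2\le |S|\le n-2$ is constant on $C_{\omega}$, so the two ideals have the same generators. This is essentially the observation the paper makes next in Theorem~\ref{firstmain}.
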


The walls of a chamber $C_{\omega}$ of maximal dimension are supported by the hyperplanes from the arrangement $\mathcal{A}$. We prove that the cohomology ring of a  symplectic reduction $\mu ^{-1}(\xi)/T$  is completely determined by the supporting hyperplanes  of the chamber $C_{\omega}$ such that $\xi \in C_{\omega}$.

\begin{thm}\label{firstmain}
Let $\xi$ be a regular value of the moment map $\mu$  and $\xi \in C_{\omega}$. Let  $v_{s} = (i_{s_1}, \ldots , i_{s_{j_s}})$,  $2\leq j_2\leq n-2$  are the normal vectors of the supporting hyperplanes of the walls of the chamber $C_{\omega}$ such that $ C_{\omega} : \cap \{x_{i_{s_1}}+\ldots +x_{i_{s_{j_s}}}>1\}$. The cohomology ring  for the symplectic reduction $\mu ^{-1}(\xi)/T$ is determined by the permutation $\nu$ and $\tau$ such that:
\begin{itemize}
\item $\nu (n)\in \{3,\ldots ,n\}$ and $\tau$ is  arbitrary,  
\item $\{\nu (1), \nu (n)\} =  \{1, 2\}$ and $\tau$ is arbitrary,
\item   $\nu (n)\in \{1,2\}$, $\nu (j)\in \{1, 2\}$ for some    $2\leq j\leq n-2$  and  $\tau$ is such that $\{i_{s_1}, \ldots , i_{s_{j_s}}\}\subset \{\tau (j+1), \ldots \tau (n)\}$ for some $s$.
\end{itemize}
\end{thm}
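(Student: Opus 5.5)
The plan is to derive the statement directly from Theorem~\ref{Gn2}. The first two families of relations there are unchanged, so only the third family needs work. Fix $\nu$ with $\nu(n)\in\{1,2\}$ and $\nu(j)\in\{1,2\}$ for some $2\le j\le n-2$. Write $J_\tau=\{\tau(j+1),\ldots,\tau(n)\}$; this set has $n-j$ elements, so $2\le |J_\tau|\le n-2$. It suffices to show, for every $\xi\in C_\omega$ and every $\tau$, that
\[
\sum_{i\in J_\tau}\xi_i>1 \iff \{i_{s_1},\ldots,i_{s_{j_s}}\}\subset J_\tau \text{ for some } s .
\]
Once this holds, the index sets of $(\nu,\tau)$ in the two theorems coincide. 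Hence the ideals generated by the $\partial_{\nu^{-1}}\Delta(x,u_\tau)$ coincide as well.

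For ``$\Leftarrow$'': if $S_s=\{i_{s_1},\ldots,i_{s_{j_s}}\}\subset J_\tau$, then $\sum_{i\in J_\tau}\xi_i\ge\sum_{i\in S_s}\xi_i>1$. This uses the defining inequality of $C_\omega$ and the positivity $0<\xi_i$ of the coordinates of a regular value.

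For ``$\Rightarrow$'', which is the main obstacle, we use the structure of the chamber. The hyperplane $\sum_{i\in J_\tau}\xi_i=1$ belongs to $\mathcal{A}$. Since $C_\omega$ is a chamber of maximal dimension, $C_\omega$ lies entirely on one side of it, namely the side $\sum_{J_\tau}\xi_i>1$ by assumption. We need to show that this inequality is implied by the wall inequalities of $C_\omega$ in a ``monotone'' way, i.e.\ by a single wall $S_s\subset J_\tau$.

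The approach is to use the description of chambers from~\cite{BT1}. A chamber of maximal dimension is the interior of a polytope cut out of $\Delta_{n,2}$ by its facet inequalities. These are of two kinds: the hypersimplex inequalities $0<\xi_i<1$, and wall inequalities $\sum_{S_s}\xi_i>1$ (equivalently, the complementary $\sum_{S_s^c}\xi_i<1$). The set $\mathcal{U}=\{J : \sum_J\xi_i>1 \text{ on } C_\omega\}$ is an up-closed family of subsets. By regularity it is ``self-dual'': exactly one of $J$, $J^c$ lies in it. Take a minimal element $S\subset J_\tau$ of $\mathcal{U}$. I then expect to show that the hyperplane $\sum_S\xi_i=1$ actually supports a wall of $C_\omega$. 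The argument would be by contradiction: if not, then near the hyperplane $\sum_S\xi_i=1$ the chamber would be bounded away from it by other walls $S'$. Decreasing a coordinate in $S$ while moving within the closure of $C_\omega$, one could reach the boundary on some wall $S'\subsetneq S$ or $S'\subset S$. This uses minimality and the fact that each $\xi_i$ can be moved off a face using the coordinate inequalities $\xi_i>0$.

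If the purely combinatorial minimality argument proves delicate, I would instead use Farkas' lemma. The inequality $\sum_{J_\tau}\xi_i>1$ is valid on the closure of $C_\omega$, so it is a nonnegative combination of the facet inequalities. A comparison of coefficients, which are $0/1$ vectors against the affine constraint $\sum\xi_i=2$, then forces one wall vector $S_s$ with $S_s\subset J_\tau$. This step is where the real content lies. The remaining bookkeeping is a verbatim transcription of Theorem~\ref{Gn2}.
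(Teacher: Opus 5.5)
Your reduction to Theorem~\ref{Gn2} is exactly the paper's route. The paper's proof is just the one-line assertion that, for $J_\tau=\{\tau(j+1),\ldots,\tau(n)\}$, one has $\xi(J_\tau)>1$ if and only if $J_\tau$ contains some wall set $S_s$ (here $\xi(S)=\sum_{i\in S}\xi_i$). Your ``$\Leftarrow$'' is correct. The ``$\Rightarrow$'' direction you set out to prove is false in general, because an inclusion-minimal long set need not span a wall; so neither your minimality argument nor your Farkas argument can succeed.

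Take $n=6$ and
\[
\xi=\tfrac{1}{50}(20,20,15,16,16,13)\in\Delta_{6,2}.
\]
No subset of $\{20,20,15,16,16,13\}$ sums to $50$, so $\xi$ is regular. Every pair is short and $\{1,2,3\}$ is long, so $\{1,2,3\}$ is minimal long. However, $\{1,3,4\}$ and $\{2,3,5\}$ are long, $\{3,4,5\}$ is short, and for indicator vectors $\chi_{\{1,2,3\}}=\chi_{\{1,3,4\}}+\chi_{\{2,3,5\}}-\chi_{\{3,4,5\}}$. Hence for every $\xi'$
\[
\xi'(\{1,2,3\})-1=\bigl(\xi'(\{1,3,4\})-1\bigr)+\bigl(\xi'(\{2,3,5\})-1\bigr)+\bigl(1-\xi'(\{3,4,5\})\bigr),
\]
and all three brackets are $\ge 0$ on $\overline{C_\omega}$. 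So $\overline{C_\omega}\cap\{\xi'(\{1,2,3\})=1\}$ lies in three further independent hyperplanes. It therefore has dimension at most $2<4$ and is not a facet. Since no pair is long, no wall set is contained in $\{1,2,3\}$. Thus the pairs with $\nu(6),\nu(3)\in\{1,2\}$ and $\{\tau(4),\tau(5),\tau(6)\}=\{1,2,3\}$ are admitted by Theorem~\ref{Gn2} but excluded by the statement.

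The displayed identity is exactly a Farkas certificate of the kind your ``comparison of coefficients'' was supposed to rule out. It also shows where your minimality heuristic breaks. Decreasing the coordinates of $S$ also decreases $\xi(S')$ for long sets $S'$ that meet $S$ without being contained in it, such as $\{1,3,4\}$ and $\{2,3,5\}$, and those constraints become active first.

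So the index sets in Theorem~\ref{Gn2} and in the statement genuinely differ, and a set-level argument cannot work. What is missing is an ideal-level statement. You would need to show that the omitted generators $\partial_{\nu^{-1}}\Delta(x,u_\tau)$, for $J_\tau$ long but containing no wall, lie in the ideal generated by the retained ones together with $\sigma_i(x)-\sigma_i(u)$ and $\sum u_i$. Neither your plan nor the paper's proof supplies this, and it is not automatic. Compare smooth toric quotients, where the Stanley--Reisner ideal needs every primitive collection, not only those whose primitive relations are extremal, i.e.\ correspond to walls of the ample cone.

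In the small cases used in the paper ($n=4$ and the two $n=5$ chambers), every minimal long set of size between $2$ and $n-2$ is a wall, so there your argument goes through. If ``walls'' were read as all hyperplanes of $\hat{\mathcal A}$ with $C_\omega$ on the side $\xi(S)>1$, the statement would simply restate Theorem~\ref{Gn2}.
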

\begin{proof}
Since $x_1+\ldots +x_n=2$, the vectors $v_s$ go with their complements, so we can always assume that  a  chamber $C_{\omega}$ is  given by the intersection  of halfspaces $x_{i_{s_1}}+\ldots +x_{i_{s_{j_s}}}>1$.  
If  $\nu (n)\in \{1,2\}$, $\nu (j)\in \{1, 2\}$, $2\leq j\leq n-2$, then $\tau$ counts if and only if $\xi _{\tau (j+1)} +\ldots +\xi _{\tau (n)}>1$ , that is if and only if $\{i_{s_1}, \ldots , i_{s_{j_s}}\}\subset \{\tau (j+1), \ldots \tau (n)\}$ for some $ s$.

\end{proof}

The action of the permutation group $S_n$ on $\R ^n$ given by the permutation of coordinates induces the action of $S_n$ on the set of all chambers in $\Delta _{n,2}$.  Then Theorem~\ref{firstmain} implies:
\begin{cor}
If chambers $C_{\omega _1}$ and $C_{\omega _2}$ of maximal dimension $n-1$ belong  to the same  $S_n$ - orbit then 
\[
H^{\ast}(\mu ^{-1}(\xi _1)/T) \cong H^{\ast}(\mu ^{-1}(\xi _2)/T),
\]
for $\xi _1\in C_{\omega _1}$ and $\xi _2\in C_{\omega _2}$.
\end{cor}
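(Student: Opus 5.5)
The plan is to deduce the corollary from Theorem~\ref{firstmain} by transporting all the data along a permutation. Let $\sigma\in S_n$ be such that $\sigma(C_{\omega_1})=C_{\omega_2}$, where $\sigma$ permutes coordinates, $(\sigma\xi)_i=\xi_{\sigma^{-1}(i)}$. By the preceding corollary, the cohomology of the reduction depends only on the chamber. We may therefore take $\xi_2=\sigma\xi_1$, which lies in $C_{\omega_2}$.

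\emph{Step 1: the walls move by relabelling.} The supporting hyperplanes of the walls of $C_{\omega_2}$ are the images under $\sigma$ of those of $C_{\omega_1}$. Concretely, if $C_{\omega_1}=\cap_s\{x_{i_{s_1}}+\ldots+x_{i_{s_{j_s}}}>1\}$, then
\[
C_{\omega_2}=\cap_s\{x_{\sigma(i_{s_1})}+\ldots+x_{\sigma(i_{s_{j_s}})}>1\}.
\]
So the index sets $I_s$ for $C_{\omega_2}$ are $\sigma(I_s)$.

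\emph{Step 2: compare the relation sets.} In Theorem~\ref{firstmain}, the first two families of pairs $(\nu,\tau)$ do not depend on the chamber. The third family is determined by the condition $I_s\subset\{\tau(j+1),\ldots,\tau(n)\}$ for some $s$. Replacing $I_s$ by $\sigma(I_s)$ turns this condition into the same one with $\tau$ replaced by $\sigma\tau$. Hence $(\nu,\tau)$ is admissible for $C_{\omega_1}$ if and only if $(\nu,\sigma\tau)$ is admissible for $C_{\omega_2}$.

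\emph{Step 3: an automorphism of the ambient ring.} Consider the ring automorphism $\phi$ of $\C[\sigma_i(x_1,x_2),\sigma_i(x_3,\ldots,x_n),u_1,\ldots,u_n]$ that fixes each $x_i$ and sends $u_i\mapsto u_{\sigma(i)}$, with the index convention chosen to match $u_\tau$.
\begin{itemize}
\item $\phi$ preserves the relations $\prod(1+u_i)-\prod(1+x_i)$ and $\sum u_i$, since both are symmetric in the $u$'s. So $\phi$ induces an automorphism of $H^*_T(G_{n,2})$.
\item The divided difference operators $\partial_{\nu^{-1}}$ act only on the $x$-variables, so they commute with $\phi$.
\item Therefore $\phi(\partial_{\nu^{-1}}\Delta(x,u_\tau))=\partial_{\nu^{-1}}\Delta(x,u_{\sigma\tau})$, where the precise form ($\sigma\tau$ versus $\tau\sigma$, or $\sigma^{-1}$) is fixed by the convention for $u_\tau$.
\end{itemize}
By Step 2, $\phi$ maps the generating set of the Kirwan kernel for $C_{\omega_1}$ bijectively onto the generating set for $C_{\omega_2}$. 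It thus maps one ideal onto the other, and induces the desired isomorphism of quotient rings. The same conclusion also follows geometrically: the permutation matrix $\sigma\in U(n)$ normalises $T$, maps $\mu^{-1}(\xi_1)$ diffeomorphically onto $\mu^{-1}(\sigma\xi_1)$, and descends to a diffeomorphism of the reductions.

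\emph{Main obstacle.} The only delicate point is the index bookkeeping. One must check that the convention for $u_\tau$ and the action of $\sigma$ on coordinates give exactly the bijection $\tau\leftrightarrow\sigma\tau$ (or $\tau\sigma^{-1}$) in the third family, rather than the inverse. I would settle this by fixing $(\sigma\xi)_i=\xi_{\sigma^{-1}(i)}$ and checking the condition $\xi_{\tau(j+1)}+\ldots+\xi_{\tau(n)}>1$ of Theorem~\ref{Gn2} directly. Whichever convention holds, the map remains a bijection, so the argument goes through.
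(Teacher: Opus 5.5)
Your proof is correct and follows the paper's route: the paper only notes that $S_n$ permutes the chambers and cites Theorem~\ref{firstmain}. Your automorphism $u_i\mapsto u_{\sigma(i)}$ fixes the $x$'s, commutes with the divided differences, and matches $(\nu,\tau)$ with $(\nu,\sigma\tau)$, so it spells out that one-line deduction (and your closing permutation-matrix remark is a valid, stronger independent argument giving a diffeomorphism of the reductions).
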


\subsection{Cohomology of spaces from  the  Hassett category}

The  spaces $F_{\omega} = \mu ^{-1}(\xi)/T^n$ for $\xi \in C_{\omega}$, called the spaces of parameters of the chambers,  are especially emphasized in~\cite{BT1} as one of  the core ingredients in construction of a model for the orbit space $G_{n,2}/T^n$. For $\dim C_{\omega}=n-1$, the  spaces $F_{\omega}$ are smooth manifolds with a symplectic structure obtained by sympelctic reduction.  Such  $F_{\omega}$ are all    isomorphic to  Hassett spaces $\mathcal{M}_{0, \mathcal{A}}$  of weighted stable genus zero curves for an  appropriate $\mathcal{A}$' s, see~\cite{H},~\cite{BT}. In~\cite{BT} we introduced the Hassett category $\stackrel{\circ}{\mathcal{H}}_{0,n}$  whose objects  are  those Hassett  spaces 
$\mathcal{M}_{0, \mathcal{A}}$, which are isomorphic to the spaces of parameters $F_{\omega}$ together with the initial object $\mathcal{M}_{0, \mathcal{A}_{0}}\cong \overline{\mathcal{M}}_{0, n}$, where $\mathcal{A}_{0}=(1, \ldots, 1)$. The morphisms in this category are given by the reduction morphisms $\rho _{\mathcal{A}_{0}, \mathcal{A}}: \mathcal{M}_{0, \mathcal{A}_{0}} \to \mathcal{M}_{0, \mathcal{A}}$. This category is distinguished by the property that  it can be topologically modeled on $G_{n,2}/T^n$, see~\cite{BT}.

We recall~\cite{H},~\cite{BT} that the appropriate weights $\mathcal{A}$ are  given as follows. Let $\mathcal{D}_{0, n} = \{(a_1, \ldots , a_n) | 0<a_j\leq 1, \; a_1+\ldots +a_n>2\}$ be the domain of weights. The coarse chamber decomposition of $\mathcal{D}_{0, n}$ is defined by the hyperplane arrangement $\mathcal{W}_{c} = \{\sum_{j\in S}x_j=1 | S\subset \{1, \ldots , n\}, 2<|S|<n-2\}$ and the corresponding chambers we denote by $w_{c}$.  The boundary of $\mathcal{D}_{0, n}$ is defined by $\partial \mathcal{D}_{0, n} = \{(a_1, \ldots , a_n) | a_1+\ldots +a_n=2, \; 0<a_i<1, \; i=1, \ldots , n\}$. A weight $\mathcal{A}$ is appropriate if $\mathcal{A}$ belongs to a chamber $w_{c}\subset \mathcal{D}_{0, n}$ defined by $\mathcal{W}_{c}$ such that $\dim w_{c}\cap \partial \mathcal{D}_{0, n} = n-1$, that is of maximal dimension. 

In this way we prove one of our key results:

\begin{thm}\label{WeHassett}
The cohomology ring of a space $\mathcal{M}_{0, \mathcal{A}}$, $\mathcal{A}\neq \mathcal{A}_{0}$  from the  Hassett category $\stackrel{\circ}{\mathcal{H}}_{0}$  is given by
\begin{equation}\label{HAS}
H^{\ast}(\mathcal{M}_{0, \mathcal{A}}) = \frac{\C[\sigma _{i}(x_1, x_2), \sigma _{i}(x_{3}, \ldots , x_n), u_1, \ldots , u_n]}{\langle\sigma _{i}(x_1, \ldots, x_n) - \sigma _{i}(u_1, \ldots , u_n), \sum \limits_{i=1}^{n}u_i, \partial _{\nu ^{-1}}\Delta (x, u_{\tau})\rangle},
\end{equation}
for all $\nu$ and $\tau$ such that: 
\begin{itemize}
\item $\nu (n)\in \{3,\ldots ,n\}$ and $\tau$ is arbitrary,  
\item $\{\nu (1), \nu (n)\} =  \{1, 2\}$ and $\tau$ is arbitrary,
\item   $\nu (n)\in \{1,2\}$, $\nu (j)\in \{1, 2\}$ for  some    $2\leq j\leq n-2$  and  $\tau$ is such that $\{i_{s_1}, \ldots , i_{s_{j_s}}\}\subset \{\tau (j+1), \ldots \tau (n)\}$ for some $s$, 
\end{itemize}
 where  $\{i_{s_1}, \ldots , i_{s_{j_s}}\}$ are the normal vectors of the supporting hyperplanes of the walls of the chamber  $C_{\omega}\subset \Delta _{n,2}$     such that   $C_{\omega}\cap w_{c} \neq \emptyset$ for the chamber $w_c\subset \mathcal{D}_{0, n}$ such that $\mathcal{A}\in w_c$.
\end{thm}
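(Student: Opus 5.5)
The plan is to reduce the statement to Theorem~\ref{firstmain}. We identify $\mathcal{M}_{0,\mathcal{A}}$ with a space of parameters $F_{\omega}=\mu^{-1}(\xi)/T$ for a suitable regular value $\xi$, and then read off the ring from Theorem~\ref{GoldinGR} in the form already specialised in Theorems~\ref{Gn2} and~\ref{firstmain}.

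\emph{Step 1: choose the chamber and the regular value.} Let $\mathcal{A}\neq\mathcal{A}_0$ be appropriate, so $\mathcal{A}\in w_c$ with $\dim w_c\cap\partial\mathcal{D}_{0,n}=n-1$. The hyperplanes of $\mathcal{W}_c$ restricted to the hyperplane $a_1+\dots+a_n=2$ are exactly the hyperplanes of $\hat{\mathcal{A}}$. Hence $w_c\cap\partial\mathcal{D}_{0,n}$ is a union of chambers of $\Delta_{n,2}$, and it contains an open $(n-1)$-dimensional chamber $C_\omega$ with $C_\omega\cap w_c\neq\emptyset$ (closure taken in $\partial\mathcal{D}_{0,n}$). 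We pick $\xi\in C_\omega$. By~\cite{BT1}, $\xi$ is a regular value of $\mu$, since it lies in a chamber of maximal dimension.

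\emph{Step 2: identify the spaces.} By~\cite{H},~\cite{BT}, the Hassett space depends only on the chamber $w_c$, so $\mathcal{M}_{0,\mathcal{A}}\cong\mathcal{M}_{0,\mathcal{A}'}$ for any weight $\mathcal{A}'\in w_c$ close to $\xi$. Moreover, $\mathcal{M}_{0,\mathcal{A}'}\cong F_\omega=\mu^{-1}(\xi)/T$ by the isomorphism of Hassett spaces with spaces of parameters of maximal chambers. The point needing care is independence of the choice of $C_\omega$ if $w_c$ meets several maximal chambers. The corollary following Theorem~\ref{Gn2} handles independence of $\xi$ within a single chamber. For different chambers inside the same $w_c$, we again invoke the fact that the Hassett space is constant on $w_c$, so all such $F_\omega$ are isomorphic. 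This step is the main obstacle: the whole argument rests on the precise compatibility of the Hassett chamber structure with the chamber decomposition of $\Delta_{n,2}$ proved in~\cite{BT}. I would cite that compatibility rather than reprove it.

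\emph{Step 3: compute the ring.} By the Proposition and Corollaries of this section, the Kirilov-Konstant-Souriau moment map for $\lambda=(1,1,0,\dots,0)$ is the standard $\mu$. By the Remark on normalisation, Theorem~\ref{GoldinGR} applies to $G_{n,2}(\lambda)/\!/T(\xi)$ with $k=2$. The ring $H^{\ast}_T(G_{n,2})$ supplies the generators $\sigma_i(x_1,x_2)$, $\sigma_i(x_3,\dots,x_n)$, $u_1,\dots,u_n$ together with the relations $\sigma_i(x)-\sigma_i(u)$ and $\sum u_i$. The remaining relations $\partial_{\nu^{-1}}\Delta(x,u_\tau)$ run over the pairs $(\nu,\tau)$ listed in Theorem~\ref{Gn2}, which Theorem~\ref{firstmain} rewrites in terms of the wall normals $\{i_{s_1},\dots,i_{s_{j_s}}\}$ of $C_\omega$. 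Note that $\xi_{\tau(j+1)}+\dots+\xi_{\tau(n)}>1$ holds on $C_\omega$ exactly when $\{\tau(j+1),\dots,\tau(n)\}$ contains the support of some wall inequality. Substituting these into the presentation gives~\eqref{HAS}.
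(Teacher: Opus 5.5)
Your proposal is correct and is essentially the paper's own argument. The paper gives no separate proof: it combines the identification $\mathcal{M}_{0,\mathcal{A}}\cong F_{\omega}=\mu^{-1}(\xi)/T^n$ for $\xi\in C_{\omega}$ (from \cite{H}, \cite{BT}) with Theorem~\ref{firstmain}, which is Goldin's Theorem~\ref{GoldinGR} for $\lambda=(1,1,0,\ldots,0)$ as specialised in Theorem~\ref{Gn2}.

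Your Step~1 claim that $\mathcal{W}_c$ restricts on $\sum a_i=2$ exactly to $\hat{\mathcal{A}}$ is too strong: for $n=4$ the set $\mathcal{W}_c$ is empty while $\hat{\mathcal{A}}$ is not. Your Step~2, which handles several maximal chambers abutting one $w_c$ via constancy of Hassett spaces on coarse chambers, covers this case and makes explicit a point the paper leaves implicit.
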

\begin{rem}
The right hand sides  of the formulas in Theorem~\ref{GoldinGR} and Theorem~\ref{WeHassett} are the same, but the principal advantage of Theorem~\ref{WeHassett}  is in an  explicit description of the permutation $\nu$ and $\tau$,  which gives an   effective way for  computation of a cohomology ring. We demonstrate it below for $n=4$.   In addition, by applying Theorem~\ref{WeHassett} in the case    $n=5$ we provide explicit  description of cohomology rings for the spaces $\overline{\mathcal{M}}_{0, 5}$ of Deligne-Mumford  and $\bar{L}_{0, 5}$ of Losev-Manin.
\end{rem} 

\subsection{Computation for $n=4$}

We demonstrate effectiveness of Theorem~\ref{WeHassett}  for $n=4$, compare with the proof of Theorem 5.2 from~\cite{G}. In this  case it is known that a symplectic reduction does not depend on the   choice of a regular value in $\Delta _{4,2}$,  and it is 
homeomorphic to $\overline{\mathcal{M}}_{0, 4} \cong \C P^1$.  There are $8$ chambers  in $\Delta _{4,2}$ of dimension $3$ and the set of chambers is  divided into $2$ orbits by $S_4$-action. The  representatives of these orbits   can be taken to be   $C_1: x_1+x_j >1$, $2\leq j\leq 4$ and $C_2: x_1+x_2>1, x_1+x_3>1$, $x_1+x_4<1$. Nevertheless, the cohomology rings of  the symplectic reductions determined by these two orbits are the same. 

Then   from Theorem~\ref{Gn2} it directly follows

\begin{cor}
For any regular value  $\xi \in \Delta _{4,2}$ of the standard moment map $\mu : G_{4,2}\to \Delta _{4,2}$ it holds
\begin{equation}\label{DM42}
H^{\ast}(\mu ^{-1}(\xi)/T^4) \cong \frac{\C[\sigma_{i}(x_1, x_2), \sigma _{i}(x_3, x_4), u_1, \ldots , u_4]}{\langle \sigma _{i}(x_1, \ldots, x_4)-\sigma _{i}(u_1, \ldots , u_4), \sum _{i=1}^4u_i,  \partial _{\nu ^{-1}}\Delta (x, \tau) \rangle }
\end{equation}
for $\nu, \tau \in S_{4}$ such that
\begin{itemize}
\item $\nu (4)\in \{3,4\}$ and $\tau$ is  arbitrary,
\item $\{\nu (1),  \nu (4)\} =  \{1,2\}$ and $\tau$ is  arbitrary,
\item $\{\nu (2), \nu (4)\} =  \{1,2\}$ and $\tau$ is  such that $i\in \{\tau (3), \tau (4)\}$  for some fixed $1\leq i\leq 4$.
\end{itemize}
\end{cor}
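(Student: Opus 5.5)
Since the corollary is the case $n=4$ of Theorem~\ref{Gn2}, the plan is to specialize that theorem directly. For $n=4$ the relations are $\partial_{\nu^{-1}}\Delta(x,u_\tau)$ for three families of pairs $(\nu,\tau)$. The first two families of Theorem~\ref{Gn2} become the first two bullets of the corollary: $\nu(4)\in\{3,4\}$ with $\tau$ arbitrary, and $\{\nu(1),\nu(4)\}=\{1,2\}$ with $\tau$ arbitrary. For the third family, the index $j$ must satisfy $2\le j\le n-2=2$, so $j=2$. The condition therefore reads $\{\nu(2),\nu(4)\}=\{1,2\}$ together with $\xi_{\tau(3)}+\xi_{\tau(4)}>1$.

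The only real step is to rewrite the inequality $\xi_{\tau(3)}+\xi_{\tau(4)}>1$ in the combinatorial form ``$i\in\{\tau(3),\tau(4)\}$ for a fixed $i$''. By the results recalled above, the ring depends only on the $S_4$-orbit of the chamber containing $\xi$, and there are two orbits, represented by $C_1$ and $C_2$. In each case I will determine exactly which $2$-subsets $\{a,b\}$ satisfy $\xi_a+\xi_b>1$. I will use $\xi_1+\dots+\xi_4=2$ and regularity, which gives $\xi_a+\xi_b\neq 1$, so that exactly one of each complementary pair of $2$-subsets has sum $>1$.

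\begin{itemize}
\item For $C_1$ the subsets with sum $>1$ are exactly $\{1,2\},\{1,3\},\{1,4\}$. So the condition is $1\in\{\tau(3),\tau(4)\}$, and we take $i=1$.
\item For $C_2$ the subsets with sum $>1$ are $\{1,2\},\{1,3\}$, and, as the complement of $\{1,4\}$, also $\{2,3\}$. These three pairs do not share a common element, so the chamber $C_2$ does not literally fit the ``fixed $i$'' form.
\end{itemize}

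The case of $C_2$ is where I expect the main obstacle. There are two possible ways around it.
\begin{enumerate}
\item One can argue that, since every regular value gives $\mu^{-1}(\xi)/T^4\cong\C P^1$ (as stated in the text), the cohomology rings for $C_1$ and $C_2$ coincide. So it suffices to present the ring through the chamber $C_1$, or any chamber in its orbit, which gives the stated form with some fixed $i$.
\item Alternatively, one can check via Theorem~\ref{firstmain} that the extra generators indexed by $C_2$ lie in the ideal already generated by the first two families together with those for $i=1$. This would require an explicit divided-difference computation with $j=2$, and I would attempt it only if the first route is considered insufficient.
\end{enumerate}

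I will take the first route, noting that the statement is intended up to isomorphism of rings, valid for any regular $\xi$.
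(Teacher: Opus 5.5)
Your proposal is correct and is essentially the paper's argument: the paper obtains the corollary by specializing Theorem~\ref{Gn2} to $n=4$, where the third family forces $j=2$ and the condition $\xi_{\tau(3)}+\xi_{\tau(4)}>1$ becomes $i\in\{\tau(3),\tau(4)\}$ for a chamber in the orbit of $C_1$. Your observation that for the orbit of $C_2$ the condition is instead $4\notin\{\tau(3),\tau(4)\}$, so that the stated form holds only up to ring isomorphism, is a point the paper passes over. The paper covers it only by its preceding remark that every regular reduction is $\C P^1$, so the rings for the two orbits agree, which is exactly your route~1.
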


\begin{cor}
The cohomology ring of $\mu ^{-1}(\xi)/T^4\subset G_{4,2}/T^4$ is given by
\[
H^{\ast}(\mu ^{-1}(\xi)/T^4) \cong \C [x] / \langle x^2\rangle.
\]
\end{cor}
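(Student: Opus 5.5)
Starting from the presentation \eqref{DM42}, I will cut the ring down by explicit elimination until $\C[x]/\langle x^2\rangle$ remains. Since $\mu^{-1}(\xi)/T^4$ has real dimension $8-2-3\cdot 2=2$, the target has nonzero cohomology only in degrees $0$ and $2$. Thus it suffices to show two things: the quotient ring is generated by a single degree-$2$ class $x$ with $x\neq 0$, and everything in degree $\geq 4$ vanishes.

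\textbf{Step 1: reduce the equivariant ring.} Using $\sum u_i=0$ and $\sigma_i(x_1,\dots,x_4)=\sigma_i(u_1,\dots,u_4)$, I can eliminate $x_3+x_4$, $x_3x_4$ and one $u_i$. The ring $H^{\ast}_T(G_{4,2})$ then becomes a free module over $\C[u_1,u_2,u_3]$ on the basis $1, e_1, e_1^2, e_2, e_1e_2, e_2^2$, where $e_1=x_1+x_2$ and $e_2=x_1x_2$. Concretely,
\[
\C[e_1,e_2,u]/\langle h_3(e,u),\,h_4(e,u)\rangle,
\]
where $h_3$ and $h_4$ are the relations expressing that $(1+e_1+e_2)^{-1}\prod(1+u_i)$ has no terms of degree $3$ and $4$. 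These are the standard Grassmannian relations twisted by the $u$'s.

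\textbf{Step 2: compute the Kirwan kernel generators.} Next I will compute the relations $\partial_{\nu^{-1}}\Delta(x,u_\tau)$, with $\Delta=\prod_{i<j}(x_i-u_j)$, for the three families of $\nu$ in the preceding corollary.

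In the first family, $\nu(4)\in\{3,4\}$, the length of $\nu^{-1}$ is small. So these elements are high-degree Schubert-type classes, of degree $6-\ell(\nu)$ in the $x$'s. I will show they generate the classes of fixed points $\Lambda_J$ with $J\not\ni$ a given index, localized appropriately. In particular, because $\tau$ is arbitrary, their $u$-linear parts should force $u_i$ to be expressible through $e_1$.

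The second family, $\{\nu(1),\nu(4)\}=\{1,2\}$ with $\tau$ arbitrary, should likewise kill the remaining degree-$\geq 4$ classes symmetric in all $u_\tau$.

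The third family, depending on a fixed index $i\in\{\tau(3),\tau(4)\}$, gives only degree-$2$ relations. Since $\ell(\nu^{-1})=4$ there, $\partial_{\nu^{-1}}\Delta$ has degree $2$. I expect these to identify $u_1,\dots,u_4$ and $e_1$ up to scalars with a single class, leaving exactly one free degree-$2$ generator $x$.

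\textbf{Step 3: conclude.} I will check that $x^2$ lies in the ideal, for instance as one of the degree-$4$ generators from the first two families. I will also check that $x$ itself does not lie in it. For the latter, I will evaluate at a point, or, more cleanly, compare with $\overline{\mathcal{M}}_{0,4}\cong\C P^1$: its $H^2$ is nonzero, so by Kirwan surjectivity (Theorem~\ref{tri}) the degree-$2$ part of the quotient is one-dimensional. Combined with the dimension count, this gives $\C[x]/\langle x^2\rangle$.

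\textbf{Main obstacle.} The bulk of the work is the explicit evaluation of the divided differences $\partial_{\nu^{-1}}\Delta(x,u_\tau)$ over many $(\nu,\tau)$ and the bookkeeping of which of them are redundant. To keep this manageable, I would exploit that the answer is independent of the chamber and of $i$. I would therefore fix $i=1$ and use $S_4$-symmetry in $\tau$, so that only representatives up to symmetry need to be computed. Alternatively, I would bypass most of the computation by using the known topology ($\C P^1$) to fix the Hilbert series $1+t^2$, and only verify that a single degree-$2$ class survives.
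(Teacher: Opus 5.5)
Your overall strategy, eliminating variables in \eqref{DM42} and reading off what survives, is the paper's. But Step 2, which carries all the content, attributes the relations to the wrong permutations, and carried out as written it would not produce the degree-$2$ relations you need.

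The class $\partial_{\nu^{-1}}\Delta(x,u_\tau)$ has polynomial degree $6-\ell(\nu)$, i.e.\ cohomological degree $12-2\ell(\nu)$, so a linear relation requires $\ell(\nu)=5$.
In the first family, $\ell(\nu)\le 3$ when $\nu(4)=4$ and $\ell(\nu)\le 4$ when $\nu(4)=3$. The second family also has $\ell(\nu)\le 4$. So neither contributes anything in degree $2$, and there are no ``$u$-linear parts'' expressing the $u_i$ through $e_1$.
The third family is $\{3142,3241,4132,4231\}$, with lengths $3,4,4,5$. Its length-$4$ members give relations of cohomological degree $4$, so they cannot identify the degree-$2$ classes; you conflated polynomial degree $2$ with cohomological degree $2$.

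The only source of linear relations is $\nu=4231=s_1s_2s_3s_2s_1=\nu^{-1}$, with $\partial_{\nu^{-1}}\Delta(x,u_\tau)=x_1+x_2-u_{\tau(3)}-u_{\tau(4)}$. For the chamber $C_1$ the constraint $1\in\{\tau(3),\tau(4)\}$ gives $x_1+x_2=u_1+u_i$ for $i=2,3,4$. These are three independent relations on the $4$-dimensional space spanned by $x_1+x_2,u_1,u_2,u_3$. They force $u_2=u_3=u_4$, $u_1=-3u_2$ and $x_1+x_2=-2u_2$. This is the key step of the paper's proof, and your plan does not contain it.

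Once you have this step, the rest goes through. No permutation of length $6$ occurs, since $w_0=4321$ is in none of the three families. So the ideal has nothing in degree $0$, its degree-$2$ part is exactly the span of these three forms, and $u_2\neq 0$ survives; no evaluation at a point is needed.

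In degree $4$ the quotient by the linear relations is spanned by $u_2^2$ and $x_1x_2$, so you need two relations, not one. The paper uses:
\begin{itemize}
\item $(x_1-u_{\tau(4)})(x_2-u_{\tau(4)})$, which gives $x_1x_2=-3u_2^2$;
\item $x_1^2+x_1x_2+x_2^2-(x_1+x_2)(u_2+u_3+u_4)+u_2u_3+u_2u_4+u_3u_4$, which gives $16u_2^2=0$.
\end{itemize}
Alternatively, granting Goldin's isomorphism, degrees $\ge 4$ vanish for dimension reasons. Note the count is $8-2\cdot 3=2$, not $8-2-3\cdot 2$.

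Your fallback via $\overline{\mathcal{M}}_{0,4}\cong\C P^1$ is logically sufficient, but it makes Steps 1--2 pointless: it simply quotes the answer the corollary is meant to recover from \eqref{DM42}. Also, Kirwan surjectivity alone does not make the degree-$2$ part one-dimensional. That requires identifying the quotient ring with $H^*(\mu^{-1}(\xi)/T^4)$, i.e.\ Goldin's description of the kernel.
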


\begin{proof}
 We take the chamber $C_1$ and  a regular value $\xi \in C_1$.  It follows from~\eqref{DM42} that $x_3+x_4 = -(x_1+x_2)$,  and $\sigma _{2}(x_1,\ldots x_4) = \sigma _{2}(u_1, \ldots u_4)$ implies that $ \sigma _{2}(x_3, x_4) =  - \sigma_{1}^{2}(x_1, x_2)  + \sigma _{2}(u_1, \ldots, u_4)$. We have four $2$-degree generators $x_1+x_2, u_1, u_2, u_3$ and one $4$-degree $\sigma _{2}(x_1, x_2)$.  

We consider the relations given by  $\partial  _{\nu ^{-1}}\Delta (x, u_{\tau})$ in  degree $2$. The determinant polynomial $\Delta (x, u_{\tau})$ is of degree  $6$, while the    degree of the polynomial  $\partial  _{\nu^{-1}}\Delta (x, u_{\tau})$ is $6-l$, where $\nu = s_{i_1}\cdots s_{i_l}$ is  a reduced expression, such that $l$ is minimal. Thus,   $\partial  _{\nu ^{-1}}\Delta (x, u_{\tau})$  gives a cohomology element  of degree $12 -2l$, so we consider $l= 4, 5$.

Using Lemma~\ref{coh52gen}, we find all such permutations $\nu , \tau  \in S_4$.  If
\begin{itemize}
\item  $\nu (4) = 4$ then  $l(\nu ) \leq 3$ and $\tau$ is arbitrary, one takes $k=3$,
\item   $\nu (4) = 3$ then $\nu (l)\leq 1+3 =4$ and $\tau$ is arbitrary, $k=3$ ,
\item  $\nu(4) = 1$ then $\nu (3)\neq 2$,  or $\nu (4)=2$ then $\nu (3)\neq 1$, otherwise $\lambda _1 > \xi _i$ and $\lambda _1+\lambda _2 = 2 > \xi _i +\xi _ j$ for all $i, j$,
\begin{itemize}
\item $\nu(4) = 1$,  $\nu (1)=2$  then $l(\nu) \leq 3 +1=4$ and  $\tau$ is arbitrary, that is $\{ \tau (3), \tau (4)\} \subset \{2,3,4\}$ and  $k=2$ or $1\in \{\tau (3), \tau (4)\}$ and  $k=3$,
\item  $\nu (4)=2$, $\nu (1)=1$ then  $l(\nu) \leq 3$,
\item $\nu (4) =1$ and $\nu (2)=2$ then $l(\nu) \leq 5$  and $1\in \{\tau (3), \tau (4)\}$.   The permutation $\nu$ of length $5$  is  $\nu = 4231 = s_1s_2s_3s_2s_1$. 
\item $\nu (4)=2$ and $\nu (2) =1$ then $l(\nu) \leq 2 + 1=3$.  
\end{itemize}
\end{itemize}

For   $\nu = 4231 =s_1s_2s_3s_2s_1$,  it follows $\nu ^{-1} =s_1s_2s_3s_2s_1 = \nu$, so we obtain   
\[
P = \partial_{321}\Delta (x, u_{\tau}) = (x_1-u_{\tau(3)})(x_1-u_{\tau (4)}(x_2-u_{\tau (4)}).
\]
Therefore,
\[
\partial _{2}P = \frac{1}{x_2-x_3}( P - (x_1-u_{\tau (3)})(x_1-u_{\tau (4)})(x_3-u_{\tau (4)})= (x_1-u_{\tau(3)})(x_1-u_{\tau (4)}).
\]
Finally, we have   
\[
\partial \Delta _{\nu ^{-1}}(x, u) = x_1+x_2 -u_{\tau (3)}-u_{\tau (4)}.
\]
Since $1\in \{\tau(3), \tau (4)$ for this $\nu $ we deduce that $u_1+u_i=x_1+x_2$ for $2\leq i \leq 3$, that is $u_2=u_3=u_4$ and $u_1=-3u_2$ and $x_1+x_2= - 2u_2$.   We are left to express $x_1x_2$.

The permutations of length $4$ are $ 3241= s_1s_2s_1s_3,  4132 = s_2s_3s_2s_1,   4213 = s_1s_3s_2s_1$, and their inverse are $\nu _1 = s_3s_1s_2s_1, \nu _2= s_1s_2s_3s_2, \nu _3= s_1s_2s_3s_1$.  For $\nu _1$ from the second case above,  $\tau$ is arbitrary and 
\[
\partial _{\nu _1}\Delta (x, u) = (x_1-u_4)(x_2-u_4),\]
which induces 
\[
x_1x_2 = (x_1+x_2)u_4 -u_4^2 = - 3u_2^2.
\]

Further
\[
\partial _{\nu _2}\Delta (x, u) =   x_1^2+x_1x_2+x_2^2 -(x_1+x_2)(u_2+u_3+u_4) + u_2u_3 + u_ 2u_4+u_3u_4,
\]
so we obtain
\[
7u_2^2 +6u_2^2 +3u_2^2 =0 \rightarrow u_2^2=0.
\]

Note that $\nu _3$ in fact does not give any relation since $\partial _{\nu _3}\Delta (x, u)$ is not symmetric in $x_1, x_2$.



\end{proof}

\section{Cohomology of  spaces from Hassett category $\stackrel{\circ}{\mathcal{H}}_{0, 5}$}

The cohomology ring of symplectic reductions $\mu ^{-1}(\xi)/T =F_{\omega}$  are isomorphic for regular values  $\xi$ which belong to the same $S_n$ - orbit by $S_n$ action on the set of maximal chambers in $\Delta _{n,2}$ .
Applying above results we compute the cohomology rings of  two important  Hassett spaces,  the  well known moduli space 
$\overline{\mathcal{M}}_{0, 5}$ of Deligne-Mumford and $\bar{L}_{0, 5}$ of Losev -Manin, which are symplectic reductions. The chambers in $\Delta _{5,2}$, which correspond to  these spaces,  belong to different $S_5$ orbits and, moreover,  the cohomology rings of these spaces are not isomorphic.

For the regular value $\xi = (\frac{2}{5}, \ldots , \frac{2}{5}) \in \Delta _{5,2}$, that is for the chamber in $\Delta _{5,2}$ given by $C_{\omega} = \cap_{1\leq i<j\leq 5}\{  x_i+x_j<1\} =\cap _{1\leq i<j<k\leq 5}\{x_i+x_j+x_k>1\}$,  the symplectic reduction   $\mu ^{-1}(\xi)/T^5$  is isomorphic to the universal space of parameters $\mathcal{F}_{5}$, which is isomorphic to the Deligne-Mumford compactification $\overline{\mathcal{M}}_{0, 5}$.

\begin{lem}\label{coh52gen}
Let $\xi = (\frac{2}{5}, \ldots , \frac{2}{5}) \in \Delta _{5,2}$. Then 
\begin{equation}\label{DM52}
H^{\ast}(\overline{\mathcal{M}}_{0, 5}) \cong \frac{\C [\sigma _{i}(x_1,  x_2), \sigma _{i}(x_3, x_4, x_5), u_1, \ldots, u_5]}{\langle \sigma _{i}(x_1, \ldots, x_5) - \sigma _{i}(u_1, \ldots , u_5), \sum _{i-1}^{5}u_i, \partial  _{\nu ^{-1}}\Delta (x, u_{\tau})\rangle},
\end{equation}
for $\nu, \tau \in S_{5}$ such that
\begin{itemize}
\item $\nu (5)\in \{3,4,5\}$ and $\tau$ is  arbitrary
\item $\{\nu (1),  \nu (5)\} =  \{1,2\}$ and $\tau$ is arbitrary,
\item $\{\nu (2), \nu (5)\} =  \{1,2\}$ and $\tau$ is arbitrary.
\end{itemize}
\end{lem}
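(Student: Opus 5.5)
The plan is to specialize Theorem~\ref{Gn2} (equivalently Theorem~\ref{firstmain}) to $n=5$ and $\xi=(\frac25,\ldots,\frac25)$, and then to use the identification of $\mu^{-1}(\xi)/T^5$ with $\mathcal{F}_5\cong\overline{\mathcal{M}}_{0,5}$ from~\cite{BT1},~\cite{BT4}. The first step is to check that $\xi$ is a regular value. Every partial sum $\xi_{j_1}+\ldots+\xi_{j_l}$ equals $\frac{2l}{5}$, and this is never $1$. By the criterion recalled in the proof of Theorem~\ref{Gn2}, $\xi$ therefore lies in a chamber of maximal dimension $4$, namely $C_\omega=\cap_{i<j}\{x_i+x_j<1\}=\cap_{i<j<k}\{x_i+x_j+x_k>1\}$. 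Theorem~\ref{GoldinGR} with $k=2$ then presents $H^\ast(\mu^{-1}(\xi)/T^5)$ as the displayed quotient of $\C[\sigma_i(x_1,x_2),\sigma_i(x_3,x_4,x_5),u_1,\ldots,u_5]$, subject to the relations $\partial_{\nu^{-1}}\Delta(x,u_\tau)$ for the admissible pairs $(\nu,\tau)$. The remaining work is to identify those pairs explicitly.

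The first two families come directly from Theorem~\ref{Gn2}. If $\nu(5)\in\{3,4,5\}$, take $k=4$: then $\lambda_{\nu(5)}=0<\xi_{\tau(5)}$ for every $\tau$. If $\{\nu(1),\nu(5)\}=\{1,2\}$, take $k=1$: then $1<\frac85$, again for every $\tau$. The third family of Theorem~\ref{Gn2} consists of those $\nu$ with $\nu(5)\in\{1,2\}$ and $\nu(j)\in\{1,2\}$ for some $2\le j\le 3$, and it requires $\xi_{\tau(j+1)}+\ldots+\xi_{\tau(5)}>1$. At the barycenter this sum is $\frac{2(5-j)}{5}$. For $j=2$ it equals $\frac65>1$, so every $\tau$ is allowed, and this gives exactly the third bullet $\{\nu(2),\nu(5)\}=\{1,2\}$ with $\tau$ arbitrary. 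For $j=3$ it equals $\frac45<1$, so no $\tau$ qualifies and the case $\{\nu(3),\nu(5)\}=\{1,2\}$ contributes no relations. In the language of Theorem~\ref{firstmain}, the walls of $C_\omega$ have normals given by triples, and a triple lies inside $\{\tau(j+1),\ldots,\tau(5)\}$ only when $5-j\ge 3$, which happens only for $j=2$. I would check that both formulations give the same answer, since that consistency is a useful sanity check.

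The final step is to justify that the case analysis is exhaustive and that no admissible $k$ has been missed. Given $\nu$, the condition in Theorem~\ref{GoldinGR} asks for the existence of some $k$ with $\sum_{i>k}\lambda_{\nu(i)}<\sum_{i>k}\xi_{\tau(i)}=\frac{2(5-k)}{5}$. Since $\lambda$ contains exactly two entries equal to $1$, the left side takes only the values $0$, $1$ or $2$. The value $2$ never works, because the right side is less than $2$ for $k\ge1$. The value $0$ occurs for some $k$ exactly when $\nu(5)\notin\{1,2\}$. The value $1$ occurs when $\nu(5)\in\{1,2\}$ and $k\ge$ the position of the other element of $\{1,2\}$; it then requires $\frac{2(5-k)}{5}>1$, that is $k\le2$. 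Hence the other index must sit at position $1$ or $2$, which recovers exactly the three bullets. I expect the main obstacle to be the bookkeeping of this case split, in particular handling the case where $\nu(5)\in\{1,2\}$ but the other index sits at position $3$ or $4$, and confirming that it contributes nothing. I would also justify, via~\cite{BT4}, that the symplectic reduction at this $\xi$ is indeed $\overline{\mathcal{M}}_{0,5}$; I would take that identification from the cited results rather than reprove it.
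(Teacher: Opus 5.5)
Your proposal is correct and takes the same route as the paper. The paper gives no separate proof of this lemma; it is just Theorem~\ref{Gn2}/Theorem~\ref{firstmain} specialized to $n=5$ and the barycentric chamber, where the third family survives only for $j=2$ because $\xi_{\tau(3)}+\xi_{\tau(4)}+\xi_{\tau(5)}=\frac65>1$ while $\xi_{\tau(4)}+\xi_{\tau(5)}=\frac45<1$ (equivalently, all wall normals of $C_\omega$ are triples). Your exhaustiveness check via Goldin's inequality reproduces, for this $\xi$, the case analysis in the proof of Theorem~\ref{Gn2}, and you take the identification with $\overline{\mathcal{M}}_{0,5}$ from~\cite{BT4} exactly as the paper does.
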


\begin{cor}\label{GDM}
The cohomology ring of $\mu ^{-1}(\xi)/T^5$ is given by
\begin{equation}\label{GDMR}
H^{\ast}(\overline{\mathcal{M}}_{0, 5}) \cong \frac{\C[x, u_1, \ldots , u_5]}{\langle \sum _{i=1}^{5}u_i, u_iu_j - x^2, u_l^2+4x^2 , u_ix \rangle}.
\end{equation}
\end{cor}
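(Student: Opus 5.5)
The plan is to start from the presentation \eqref{DM52} of Lemma~\ref{coh52gen} and reduce it in degrees $2$ and $4$ to the stated form, in the same way as the computation for $n=4$. The target ring has a $\C$-basis $1$, five independent degree-$2$ classes, and one degree-$4$ class, which matches the Betti numbers $1,5,1$ of $\overline{\mathcal{M}}_{0,5}$. First I eliminate the redundant generators with the relations $\sigma_i(x_1,\ldots,x_5)=\sigma_i(u_1,\ldots,u_5)$ and $\sum u_i=0$:
\begin{itemize}
\item $\sigma_1(x_3,x_4,x_5)=-\sigma_1(x_1,x_2)$;
\item $\sigma_2(x_3,x_4,x_5)$ and $\sigma_3(x_3,x_4,x_5)$ become polynomials in $\sigma_1(x_1,x_2)$, $\sigma_2(x_1,x_2)$ and the $\sigma_i(u)$.
\end{itemize}
This leaves the degree-$2$ generators $y=x_1+x_2,u_1,\ldots,u_4$ and one degree-$4$ generator $p=x_1x_2$. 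The remaining relations all come from the classes $\partial_{\nu^{-1}}\Delta(x,u_\tau)$. Since $\deg\Delta=10$ in $x$, such a class lies in cohomological degree $2(10-l(\nu))$. The reduced space has complex dimension $2$, so only $l(\nu)\ge 6$ gives new information: $l=9$ gives linear relations, $l=8$ quadratic, $l=7$ cubic, and $l=6$ only confirms that degree $8$ vanishes.

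\textbf{Step 1 (linear relations).} Take the longest-type $\nu$ with $l(\nu)=9$ allowed by the third bullet, namely $\nu(2)$ and $\nu(5)$ in $\{1,2\}$. I expect $\partial_{\nu^{-1}}\Delta$ to equal $x_1+x_2-u_{\tau(a)}-u_{\tau(b)}$, as in the $n=4$ computation. With $\tau$ arbitrary this forces all pairwise sums $u_i+u_j$ to be equal, hence all $u_i$ equal, hence $u_i=0$. That contradicts the five-dimensional $H^2$. So I will recompute the degree carefully and check which $\nu$ really have length $9$ and are symmetric in $x_1,x_2$. My expectation is that the third-case $\nu$ have length at most $8$, so their relations are quadratic, and that no linear relation beyond $\sum u_i=0$ survives. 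This is consistent with the five degree-$2$ generators. I also expect $y$ to be expressible through the $u_i$, which is where the generator $x$ in \eqref{GDMR} comes from: I plan to define $x$ as a suitable linear combination, for example $x=y+c\sum$ of the $u_i$, normalized so that the relations take the stated form.

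\textbf{Step 2 (quadratic relations, the main obstacle).} For each admissible $\nu$ of length $8$ and every $\tau$, compute $\partial_{\nu^{-1}}\Delta(x,u_\tau)$ by successive divided differences, keeping only the results that are symmetric in $x_1,x_2$ and in $x_3,x_4,x_5$. These are quadratic in $y,p,u$. Using $S_5$-equivariance in $\tau$, the relations come in orbits, which should reduce to three families:
\begin{itemize}
\item from the first bullet, expressions that eliminate $p$ in terms of $y$ and $u$;
\item from the second and third bullets, relations of the form $u_iu_j=x^2$ and $u_iy=\ldots$.
\end{itemize}
The hard part is organizing this computation so it stays finite. I will first list all $\nu$ with $l(\nu)=8$ in each bullet (a short list, as in the $n=4$ case), then compute one representative $\tau$ per $S_5$-orbit and symmetrize.

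\textbf{Step 3 (checking the presentation).} Verify that the resulting ideal equals $\langle u_iu_j-x^2,\;u_l^2+4x^2,\;u_ix\rangle$ modulo $\sum u_i$. The constant $4$ should come from $u_l\sum_i u_i=0$: together with $u_lu_i=x^2$ for the four indices $i\ne l$, this gives $u_l^2=-4x^2$, which is a useful consistency check. Finally, confirm that the quotient has graded dimensions $1,5,1$ with $x^2\ne0$, and that the cubic relations ($l=7$) add nothing new.
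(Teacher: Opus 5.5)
Your outline follows the paper's route: reduce \eqref{DM52} to the generators $y=x_1+x_2$, $u_1,\dots,u_4$, $p=x_1x_2$, and use only admissible $\nu$ of large length. But it is not yet a proof. All of the substance is deferred to Step~2, and several of your predictions about what that step produces are wrong.

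\textbf{No length-$9$ permutation.} There is no admissible $\nu$ of length $9$. The third bullet reaches at most $l(\nu)=8$, and only for $\nu=52431$. The second bullet reaches at most $7$. The first reaches $8$ only for the element with $\nu(5)=3$ that the paper denotes $43521$. Hence there are no new degree-$2$ relations. So $y$ is \emph{not} expressible through the $u_i$; that would force $b_2\le 4$. The $x$ of \eqref{GDMR} is simply $x_1+x_2$, and your normalization $x=y+c\sum u_i$ is vacuous because $\sum u_i=0$.

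\textbf{Source of the degree-$4$ relations.} The second bullet contributes nothing in degree $4$. None of the raw degree-$4$ relations has the shape $u_iu_j=x^2$; that shape only emerges after an elimination your proposal does not contain.

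\textbf{What is missing.} Concretely, three things.

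(i) The two classes themselves. From $43521$ you get $(x_1-u_{\tau(5)})(x_2-u_{\tau(5)})$. From $52431$ you get $x_1^2+x_1x_2+x_2^2-(x_1+x_2)(u_a+u_b+u_c)+e_2(u_a,u_b,u_c)$ with $\{a,b,c\}=\{\tau(3),\tau(4),\tau(5)\}$. For arbitrary $\tau$ these give $u_i^2-yu_i+p=0$ for every $i$, and $y^2-p-y(u_a+u_b+u_c)+e_2(u_a,u_b,u_c)=0$ for every $3$-subset $\{a,b,c\}$.

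(ii) The elimination. Subtracting gives $u_i^2-u_j^2=(u_i-u_j)y=(u_i-u_j)(u_k+u_l)$ for each of the three pairs $\{k,l\}$ disjoint from $\{i,j\}$. Adding these three identities and using $\sum u_i=0$ gives $3(u_i^2-u_j^2)=-2(u_i^2-u_j^2)$, hence $u_i^2=u_j^2$. Then $(u_i-u_j)u_k=0$ for $k\notin\{i,j\}$, so all $u_iu_j$ with $i\neq j$ equal one class $m$. Also $yu_i=0$, from $(u_i-u_j)y=0$ and $y\sum u_i=0$. Your identity $u_l\sum_i u_i=0$ then gives $u_l^2=-4m$, the first family gives $p=4m$, and the second gives $y^2=m$. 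This is \eqref{GDMR} with $x=y$.

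(iii) The nonvanishing of $x^2$ that you want in Step~3. There are $15$ relations on the $16$-dimensional degree-$4$ space spanned by $p$ and the quadratic monomials in $y,u_1,\dots,u_4$. The functional $u_iu_j\mapsto 1$, $u_i^2\mapsto -4$, $yu_i\mapsto 0$, $y^2\mapsto 1$, $p\mapsto 4$ is compatible with $\sum u_i=0$ and kills all of them. So degree $4$ is exactly $\C\,x^2$. Degree $6$ then vanishes automatically, so the classes with $l(\nu)\le 7$ add nothing.
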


\begin{proof}
We first note that the smooth symplectic quotient  $\mu ^{-1}(\xi)/T^5$  is of dimension $4$.  Now,  it follows from~\eqref{DM52} that $x_1+\ldots +x_5=0$, that is $x_3+x_4+x_5=-(x_1+x_2)$. In addition, $\sigma _{2}(x_1, \ldots , x_5) = \sigma _{2}(u_1, \ldots , u_5)$ implies  $\sigma _{1}(x_1, x_2)\sigma _{1}(x_3, x_4, x_5) + \sigma _{2}(x_3, x_4, x_5) = \sigma _{2}(u_1, \ldots , u_5)$, that is 
\[
\sigma _{2}(x_3, x_4, x_5) =  -\sigma_{1}^{2}(x_1,x_2) +\sigma _{2}(u_1, \ldots , u_5).
\]
Note that $\sigma _{3}(x_3,x_4, x_5) = \sigma_{2}(x_1, x_2)\sigma _{1}(x_1, x_2)$. Thus, we are left with  five $2$-degree generators $x_1+x_2, u_1, u_2, u_3, u_4$,  one $4$-degree generator $\sigma_{2}(x_1, x_2)$  for the cohomology ring  $H^{\ast}(\mu ^{-1}(\xi)/T^5)$ . 

We are first  interested in relations given by   $\partial  _{\nu ^{-1}}\Delta (x, u_{\tau})$ of degree $2$ and $4$, while the other relations  should turn all to  vanish.   The determinant polynomial $\Delta (x, u_{\tau})$ is of degree  $10$, while the    degree of a polynomial  $\partial  _{\nu ^{-1}}\Delta (x, u_{\tau})$ is $10-l$, where $\nu = s_{i_1}\cdots s_{i_l}$ is  a reduced expression,  that is  $l$ is minimal. Thus,   $\partial  _{\nu ^{-1}}\Delta (x, u_{\tau})$  gives a cohomology element  of degree $20 -2l$, so it turns enough to  consider $l= 8, 9$.

Using Lemma~\ref{coh52gen}, we find all such permutations $\nu \in S_5$. If
\begin{itemize}
 \item  $\nu (5)=5$, then $l(\nu) \leq 6$,
\item   $\nu (5)=4$, then $l(\nu)\leq 1+ 6\ = 7$; 
\item  $\nu (5)=3$, then  $l(\nu )\leq 2 +6 =8$. The maximal lenght  is obtained for $\nu =43521$;
\item  $\nu (5) =1$ and $\nu (1)=2$ then  $l(\nu)\leq 4 +3=7$, 
\item   $\nu (5)=2$ and $\nu (1)=1$ then  $l(\nu)\leq 3+3=6$,
\item $\nu (5)=1$ and $\nu (2)=2$  then  $l(\nu) \leq  4+1+3=8$. The maximal lenght is obtained  for $\nu = 52431$,
\item  $\nu(5)=2$ and $\nu (2)=1$ then  $l(\nu)\leq 1+3+3=7$.
\end{itemize}

Altogether,  we do not  have the relations in degree $2$, and for  the relations of  degree $4$ we proceed with $\nu = 43521$ and $\nu = 52431$.

For $\nu = 43521$ we have that $\nu = s_1s_2s_1s_3s_2s_1s_4s_3$, that is $\nu ^{-1}= s_3s_4s_1s_2s_3s_1s_2s_1$ so it is straightforward that it holds:
\[
P=\partial_{21}\Delta (x, u) = (x_1-u_3)(x_1-u_4)(x_1-u_5)(x_2-u_4)(x_2-u_5)(x_3-u_4)(x_3-u_5)(x_4-u_5).
\]
Therefore, 
\[
\partial _{1}P =  (x_1-u_4)(x_1-u_5)(x_2-u_4)(x_2-u_5)(x_3-u_4)(x_3-u_5)(x_4-u_5).
\]
Then $\partial_{31}P= (x_1-u_4)(x_1-u_5)(x_2-u_4)(x_2-u_5)(x_3-u_5)(x_4-u_5)$ and $\partial_{231}P=(x_1-u_4)(x_1-u_5)(x_2-u_5)(x_3-u_5)(x_4-u_5)$, that is   $\partial_{1231}P= (x_1-u_5)(x_2-u_5)(x_3-u_5)(x_4-u_5)$ and $\partial _{41232}P = (x_1-u_5)(x_2-u_5)(x_3-u_5)$. Finally,
\[
\partial_{\nu ^{-1}}\Delta (x, u) = (x_1-u_5)(x_2-u_5).
\]
Since we can take $\tau$ to be arbitrary we obtain the following relations
\begin{equation}\label{prva}
u_{i}^{2}-(x_1+x_2)u_i +x_1x_2 = 0, \; 1\leq i\leq 5.
\end{equation}
This eliminates $x_1x_2$ as a generator and leads to the equations
\begin{equation}\label{druga}
u_i^2-u_j^2 = (u_i-u_j)(x_1+x_2), \;\; 1\leq i<j\leq 5.
\end{equation}

For  $\nu = 52431$  we have that $\nu =  s_1s_2s_3s_2s_4s_3s_2s_1$, that is $\nu ^{-1} = s_1s_2s_3s_4s_2s_3s_2s_1$,   so $\partial_{\nu ^{-1}}\Delta (x, \tau) = \partial_{123423}P$. It follows that $\partial _{3}P= (x_1-u_3)(x_1-u_4)(x_1-u_5)(x_2-u_4)(x_2-u_5)(x_3-u_5)(x_4-u_5)$, then $\partial _{23}P = (x_1-u_3)(x_1-u_4)(x_1-u_5)(x_2-u_5)(x_3-u_5)(x_4-u_5)$ and $\partial_{423}P=(x_1-u_3)(x_1-u_4)(x_1-u_5)(x_2-u_5)(x_3-u_5)$. Further,  $\partial_{3423}P = (x_1-u_3)(x_1-u_4)(x_1-u_5)(x_2-u_5)$ and $\partial_{23423}P= (x_1-u_3)(x_1-u_4)(x_1-u_5)$. Finally
\begin{equation}\label{druga1}
\partial_{\nu ^{-1}}\Delta (x, u) = x_1^2 +x_1x_2+x_2^2 -(x_1+x_2)(u_3+u_4+u_5)  + u_3u_4+u_3u_5 + u_4u_5.
\end{equation} 
Taking into account that $\tau$ can be arbitrary this gives the relations 
\begin{equation}\label{druga2}
(u_i - u_j)(x_1+x_2) = (u_i-u_j)(u_k+u_l),
\end{equation}
 where $i, j, k, l$ are all distinct.  Together with~\eqref{druga} we deduce that 
\begin{equation}\label{treca}
u_i^2 - u_j^2 = (u_i-u_j)(u_{k}+u_l) ,
\end{equation}
which implies 
\begin{equation}\label{jos}
(u_i-u_j)(u_p+u_q) =(u_i-u_j)((u_p+u_r) = (u_i-u_j)(u_q+u_r),
\end{equation}
for $ \{p, q, r\} =\{1, \ldots , 5\}\setminus \{i, j\}.$
It follows that
\[
3(u_i-u_j)(u_p+u_q) = 2(u_i-u_j)(u_p+u_q+u_r) =-2(u_i-u_j)(u_i+u_j)=-2(u_i^2-u_j^2).
\]
From~\eqref{treca} we obtain $5(u_i^2-u_j^2)=0$ that is 
\[
u_i^2 = u_j^2, \; 1\leq i <j\leq 5.
\]
Now,  by~\eqref{treca}  and~\eqref{jos} we have $(u_i-u_j)(u_p+u_q) = (u_i-u_j)(u_p-u_q) = 0$, so
$u_iu_q=u_ju_q$ for distinct $i, j, q$, that is 
\[
u_iu_j = u_ku_l \; \text{for all distinct}\; i, j, k, l.
\]
In addition, from~\eqref{druga2} it follows
\[
u_i(x_1+x_2) = u_j(x_1+x_2)
\]
for all $i, j$.
The fact that $(u_1+\ldots + u_5)(x_1+x_2) = 0$  implies that 
\[
u_i(x_1+x_2)=0.
\]
Now,  $0= (u_1+\ldots +u_5)^2 = 5u_1^2 + 2u_1(u_2+u_3+u_4+u_5) + 2u_2(u_3+u_4+u_5) + 2(u_3u_4+ u_3u_5 + u_4u_5) = u_1^2+2(-u_1u_2+u_3u_4+ u_3u_5 + u_4u_5) = u_1^2+4u_1u_2$,  which gives
\[
u_1^2= -4u_1u_2,
\]
Now, from~\eqref{druga1} we deduce
\[
x_1x_2 =   4u_1u_2,
\]
and  from~\eqref{prva} and~\eqref{druga} that
\[
(x_1+x_2)^2 = x_1^2+x_1x_2+x_2^2+x_1x_2 = -3u_1u_2+4u_1u_2 =u_1u_2. 
\]
This further gives 
\[
(x_1+x_2)^3 = u_1u_2(x_1+x_2) =0.
\]

\end{proof}

For a regular value $\xi = (\frac{2}{3}, \frac{2}{3}, \frac{2}{9}, \frac{2}{9}, \frac{2}{9})\in \Delta _{5,2}$,  that is for the chamber $C_{\omega} = \{ x_1+x_2>1\} \cap )\cap _{1\leq i< j\leq 5, \{i, j\}\neq \{1, 2\}}\{ x_i+x_j<1\}$ the symplectic reduction  $\mu ^{-1}(\xi)/T$  is isomorphic to the  Losev-Manin space $\bar{L}_{0,5}$,  which is further isomorphic to the toric manifold over the hexagon. We provide computation of cohomology for $\bar{L}_{0, 5}$ by making use of Theorem~\ref{WeHassett} and afterwords compare it with computations using Chow rings and toric topology.

\begin{lem}\label{LMGEN}
Let $\xi = (\frac{2}{3}, \frac{2}{3}, \frac{2}{9}, \frac{2}{9}, \frac{2}{9})\in \Delta _{5,2}$.
 Then 
\[
H^{\ast}(\bar{L}_{0,5}) \cong \frac{\C [\sigma _{i}(x_1,  x_2), \sigma _{i}(x_3, x_4, x_5), u_1, \ldots, u_5]}{\sigma _{i}(x_1, \ldots, x_5) - \sigma _{i}(u_1, \ldots , u_5), \sum _{i-1}^{5}u_i, \partial  _{\nu ^{-1}}\Delta (x, u_{\tau})},
\]
for  $\nu, \tau \in S_{5}$ such that:
\begin{itemize}
\item $\nu (5)\in\{3,4,5\}$ and $\tau$ is arbitrary,
\item $\{\nu (1), \nu (5)\}= \{1,2\}$ and $\tau$ is  arbitrary,
\item $\{\nu (2),  \nu (5)\} =  \{1,2\} $ and  $\tau$ is  such that $\{\tau (3), \tau (4), \tau (5)\}\neq \{3, 4, 5\}$.
\item $\{\nu (3), \nu (5)\} = \{1,2\}$ and  $\tau$ is  such that $\{\tau (4), \tau (5)\} = \{1, 2\}$.
\end{itemize}
\end{lem}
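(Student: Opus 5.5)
The plan is to specialize Theorem~\ref{Gn2} (equivalently Theorem~\ref{firstmain}) to $n=5$, $\lambda=(1,1,0,0,0)$ and the given value $\xi=(\frac23,\frac23,\frac29,\frac29,\frac29)$. The only real work is to make the $\tau$-conditions in the third case explicit for this $\xi$.

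\emph{Step 1: $\xi$ is regular.} All coordinates lie strictly between $0$ and $1$, and they sum to $\frac43+\frac23=2$. Any partial sum has the form $\frac{6a+2b}{9}$ with $0\le a\le 2$ and $0\le b\le 3$. It cannot equal $1$, because $6a+2b$ is even while $9$ is odd. By the regularity criterion from~\cite{BT2}, recalled in the proof of Theorem~\ref{Gn2}, $\xi$ is a regular value and lies in a chamber of maximal dimension. Checking pairwise sums ($\frac43$, $\frac89$, $\frac49$) shows that this chamber is $\{x_1+x_2>1\}\cap\bigcap_{\{i,j\}\neq\{1,2\}}\{x_i+x_j<1\}$. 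As stated before the lemma, its space of parameters is $\bar L_{0,5}$, so the left-hand side is justified and Theorem~\ref{GoldinGR} gives the displayed presentation with $k=2$.

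\emph{Step 2: the first two cases.} These come over verbatim from Theorem~\ref{Gn2}. If $\nu(5)\in\{3,4,5\}$, take $k=4$: then $\lambda_{\nu(5)}=0<\xi_{\tau(5)}$ for every $\tau$. If $\{\nu(1),\nu(5)\}=\{1,2\}$, take $k=1$: then $\sum_{i\ge 2}\lambda_{\nu(i)}=1<2-\xi_{\tau(1)}$ for every $\tau$.

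\emph{Step 3: the third case.} Here $\nu(5)\in\{1,2\}$ and the other element of $\{1,2\}$ sits at position $j\in\{2,3\}$. For $k<j$ the left-hand sum equals $2$ and no $\tau$ qualifies. For $k\ge j$ the left-hand sum equals $1$, and the weakest condition occurs at $k=j$. Note that $k>j$ is weaker only for larger tails, and those are implied by the case $k=j$ because all $\xi_i>0$.

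For $j=2$ we need $\xi_{\tau(3)}+\xi_{\tau(4)}+\xi_{\tau(5)}>1$. The triple $\{3,4,5\}$ gives $\frac23<1$. Every other triple contains $1$ or $2$ and has sum at least $\frac23+\frac49>1$. Hence the condition is $\{\tau(3),\tau(4),\tau(5)\}\neq\{3,4,5\}$.

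For $j=3$ we need $\xi_{\tau(4)}+\xi_{\tau(5)}>1$. Only the pair $\{1,2\}$ (sum $\frac43$) qualifies; the others give $\frac89$ or $\frac49$. Hence the condition is $\{\tau(4),\tau(5)\}=\{1,2\}$.

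These are precisely the four listed cases. The same conclusion follows from Theorem~\ref{firstmain}, whose wall normals for this chamber are $\{1,2\}$ and the triples $\{i,j,k\}\neq\{3,4,5\}$ (complements of the pairs $x_i+x_j<1$).

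The only potential obstacle is bookkeeping: one must make sure that no extra $(\nu,\tau)$ pairs arise from $k>j$ beyond those already forced at $k=j$. This is handled by the monotonicity remark in Step 3.
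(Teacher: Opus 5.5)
Your proposal is correct and matches the paper's own, implicit, argument. The paper gives no separate proof of Lemma~\ref{LMGEN}: it is Theorem~\ref{Gn2}, equivalently Theorem~\ref{firstmain}, specialized to the chamber $\{x_1+x_2>1\}\cap\bigcap_{\{i,j\}\neq\{1,2\}}\{x_i+x_j<1\}$, and your tail-sum computations for $j=2,3$ reproduce exactly the listed $\tau$-conditions.

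One wording slip needs fixing. For $k>j$, the inequality on the shorter tail \emph{implies} the one at $k=j$ (since all $\xi_i>0$), not the other way round. That direction is what shows no new $\tau$ arise from $k>j$.
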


\begin{cor}\label{GLM}
The cohomology ring of $\bar{L}_{0,5}$ is given by
\begin{equation}\label{GLMR}
H^{\ast}(\bar{L}_{0,5}) \cong \frac{\C[ u_2, u_3, u_4 , u_5]}{\langle \text{relations}\rangle},
\end{equation}
where the relations are: 
\[
u_2u_3=u_2u_4=u_2u_5,\;  u_3u_4=u_3u_5=u_4u_5=-9u_2u_3, 
\]
\[
 u_2^2=11u_2u_3, \; u_3^2=u_4^2=u_5^2 = 16u_2u_3.
\]

\end{cor}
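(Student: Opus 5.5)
I will follow the computation for $\overline{\mathcal{M}}_{0,5}$ in Corollary~\ref{GDM}, starting from the presentation of Lemma~\ref{LMGEN}. First I reduce the generators exactly as before. From $\sigma_1(x)=\sigma_1(u)=0$ and $\sigma_2(x)=\sigma_2(u)$, the classes $\sigma_i(x_3,x_4,x_5)$ are expressed through $\sigma_1(x_1,x_2)$, $\sigma_2(x_1,x_2)$ and the $u_i$. This leaves the degree-$2$ generators $x_1+x_2,u_1,\dots,u_4$ and the degree-$4$ generator $x_1x_2$.

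The reduction $\bar{L}_{0,5}$ has real dimension $4$, so only relations $\partial_{\nu^{-1}}\Delta(x,u_\tau)$ of cohomological degree $2$ and $4$ need to be computed. These come from reduced lengths $l(\nu)=9$ and $l(\nu)=8$. The higher-degree relations are then expected to follow from the degree-$2$ and degree-$4$ ones together with Poincaré duality.

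Next I enumerate the admissible $\nu$ of length $8$ or $9$, case by case as in the proof of Corollary~\ref{GDM}.
\begin{itemize}
\item The first three families of Lemma~\ref{LMGEN} give, as computed there, only $\nu=43521$ (with $\tau$ arbitrary) and $\nu=52431$ (now with $\{\tau(3),\tau(4),\tau(5)\}\neq\{3,4,5\}$). The restriction on $\tau$ for $52431$ is the first difference from the Deligne--Mumford case.
\item The new family $\{\nu(3),\nu(5)\}=\{1,2\}$ contains $\nu=54132$ or similar long words. I must find its members of length $8$ and $9$; in particular $\nu=54213$ or $\nu=5x1y2$ patterns may have length $9$. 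A length-$9$ element would give a linear relation among $x_1+x_2$ and the $u_i$, allowed only for $\tau$ with $\{\tau(4),\tau(5)\}=\{1,2\}$.
\end{itemize}
For each such $\nu$, I compute $\partial_{\nu^{-1}}\Delta$ by successive divided differences, as was done for $43521$ and $52431$. I discard any output that is not symmetric in $x_1,x_2$ and in $x_3,x_4,x_5$, as noted for $\nu_3$ in the $n=4$ computation.

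Then I substitute the allowed $\tau$. The key relations are as follows.
\begin{itemize}
\item From $43521$, for all $i$:
\[
u_i^2-(x_1+x_2)u_i+x_1x_2=0 .
\]
\item From $52431$, but only for $\{\tau(3),\tau(4),\tau(5)\}\neq\{3,4,5\}$: the relations $(u_i-u_j)(x_1+x_2)=(u_i-u_j)(u_k+u_l)$, now only for those index sets allowed.
\item From the new family: relations involving the pair $u_1,u_2$ with $x_1+x_2$, which should single out the indices $1,2$.
\end{itemize}
I would then eliminate $x_1x_2$ using the first relation. Next I would solve for $x_1+x_2$ in terms of the $u_i$; I expect a linear relation like $x_1+x_2=u_1+u_2$ to appear from the length-$9$ word. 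Finally I eliminate $u_1=-(u_2+\dots+u_5)$, leaving generators $u_2,\dots,u_5$. Reducing the remaining quadratic relations by linear algebra in the $10$-dimensional space of quadratic monomials should give a $1$-dimensional degree-$4$ part spanned by $u_2u_3$, with the stated coefficients $11,16,-9$.

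The main obstacle is the bookkeeping of the $\tau$-restrictions. Unlike the Deligne--Mumford chamber, the relations are no longer $S_5$-symmetric in the $u_i$, only $S_2\times S_3$-symmetric. So one must carefully track which specializations of the $52431$ and new-family relations hold before solving the linear system. I would first organize the quadratic monomials by this $S_2\times S_3$ symmetry to reduce the unknowns. As a check, I would verify that the resulting ring has Betti numbers $1,4,1$, the cohomology of the toric surface over the hexagon.
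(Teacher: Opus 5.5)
Your plan is essentially the paper's proof: the same reduction of generators, the restriction to $l(\nu)\in\{8,9\}$, the relations from $43521$ (all $\tau$) and from $52431$ (now only for $\{\tau(3),\tau(4),\tau(5)\}\neq\{3,4,5\}$), the linear relation $x_1+x_2=u_1+u_2$ from the unique length-$9$ member of the new family, and elimination down to $u_2,\dots,u_5$. For the enumeration you left open, note that the paper's labels are the inverses of the permutations satisfying the listed conditions (e.g.\ $43521^{-1}=54213$ has $\nu(5)=3$); the length-$9$ member of $\{\nu(3),\nu(5)\}=\{1,2\}$ is therefore $54231=53421^{-1}$ (not $54213$, and no $5x1y2$ pattern, since those have length at most $8$), and it is the divided differences along the word $s_1s_2s_3s_4s_2s_3s_1s_2s_1$ of $54231$ that produce the symmetric class $x_1+x_2-u_{\tau(4)}-u_{\tau(5)}$.
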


\begin{proof}
We proceed in an analogous way as in the previous case. Thus, for the relations of degree $8$ we proceed with the permutations $\nu = 43521$ and $\nu = 52431$. The additional permutations here are given by the last condition in Lemma~\ref{LMGEN}. Among them there is the permutation $\nu = 53421 = s_1s_2s_1s_3s_2s_4s_3s_2s_1$ of length $9$ and the permutations $v_1=35421 = s_1s_2s_1s_3s_2s_4s_3s_2, \nu _2 = 53412 = s_2s_1s_3s_2s_4s_3s_2s_1$.
of length $8$.  Now for $\nu ^{-1} = s_1s_2s_3s_4s_2s_3s_1s_2s_1$ we obtain 
\[
P= \partial_{23423121}\Delta (x, u_{\tau}) = (x_1-u_{\tau (4)})(x_1-u_{\tau (5)}),
\]
so we deduce
\[
\partial_{\nu ^{-1}}\Delta (x, u_{\tau}) =  x_1+x_2 -u_1 - u_2=0.
\]
Note that $\partial _{\nu _{1}^{-1}}\Delta (x, u_{\tau})$ is not a symmetric function, while for   $\nu _{2}^{-1} = s_1s_2s_3s_4s_2s_3s_1s_2$ we  obtain 
\[
\partial_{\nu _{2}^{-1}}\Delta (x, u_{\tau}) =
 x_1^2+x_1x_2+x_2^2 - (x_1+x_2)(u_1+u_2+u_{\tau (3)}) +u_1u_{\tau (3)} +u_2u_{\tau (3)} +u_{1}u_{2}=0.
\]
Altogether, the relations  are
\[
x_1 +x_2 = u_1+u_2,
\]
\begin{equation}\label{joss}
 x_1^2+x_1x_2+x_2^2 - (x_1+x_2)(u_i+u_j+u_k) +u_iu_j +u_iu_k + u_ju_k =0,
\end{equation}
\begin{equation}\label{iovo}
u_i^2 - (x_1+x_2)u_i +x_1x_2=0,
\end{equation}
where  $ \{i, j, k\} \neq \{3,4,5\}$ in~\eqref{joss}.

Therefore, we have
\begin{equation}\label{trecan}
u_i^2 - u_j^2 = (u_i-u_j)(u_{k}+u_l) , \; \text{for}\; \{i, j, k\}, \{i, j, l\}\neq \{3,4,5\}.
\end{equation}

From~\eqref{jos}  for $\{i, j\} = \{1, 2\}$ it follows: 
\[u_1^2=u_2^2, \;\; u_1u_3=u_2u_3, \; u_1u_4=u_2u_4, \; u_1u_5=u_2u_5
 \]
For $\{i, j\}\neq \{1, 2\}$ the relation~\eqref{trecan} gives   one or two  relations in~\eqref{jos}. But,  from~\eqref{joss}   we also obtain for $\{i, j, k\} = \{1,3,4\}, \{1, 3,5\}$ that
$u_1^2  -u_2u_3 -u_2u_4+u_3u_4 = u_1^2 -u_2u_3-u_2u_5+u_3u_5=0$, which implies
$(u_2-u_3)(u_4-u_5) =0$. In this way,   for distinct $ i,j, k, l\in {2,3,4,5}$ we  obtain  
\[(u_i-u_j)(u_k-u_l) = 0.\]
In particular, it holds $(u_1+u_2)(u_4-u_5)=u_4^2-u_5^2$, $(u_3-u_2)(u_4+u_5)=0$, which implies $(u_1+u_3)(u_4-u_5)=u_4^2-u_5^2$. Further,  Iit implies
$(u_1+u_2+u_3+u_1)(u_4-u_5)= 2(u_4^2-u_5^2)$, that is $u_1(u_4-u_5) -(u_4^2-u_5^2) =2(u_4^2-u_5^2)$. This gives
\[
3(u_4^2-u_5^2) = u_{1}(u_4-u_5).
\]
From~\eqref{iovo} it follows
\[
u_4^2-u_5^2 = (u_1+u_2)(u_4-u_5) =2u_1(u_4-u_5).\]
Altogether we obtain $5(u_4^2-u_5^2)=0$, which leads
\[
u_3^2=u_4^2=u_5^2.
\]
 As in the previous Theorem we deduce
\[
u_3u_4=u_3u_5=u_4u_5.
\]
Now from~\eqref{iovo} it follows
\[
u_2u_3=u_2u_4 = u_2u_5.
\]
Further, $u_1(u_1+\ldots +u_5)=0$ gives $u_1^2 = -u_1u_2-3u_2u_3$, while from above $u_1^2 = 2u_2u_3-u_3u_4$, thus $5u_2u_3 = -u_1u_2+u_3u_4$. On the other hand, $(u_1+\ldots +u_5)^2=0$ gives $2u_2^2 +3u_3^2+2u_1(u_2+\ldots +u_5) +2u_2(u_3+u_4+u_5) + 2(u_3u_4 + u_3u_5+u_4u_5)=0$, that is
$3u_3^2 +6u_2u_3+6u_3u_4 = 0$, so $u_3^2 = -2u_2u_3-2u_3u_4$, while from~\eqref{iovo}  $u_3^2 = 2u_2u_3-u_1u_2$, which gives $4u_2u_3= u_1u_2-2u_3u_4$. Altogether, we obtain
\[
u_3u_4 = - 9u_2u_3,
\]
and then
 \[
u_2^2 = 11u_2u_3, \;\; u_3^2=16u_2u_3.
\]
We take $u_2, u_3, u_4, u_5$ as $2$ - degree generators.
Note that  $ u_2u_3^2 =  16u_2^2u_3 = 16\cdot 11 u_2u_3^2$, which  gives $u_2u_3^2=0$.

\end{proof}

\section{Comparison with Chow ring  and toric techniques}

In this context two types of moduli spaces arise:
\begin{enumerate}
\item the moduli spaces $\overline{\mathcal{M}}_{0, n}$  of stable genus zero curves with $n$ marked ordered distinct points,
\item the moduli spaces $\overline{\mathcal{M}}_{0, \mathcal{A}}$ of $\mathcal{A} = (a_1, \ldots , a_n)$ - weighted stable genus zero curves.
\end{enumerate}

\subsection{Comparison with  the Chow ring of $\mathcal{M}_{0, 5}$}
The Chow ring $A^{\ast}(\overline{\mathcal{M}}_{0,n})$ was first computed by Keel~\cite{Keel} and then by Tavakol~\cite{Tav}, both using the description of $\overline{\mathcal{M}}_{0, n}$ as an iterated blow up of $(\C P^{1}_{A})^{n-3}$, where $\C P^{1}_{A} = \C P^{1}\setminus \{(1:0), (0:1), (1:1)\}$.

\begin{thm} \label{ChowDM}
The Chow ring of $\overline{\mathcal{M}}_{0,n}$ is given as
\[
A^{\ast}(\overline{\mathcal{M}}_{0,n}) \cong \frac{\Z [D^{S}, S\subset \{1, \ldots, n\}, S^{c} = \{1, \ldots ,n\}\setminus S,  |S|, |S^{c}|\geq 2]}{\langle \text{the following relations}\rangle},
\]
\begin{enumerate}
\item $D^{S} = D^{S^{c}}$ for all $S$.
\item $D^{S}D^{T}=0$ unless one of the following holds:
\[
S\subseteq T,\; T\subseteq S, \; S^{c}\subseteq T,\; T^{c}\subseteq S.
\]
\item For any four distinct elements $i, j, k, l\in \{1, \ldots , n\}$,
\[
\sum_{\substack{i, j\in S \\ k, l\notin S}}D^{S} = \sum_{\substack{i, k\in S \\ j, l\notin S}}D^{S} = \sum_{\substack{i, l\in S \\ j, k\notin S}}D^{S}.
\]
\end{enumerate}
\end{thm}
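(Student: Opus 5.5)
This is Keel's theorem, and I would prove it by the standard iterated blow-up argument, as in Keel's original paper, rather than by the symplectic methods used earlier in this paper.

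\emph{Realizing $\overline{\mathcal{M}}_{0,n}$ as a blow-up.} Use the forgetful morphisms $\pi_n:\overline{\mathcal{M}}_{0,n}\to\overline{\mathcal{M}}_{0,n-1}$ and induct on $n$. The base case $n=4$ is $\overline{\mathcal{M}}_{0,4}\cong\C P^1$. There the three boundary points $D^{\{1,2\}}$, $D^{\{1,3\}}$, $D^{\{1,4\}}$ are rationally equivalent, and this is exactly relation (3). For the inductive step, Keel shows that the map
\[
\overline{\mathcal{M}}_{0,n}\to\overline{\mathcal{M}}_{0,n-1}\times\overline{\mathcal{M}}_{0,4}
\]
(with $\pi_n$ on the first factor and the forgetful map to the points $\{1,2,3,n\}$ on the second) is an iterated blow-up along smooth centres. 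These centres are the strict transforms of the boundary divisors $D^{S}$ and their intersections, taken in order of increasing dimension.

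\emph{Applying Keel's blow-up formula.} The key algebraic input is Keel's lemma. Let $\tilde X\to X$ be the blow-up along a regular embedding $Z\subset X$ such that $A^*(X)\to A^*(Z)$ is surjective with kernel $J$. Then
\[
A^*(\tilde X)=A^*(X)[T]\big/\bigl(J\cdot T,\ P(-T)\bigr),
\]
where $T$ is the class of the exceptional divisor and $P$ is a lift of the Chern polynomial of the normal bundle. I would check that each centre satisfies the surjectivity hypothesis; this holds because each centre is itself a product of smaller moduli spaces $\overline{\mathcal{M}}_{0,k}$, and their rings are generated by boundary divisors by induction. The exceptional divisors are then identified with the new boundary divisors $D^{S}$ with $n\in S$.

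\emph{Matching the relations.} Relation (1) is just the labeling convention. Relation (2) comes from $J\cdot T=0$ together with the fact that non-nested boundary divisors are disjoint: two non-nested splittings are incompatible in stable trees, so the corresponding strata do not meet. Relation (3) is pulled back from $\overline{\mathcal{M}}_{0,4}$ under the forgetful maps to $\{i,j,k,l\}$. The pullback of a boundary point of $\overline{\mathcal{M}}_{0,4}$ is the sum of all $D^{S}$ that separate the pairs in the given way.

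\emph{Showing the relations suffice.} This is the main obstacle: proving that the relations (2) and (3) already imply the relations $P(-T)=0$ coming from normal bundles. I would attempt it by expressing the normal-bundle Chern classes through the $\psi$-classes, and writing those as combinations of $D^{S}$ via relation (3). Failing that, I would use a rank count. The Poincaré polynomial of the quotient ring can be bounded above by a combinatorial basis of monomials in nested $D^{S}$, and this bound can be compared with Keel's recursion for the Betti numbers of $\overline{\mathcal{M}}_{0,n}$. The surjectivity from the presented ring onto $A^*$, which is clear since the $D^{S}$ generate, then becomes an isomorphism.
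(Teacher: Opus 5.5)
Your route is the one the paper relies on. The paper gives no argument of its own for this statement; it only cites Keel (and Tavakol). Keel's proof is exactly the iterated blow-up $\overline{\mathcal{M}}_{0,n}\to\overline{\mathcal{M}}_{0,n-1}\times\overline{\mathcal{M}}_{0,4}$ combined with his Chow-ring blow-up formula. Your arguments that (1)--(3) hold and that the $D^S$ generate are fine in outline.

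\textbf{The gap.} It is the step you flag yourself: showing that (1)--(3) generate \emph{all} relations. That is the real content of the theorem, and you only name two strategies without carrying either out.

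Your fallback presupposes the hard part. All monomials in pairwise compatible $D^S$ do span the presented ring $R_n$, but they grossly overcount. For $n=5$ there are ten classes $D^S$ against $b_2=5$, and twenty-five quadratic monomials in compatible divisors (squares included) against $b_4=1$. A usable bound needs a spanning subset of exactly Betti-number size, obtained by straightening with (1)--(3) alone. That is a separate basis theorem, of the kind Yuzvinsky proved for wonderful models, not a count.

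Your first strategy is the right one, and it is Keel's, but it has to be executed:
\begin{enumerate}
\item Assuming $R_{n-1}\cong A^*(\overline{\mathcal{M}}_{0,n-1})$, show that $D^S\mapsto D^S+D^{S\cup\{n\}}$ and $h\mapsto\sum_{1,2\in U;\,3,n\notin U}D^U$ define a ring map $R_{n-1}\otimes\Z[h]/(h^2)\to R_n$. Here $h$ is the point class of $\overline{\mathcal{M}}_{0,4}$.
\item Identify the exceptional divisor over each centre $\sigma_i(D^T)$ with $D^{T\cup\{n\}}$.
\item Verify inside $R_n$ both $J\cdot D^{T\cup\{n\}}=0$ and the normal-bundle relation. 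Only then does the blow-up presentation give an inverse $A^*(\overline{\mathcal{M}}_{0,n})\to R_n$.
\end{enumerate}
The key identity needs no $\psi$-classes. Multiply the instance of relation (3) with $i,j\in S$, $k,l\notin S$ by $D^S$. Every $U$ containing $i,k$ but not $j,l$ crosses $S$, so that side vanishes by (2). This leaves $(D^S)^2=-\sum D^SD^U$, summed over $U\neq S$ with $i,j\in U$, $k,l\notin U$. This is the presented-ring form of $D^S|_{D^S}=-\psi_\ast-\psi_\bullet$, and it is the ingredient needed to match the relations $P(-E)=0$.

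\textbf{Smaller points.}
\begin{itemize}
\item In the one-step map the centres are not boundary divisors and their intersections taken by increasing dimension. Blowing up a Cartier divisor changes nothing, and that ordering belongs to the global pictures over $\C P^{n-3}$ or $(\C P^1)^{n-3}$. The centres are the codimension-two loci $\sigma_i(D^T)$ where sections of $\overline{\mathcal{M}}_{0,n-1}\times\overline{\mathcal{M}}_{0,4}\to\overline{\mathcal{M}}_{0,n-1}$ collide. They all have the same dimension and are ordered by $|T|$. Each is still isomorphic to $D^T\cong\overline{\mathcal{M}}_{0,|T|+1}\times\overline{\mathcal{M}}_{0,n-|T|}$ when blown up, since it meets earlier centres only in divisors of itself.
\item Surjectivity of $A^*(X)\to A^*(Z)$ also needs the Chow--K\"unneth property for these products, which Keel proves. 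Generation of each factor by boundary divisors is not enough on its own.
\item When checking (2) geometrically, use compatibility up to complements: one of $S\cap T$, $S\setminus T$, $T\setminus S$, $(S\cup T)^c$ is empty. The list in the statement, read literally, omits the case $S\cap T=\emptyset$. It would then force $D^{12}D^{34}=0$ on $\overline{\mathcal{M}}_{0,5}$, where this product is the class of a point.
\end{itemize}
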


Here $D^{S}$ are divisor in compactification of $(\C P^{1}_{A})^{n-3}$ to $\overline{\mathcal{M}}_{0, n}$ as described in ~\cite{Keel},~\cite{Tav}.  There are relations between the divisors $D_{I}$ and virtual spaces of parameters $\tilde{F}_{\sigma}$,  which are ingredients for a model of the orbits space $G_{n,2}/T^n$, and  these relations  for $n=5, 6$  are explicitly presented in~\cite{IT}.

The Chow ring  for $\overline{\mathcal{M}}_{0, n}$ coincides with its cohomology ring as it is showed in~\cite{Keel}.

Theorem~\ref{ChowDM} for $n=5$ gives:

\begin{cor}
The cohomology ring $H^{\ast}(\overline{\mathcal{M}}_{0, 5})$ has five  $2$-degree generators, which can be represented by  $D^{12}, D^{13}, D^{14}, D^{15}, D^{23}$ and they satisfy   the relations
\begin{equation}\label{DDM}
D^{1i}D^{1j} = 0, \; i\neq j, \; D^{12}D^{23}=D^{13}D^{23}= 0,
\end{equation}
\[ (D^{ij})^2 = - D^{14}D^{23}= - D^{15}D^{23}.
\]
\end{cor}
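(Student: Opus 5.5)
We specialize Theorem~\ref{ChowDM} to $n=5$ and then simplify the presentation. For $n=5$ the admissible subsets $S$ with $|S|,|S^c|\geq 2$ are exactly the $2$-element and $3$-element subsets. Relation (1) identifies $D^{S}$ with $D^{S^c}$, so we are left with the ten divisors $D^{ij}$, $1\leq i<j\leq 5$, all of degree $2$. Keel's result identifies the Chow ring with $H^{\ast}(\overline{\mathcal{M}}_{0,5})$. It therefore suffices to
\begin{itemize}
\item use the linear relations (3) to cut the ten generators down to five;
\item use the quadratic relations (2) together with (3) to derive the products in \eqref{DDM}.
\end{itemize}

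\textbf{The linear relations.} For four distinct indices $i,j,k,l$, let $m$ be the fifth index. The sets $S$ with $i,j\in S$ and $k,l\notin S$ are $\{i,j\}$ and $\{i,j,m\}$, so relation (3) reads
\[
D^{ij}+D^{kl}=D^{ik}+D^{jl}=D^{il}+D^{jk}.
\]
There are $5$ choices of $m$, each giving $2$ relations, for $10$ relations in total. One checks that they have rank $5$, so the space of linear forms is $5$-dimensional; this is consistent with $b_2(\overline{\mathcal{M}}_{0,5})=5$. We then verify that $D^{12},D^{13},D^{14},D^{15},D^{23}$ form a basis by solving explicitly for the remaining divisors. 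For example, taking $(i,j,k,l)=(1,2,3,4)$ and $(1,2,3,5)$ etc., one expresses $D^{24},D^{25},D^{34},D^{35},D^{45}$ as integer combinations of the five chosen ones. A typical formula is $D^{45}=D^{12}+D^{13}-D^{23}$, obtained from $D^{12}+D^{45}=\dots$ after eliminating the other terms.

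\textbf{The quadratic relations.} Relation (2) says $D^{S}D^{T}=0$ unless the corresponding boundary divisors are nested or complementary. For $2$-subsets this means $D^{ij}D^{kl}\neq 0$ only if $\{i,j\}\cap\{k,l\}=\emptyset$ (then $\{k,l\}\subset\{i,j\}^c$), or if the two sets are equal. Hence:
\begin{itemize}
\item $D^{1i}D^{1j}=0$ for $i\neq j$;
\item $D^{12}D^{23}=D^{13}D^{23}=0$;
\item the nonzero mixed products among the chosen basis are $D^{14}D^{23}$ and $D^{15}D^{23}$.
\end{itemize}

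\textbf{Self-intersections.} To get the squares we multiply a linear relation by one of its own terms and kill the vanishing products. For instance, from $D^{ij}+D^{kl}=D^{ik}+D^{jl}$ we get
\[
(D^{ij})^2=D^{ij}\bigl(D^{ik}+D^{jl}-D^{kl}\bigr)=-D^{ij}D^{kl},
\]
because $D^{ij}D^{ik}=D^{ij}D^{jl}=0$ (the sets overlap without being nested or complementary). Doing the same with the other pair $D^{il}+D^{jk}$ gives $D^{ij}D^{kl}=D^{ij}D^{m\,\ast}$-type equalities. Chaining these identities shows that all products of disjoint pairs are equal to a single class, namely the point class; for example $D^{14}D^{23}=D^{15}D^{23}$. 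We then rewrite everything in the chosen basis to land exactly on \eqref{DDM}.

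\textbf{Main obstacle.} The main difficulty is bookkeeping: each elimination has to be checked to be consistent with the chosen basis and with the sign conventions of (3). To make this reliable, we will first establish the identity
\[
D^{ij}D^{kl}=-(D^{ab})^2=\mathrm{pt}
\]
uniformly for all disjoint pairs $\{i,j\},\{k,l\}$ and all pairs $\{a,b\}$, using $S_5$-symmetry, and only afterwards specialize to the basis.
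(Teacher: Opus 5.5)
Your proposal follows essentially the same route as the paper. You specialize relation (3) to $n=5$ to get $D^{ij}+D^{kl}=D^{ik}+D^{jl}=D^{il}+D^{jk}$, use it to reduce to the basis $D^{12},D^{13},D^{14},D^{15},D^{23}$, and multiply by $D^{ij}$ using (2) to obtain $(D^{ij})^2=-D^{ij}D^{kl}$. Your chaining step, showing that all fifteen disjoint products coincide, makes explicit what the paper leaves implicit.

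Two slips to fix:

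\begin{enumerate}
\item Your sample formula $D^{45}=D^{12}+D^{13}-D^{23}$ is wrong; it would force a nontrivial linear relation among your basis. Solving the relations actually gives $D^{45}=D^{14}+D^{15}+D^{23}-D^{12}-D^{13}$.
\item The equalities $D^{ij}D^{kl}=D^{ij}D^{km}$ do not come from the other pair $D^{il}+D^{jk}$, which just returns the same identity. They come from applying $(D^{ij})^2=-D^{ij}D^{kl}$ with $\{k,l\}$ ranging over the pairs in the three-element complement of $\{i,j\}$.
\end{enumerate}
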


\begin{proof}

It follows from Theorem~\ref{ChowDM}, see~\cite{IT} for more accurate computations,  that  $D^{ij} + D^{kl} = D^{ik}+D^{jl} = D^{il}+D^{jk}$ for any four distinct elements $i, j, k, l$. This implies that $D^{12}, D^{13}, D^{14}, D^{15}, D^{23}$ can be chosen for generators and that   $(D^{ij})^{2} = -D^{ij}D^{kl}$ for distinct elements $i, j, k, l$.
\end{proof} 

From Corollary~\ref{GDM} we  deduce:
\begin{lem}
The relations between  the generators for  $H^{\ast}(\overline{\mathcal{M}}_{0, 5})$ given by~\eqref{DDM} and~\eqref{GDMR}  are
\[
D^{12} =  u_2-u_3, \; D^{13} = u_4-u_5, \; D^{14} = \sqrt{5}(2x+u_2+u_3), 
\]
\[ D^{15} = \sqrt{5}(-2 x+u_4+u_5), \; D^{23} = \frac{1}{\sqrt{5}}(a(u_2+u_3)+b(u_4+u_5) + cx),
\]
where $b=-a-10$, $c=5a+25$ and $a^2+10a+15=0$.
\end{lem}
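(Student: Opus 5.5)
The plan is to work entirely inside the presentation~\eqref{GDMR} of Corollary~\ref{GDM}. First we record its structure in low degrees. In degree $2$ the space is spanned by $x,u_1,\ldots,u_5$ modulo $\sum u_i=0$, so it is $5$-dimensional with basis $x,u_2,u_3,u_4,u_5$. In degree $4$ the relations $u_iu_j=x^2$ ($i\neq j$), $u_l^2=-4x^2$ and $u_ix=0$ show that every product of two degree-$2$ classes is a multiple of $x^2$, and $x^2\neq 0$ since $H^4(\overline{\mathcal{M}}_{0,5})\cong\C$.

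This lets us encode the cup product $H^2\times H^2\to H^4$ as a symmetric bilinear form $B$, writing $\alpha\beta=B(\alpha,\beta)\,x^2$. Its values are
\[
B(u_i,u_j)=1 \ (i\neq j),\quad B(u_i,u_i)=-4,\quad B(u_i,x)=0,\quad B(x,x)=1.
\]
As a consistency check, $B(u_1,\sum_i u_i)=-4+4=0$, so $B$ is well defined on the quotient by $\sum u_i$.

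Second, we substitute the proposed expressions for $D^{12},D^{13},D^{14},D^{15},D^{23}$ and check each relation in~\eqref{DDM} by evaluating $B$.

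\textbf{Relations that hold automatically.} Several vanishings need no conditions on $a,b,c$:
\begin{itemize}
\item $D^{12}D^{13}=B(u_2-u_3,u_4-u_5)=1-1-1+1=0$.
\item $D^{12}D^{14}$, $D^{12}D^{15}$, $D^{12}D^{23}$ vanish because $u_2-u_3$ is antisymmetric while the other factor is symmetric in $u_2,u_3$ and contains $x$ only; for example $B(u_2-u_3,u_2+u_3)=-4+1-1+4=0$. The same argument with $u_4,u_5$ gives $D^{13}D^{14}=D^{13}D^{15}=D^{13}D^{23}=0$.
\item $D^{14}D^{15}=5\bigl(-4+B(u_2+u_3,u_4+u_5)\bigr)=5(-4+4)=0$.
\end{itemize}

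\textbf{Self-intersections.} We have $(D^{12})^2=B(u_2-u_3,u_2-u_3)=-10$, and likewise $(D^{13})^2=-10$. Also
\[
(D^{14})^2=5\bigl(4+B(u_2+u_3,u_2+u_3)\bigr)=5(4-6)=-10,
\]
and similarly $(D^{15})^2=-10$. So all the squares $(D^{ij})^2$ equal the common value $-10x^2$, and~\eqref{DDM} then requires $D^{14}D^{23}=D^{15}D^{23}=10x^2$ and $(D^{23})^2=-10x^2$.

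\textbf{Conditions on $a,b,c$.} Evaluating $B$ turns these three requirements into equations:
\[
-6a+4b+2c=10,\qquad 4a-6b-2c=10,\qquad (D^{23})^2=-10 .
\]
Adding the two linear equations gives $b=-a-10$, and then $c=5a+25$. Substituting these into the third condition leaves a quadratic in $a$, which should reduce to $a^2+10a+15=0$. This quadratic is where the $\sqrt5$ normalisation enters, so it is the computation to check carefully.

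Third, we check that the five classes form a basis of $H^2$, so that the substitution gives a ring isomorphism and not just a ring map. Both rings are generated in degree $2$ with one-dimensional $H^4$. The coefficient matrix of the $D$'s in the basis $x,u_2,\ldots,u_5$ is block-triangular:
\begin{itemize}
\item $D^{12}$ and $D^{13}$ span the antisymmetric directions $u_2-u_3$ and $u_4-u_5$;
\item $D^{14}$, $D^{15}$, $D^{23}$ lie in the span of $x$, $u_2+u_3$, $u_4+u_5$, with $3\times 3$ determinant proportional to $-2a-2b+c$ up to sign; this is nonzero since $-2a-2b+c=5a+45$ and $a=-5\pm\sqrt{10}\neq-9$.
\end{itemize}
Since the product pairings match and $H^2$ bases correspond, the two presentations agree. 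The main obstacle is the bookkeeping of signs and the $\sqrt5$ factors in the quadratic condition; everything else is mechanical evaluation of $B$.
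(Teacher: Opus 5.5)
Your proposal is correct and is essentially the paper's proof: both evaluate every product in the presentation~\eqref{GDMR} (your form $B$ is exactly this bookkeeping) and obtain the same linear conditions $-6a+4b+2c=10$, $4a-6b-2c=10$. You should, however, actually carry out the quadratic step rather than say it ``should reduce'': $(D^{23})^2=\frac15(-6a^2-6b^2+c^2+8ab)x^2$, which after substituting $b=-a-10$, $c=5a+25$ equals $(a^2+10a+5)x^2$, so $(D^{23})^2=-10x^2$ is precisely $a^2+10a+15=0$; also, in your added linear-independence check the $3\times3$ determinant is $-2a+2b+c=a+5=\pm\sqrt{10}$, not $-2a-2b+c$, though your conclusion that it is nonzero still stands.
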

\begin{proof}
These elements are linearly independent and, for the clearness,  we verify the relations between them. From~\eqref{GDMR} we immediately see that  $D^{1i}D^{1j}= 0$ and $D^{12}D^{23}=D^{13}D^{23}=0$. Further, 
\[
D^{14}D^{23}= -6ax^2+4bx^2  +2cx^2 = (-6a -4a -40 +10a +50)x^2  = 10x^2,
\] 
 \[ D^{15}D^{23}= 4ax^2 -6bx^2-2cx^2 = (4a +6a +60 -10a -50)x^2 = 10x^2.
\]
 In addition, 
\[( D^{12})^2 =  (D^{13})^2 = -10x^2,\;\;  (D^{14})^2 = (D^{15})^{2} =  5( 4x^2 -6x^2) = -10x^2
\]
 and
 \[(D^{23})^{2} = \frac{1}{5} (-6a^2x^2 -6b^2x^2 +c^2x^2 +8abx^2) = 
\]
\[
\frac{1}{5}(-6a^2 -6a^2-120a -600 +25a^2+250a+625-8a^2 -80a)x^2=
\]
\[ = \frac{1}{5}(5a^2 +50a+25)x^2 = (a^2+10a+5)x^2 =-10x^2 .
\]
\end{proof}

\subsection{Comparison with the  Chow ring $\mathcal{M}_{0, \mathcal{A}}$}

The Chow ring of the moduli spaces $\mathcal{M}_{0, \mathcal{A}}$ for   $\mathcal{A} = (\underbrace{1, \ldots ,1}_{\substack{m}}, \underbrace{\varepsilon, \ldots, \varepsilon}_{\substack{n-m}})$, where  $m\geq 2$ and $\varepsilon < \frac{1}{n-m}$, is explicitly described in~\cite{KKL}. These spaces are named there heavy/light spaces as the associated weight vectors $\mathcal{A}$ consist of only heavy and light weights.

\begin{thm}\label{ChowLM} Let $m\geq 2$ and $n\geq 4$ and let $\mathcal{A} = (a_1,\ldots, a_n)$ such that $a_1=\ldots =a_m=1$ and $a_{m+1} = \ldots = a_n < \frac{1}{n-m}$.  The Chow ring $A^{\ast}(\mathcal{M}_{0, \mathcal{A}})$ is  given as follows:
\[
A^{\ast}(\mathcal{M}_{0,\mathcal{A}}) \cong \frac{\Z [D^{S}, S\subset \{2, \ldots, n\}, \sum _{i\in S}a_i >1]}{\langle \text{the following relations}\rangle},
\]
\begin{enumerate}
\item $D^{S}D^{T}=0$ unless one of the following holds:
$S\subseteq T,\; T\subseteq S, \; S\cap T = \emptyset .$
\item For any pair of two-elements subsets $\{i, j\}, \{k, l\}\subseteq \{2, \ldots , n\}$ with $i, k\leq m$, the following relation holds:
\[
\sum_{\substack{\{k, l\}\not\subseteq  S \\ \{i, j\}\subseteq S}}D^{S} = \sum_{\substack{\{k, l\}\subseteq\in S \\ \{i, j\}\not\subseteq S}}D^{S}
\]
\end{enumerate}
\end{thm}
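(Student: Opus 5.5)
The plan is to follow Keel's strategy for $\overline{\mathcal{M}}_{0,n}$, adapted to the heavy/light weight $\mathcal{A}$, in three steps. First, the boundary classes generate $A^{\ast}(\mathcal{M}_{0,\mathcal{A}})$. Second, the two families of relations hold. Third, there are no further relations. The indexing convention is that a boundary divisor of $\mathcal{M}_{0,\mathcal{A}}$ corresponds to a splitting $\{1,\ldots,n\}=S'\sqcup S$ with both parts of weight $>1$. We always name it by the part $S$ not containing the heavy point $1$. This is why the generators are the $D^{S}$ with $S\subset\{2,\ldots,n\}$ and $\sum_{i\in S}a_i>1$, with no symmetry relation $D^{S}=D^{S^c}$ as in Theorem~\ref{ChowDM}.

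\textbf{Generation.} Use the reduction morphism $\rho=\rho_{\mathcal{A}_0,\mathcal{A}}:\overline{\mathcal{M}}_{0,n}\to\mathcal{M}_{0,\mathcal{A}}$. It is a birational morphism of smooth projective varieties, so $\rho_{\ast}\rho^{\ast}=\mathrm{id}$ and $\rho_{\ast}$ is surjective on Chow groups. By Theorem~\ref{ChowDM} every class on $\overline{\mathcal{M}}_{0,n}$ is a polynomial in the boundary divisors. Since $\rho$ is birational, the projection formula gives $\rho_{\ast}(\rho^{\ast}\alpha\cdot\beta)=\alpha\cdot\rho_{\ast}\beta$. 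Using this, one reduces to showing that the images of boundary strata are boundary strata of $\mathcal{M}_{0,\mathcal{A}}$ or vanish. Indeed, $\rho$ contracts exactly the strata in which some bubble carries only light points, and the surviving strata are closures of intersections of the $D^{S}$. Alternatively, one can stratify $\mathcal{M}_{0,\mathcal{A}}$ by dual graphs. Each open stratum is a product of open moduli spaces $\mathcal{M}_{0,k}$-type configuration spaces, which have Chow groups only in degree $0$. This again yields generation by the closures of the strata, i.e.\ by monomials in the $D^{S}$.

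\textbf{Relations.} Relation (1) is geometric. If none of $S\subseteq T$, $T\subseteq S$, $S\cap T=\emptyset$ holds, the two splittings are incompatible: no stable $\mathcal{A}$-weighted tree realizes both, so $D^{S}\cap D^{T}=\emptyset$. For relation (2), take the forgetful map $\pi:\mathcal{M}_{0,\mathcal{A}}\to\overline{\mathcal{M}}_{0,4}\cong\C P^{1}$ remembering the points $1,i,j,k,l$ with $i,k\le m$ heavy. Having heavy points available guarantees that, after forgetting the other points and stabilizing, the marked points remaining pairwise distinct, so $\pi$ is a morphism. (This needs checking when $j$ or $l$ is light.) Pull back the linear equivalence of the boundary points of $\overline{\mathcal{M}}_{0,4}$. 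Computing $\pi^{\ast}$ of each point as the sum of the $D^{S}$ separating the corresponding pairs gives exactly the two sums in (2), written with the $S\not\ni 1$ convention.

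\textbf{Completeness, the expected main obstacle.} Let $R$ denote the ring presented in the statement. We have a surjection $R\to A^{\ast}(\mathcal{M}_{0,\mathcal{A}})$ and must show it is injective. The approach I would try first is Keel's inductive blow-up argument. The heavy/light space is obtained from a simpler space ($\C P^{n-3}$ for $m=2$ after light points collide, or a fibered power over $\overline{\mathcal{M}}_{0,m}$) by a sequence of blow-ups along smooth centers. For each blow-up, Keel's formula $A^{\ast}(\tilde X)=A^{\ast}(X)[E]/(\ker(A^{\ast}X\to A^{\ast}Z)\cdot E,\;P(-E))$ applies, since the restriction maps to the centers are surjective. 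The work is to match the resulting presentation with (1)--(2) by induction on the number of light points. If that bookkeeping becomes unwieldy, the fallback is a dimension count. I would compute the Betti numbers of $\mathcal{M}_{0,\mathcal{A}}$ from the stratification, e.g.\ by counting points over $\mathbb{F}_q$. Then I would exhibit a spanning set of monomials in $R$ (nested chains of subsets $S$ with bounded exponents) of the same size in each degree. Surjectivity then forces an isomorphism.
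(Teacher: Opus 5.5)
The paper gives no proof of this statement. It quotes it from \cite{KKL}, where it is obtained through tropical geometry. Put the point $1$ at infinity. The interior of $\mathcal{M}_{0,\mathcal{A}}$ is then the complement in $\C P^{n-3}=\mathbb{P}(\C^{n-1}/\C(1,\ldots,1))$ of the graphic arrangement $z_i=z_j$, $\{i,j\}\subseteq\{2,\ldots,n\}$, $i\le m$. By Cavalieri--Hampe--Markwig--Ranganathan, $\mathcal{M}_{0,\mathcal{A}}$ is its tropical (equivalently, De Concini--Procesi wonderful) compactification for the minimal building set. Its Chow ring is then the Feichtner--Yuzvinsky ring of the Bergman fan. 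Under this dictionary:
\begin{itemize}
\item the connected flats are exactly the $S$ with $\sum_{i\in S}a_i>1$;
\item two of them are nested iff they are comparable or disjoint, which gives (1);
\item the atoms are the pairs $\{i,j\}$ with $i\le m$, which gives (2).
\end{itemize}

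\textbf{Main gap.} Your Step 3 is where the content of the theorem lies, and it is not supplied. Neither strategy is carried far enough to show that (1)--(2) are \emph{all} the relations. Keel's induction needs an explicit blow-up sequence with surjective restrictions to the centres, and \emph{a fibered power over $\overline{\mathcal{M}}_{0,m}$} does not give one. The sequence that works for every $m\ge 2$, not only $m=2$, is the wonderful-model one. Blow up $\C P^{n-3}$ along the proper transforms of $L_S=\{z_s=z_t,\ s,t\in S\}$, with $S$ running over the index set of your generators, in order of increasing dimension. Then $D^S$ is the exceptional divisor over $L_S$, or the proper transform of a hyperplane when $|S|=2$. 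With this identification you can quote the De Concini--Procesi/Feichtner--Yuzvinsky presentation of the Chow ring of a wonderful model instead of redoing the induction. Your fallback (Betti numbers plus a spanning set of matching size) is logically sound over $\Z$, given that $A^k$ is free of rank $b_{2k}$. However, the required spanning set is the nested-monomial basis, and proving it spans $R$ needs a Gr\"obner-type straightening argument that you have not sketched.

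\textbf{Smaller errors.}

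\emph{Generation via $\rho$.} It is false that $\rho$ contracts exactly the strata with an all-light component. For light $j,l$, $\rho$ maps $D_{\{j,l\}}\subset\overline{\mathcal{M}}_{0,n}$ birationally onto the collision divisor $\Delta_{jl}=\{p_j=p_l\}$, which is not one of the $D^S$. Also, a component carrying only light points and two nodes is $\mathcal{A}$-stable and survives. So the images of strata are not just monomials in the $D^S$, and $[\Delta_{jl}]$ has to be rewritten. For example, comparing pullbacks of points under $\mathcal{M}_{0,\mathcal{A}}\to\overline{\mathcal{M}}_{0,\{1,i,j,l\}}\cong\C P^1$ with $2\le i\le m$ gives $[\Delta_{jl}]=\sum_{S\supseteq\{i,j\},\,l\notin S}D^S-\sum_{S\supseteq\{j,l\},\,i\notin S}D^S$. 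Your second generation argument avoids this issue and is fine: open dual-graph strata are products of hyperplane-arrangement complements (light points may still collide), hence have Chow group $\Z$ in degree $0$.

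\emph{Relation (2).} A map to $\overline{\mathcal{M}}_{0,4}$ remembering the five points $1,i,j,k,l$ is meaningless when the two pairs are disjoint. For overlapping pairs, use the four points $\{1\}\cup\{i,j\}\cup\{k,l\}$; then (2) is Keel's four-point relation in the $1\notin S$ convention. You must compare the two boundary points other than the light--light collision point, because the pullback of that one contains $\Delta_{jl}$. For disjoint pairs, chain through the pair $\{i,k\}$.
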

The space $\overline{\mathcal{M}}_{0, n}$ is a special case of the above for the  weight vector $\mathcal{A}$ for which $m=n$.  The Losev-Manin space  $\bar{L}_{0, 5}$ is  obtained for $m=2$, that is for the weight vector $\mathcal{A} = (1, 1, a_3,  a_4, a_5)$, $a_i < \frac{1}{3}$.

Theorem~\ref{ChowLM}  for $n=5$ gives:
\begin{cor}
The cohomology ring $H^{\ast}(\bar{L}_{0, 5})$ has four  $2$-degree generators, which can be represented by  $D^{23}, D^{24}, D^{25}, D^{234}$ and they satisfy   the relations
\begin{equation}\label{Dij}
D^{23}D^{24}=D^{23}D^{25}=D^{24}D^{25}=D^{25}D^{234}=0,
\end{equation}
\[ (D^{2i})^{2} = (D^{234})^{2} = - D^{23}D^{234} = -  D^{24}D^{234}.
\]
\end{cor}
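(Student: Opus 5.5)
Since $\bar{L}_{0,5}$ is a Hassett space with weight $\mathcal{A}=(1,1,a_3,a_4,a_5)$, $a_i<\frac13$, we take Theorem~\ref{ChowLM} with $n=5$, $m=2$ and simply write out the generators and relations. The indices $i\le m$ in $\{2,\ldots,n\}$ are only $i=2$. The subsets $S\subset\{2,3,4,5\}$ with $\sum_{i\in S}a_i>1$ are therefore the sets containing $2$ together with at least one light index: $\{2,3\},\{2,4\},\{2,5\},\{2,3,4\},\{2,3,5\},\{2,4,5\}$ and $\{2,3,4,5\}$. Here we must make sure the indexing convention of~\cite{KKL} allows $S=\{2,3,4,5\}$, i.e.\ complement $\{1\}$. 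If that set occurs it should give the trivial (zero) divisor, because a one-point complement does not yield a boundary divisor. We will check this against the boundary description: the boundary divisors of the hexagon are exactly the six sets listed before it. This gives six divisor classes, matching the six facets of the hexagon, so $H^2$ has rank $6-2=4$. The linear relations (2) must therefore cut six classes down to four.

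First we write out the linear relations (2). Take $\{i,j\}=\{2,j\}$ and $\{k,l\}=\{2,l\}$, with $k=2$ heavy and $j,l$ light. The relation says that the sum of $D^S$ over $S\supseteq\{2,j\}$ with $l\notin S$ equals the sum over $S\supseteq\{2,l\}$ with $j\notin S$. For $(j,l)=(3,4)$ this reads $D^{23}+D^{235}=D^{24}+D^{245}$. For $(3,5)$ it reads $D^{23}+D^{234}=D^{25}+D^{245}$. The pair $(4,5)$ follows from these two. This gives two independent relations, so we can eliminate $D^{235}$ and $D^{245}$ and keep $D^{23},D^{24},D^{25},D^{234}$ as generators. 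In this step we must make sure that every admissible choice of pairs in (2) reduces to these two.

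Next we read off the multiplicative relations (1). Two sets $S,T$ always meet in $2$, so $D^SD^T=0$ unless one contains the other. This gives at once $D^{23}D^{24}=D^{23}D^{25}=D^{24}D^{25}=0$ and $D^{25}D^{234}=0$. The nonzero mixed products are $D^{23}D^{234}$ and $D^{24}D^{234}$, which are the adjacent-facet products in the hexagon. For the squares we multiply the linear relations by suitable generators and use the vanishing products. For example, multiplying $D^{23}+D^{234}=D^{25}+D^{245}$ by $D^{25}$ gives $(D^{25})^2=-D^{25}D^{245}$. Similar manipulations express each $(D^{2i})^2$ and $(D^{234})^2$ as minus an adjacent product. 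Finally, the top degree $H^4\cong\C$ (already $\Z$) is one-dimensional and all adjacent products equal the point class, so we get $(D^{2i})^2=(D^{234})^2=-D^{23}D^{234}=-D^{24}D^{234}$.

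The main obstacle is this last identification: showing that all adjacent products $D^SD^T$ with $S\subset T$ coincide. Purely formally, the relations in Theorem~\ref{ChowLM} only relate them through the multiplied linear relations. We would first try to derive their equality by multiplying the two linear relations by each generator and solving the resulting small linear system in $A^2$. If that is inconclusive, we would use the geometric fact that these are transversal intersections of two boundary curves in a smooth surface, each equal to the class of a point. Each self-intersection is then $-1$ times the point class, as for the toric hexagon, where each facet divisor has self-intersection $-1$.
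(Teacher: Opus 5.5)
Your proposal is correct and follows the paper's route: specialize Theorem~\ref{ChowLM} to $n=5$, $m=2$, keep the six divisors $D^{23},D^{24},D^{25},D^{234},D^{235},D^{245}$ (excluding $\{2,3,4,5\}$, as you do), and use the two independent linear relations to eliminate $D^{235},D^{245}$. The step you flag as the main obstacle is purely formal and needs no geometric input. Multiplying $D^{23}+D^{235}=D^{24}+D^{245}$ by $D^{234}$ gives $D^{23}D^{234}=D^{24}D^{234}$ directly, and multiplying $D^{23}+D^{234}=D^{25}+D^{245}$ by $D^{23}$ and by $D^{234}$ gives $(D^{23})^2=(D^{234})^2=-D^{23}D^{234}$. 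Applying the same trick with $D^{24}$, $D^{25}$ and $D^{245}$ shows $(D^{24})^2=(D^{25})^2=-D^{23}D^{234}$ as well.
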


Comparing to  Corollary~\ref{GLM}  we deduce:

\begin{lem}
The relation between  the generators for  $H^{\ast}(\bar{L}_{0, 5})$ given by~\eqref{Dij} and~\eqref{GLMR} are
\[
D^{23} = u_2-u_3, \; D^{24} = u_2-u_4, \; D^{25} = u_2-u_5, \; D^{234}=-u_2-u_3-u_4 -2u_5.
\]
\end{lem}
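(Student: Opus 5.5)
The plan is a direct verification inside the presentation of Corollary~\ref{GLM}, followed by a dimension count. First, I would record the additive structure implied by Corollary~\ref{GLM}. The degree-$2$ part is spanned by $u_2,u_3,u_4,u_5$. The degree-$4$ part is spanned by the single class $w:=u_2u_3$, since the relations give
\[
u_2u_4=u_2u_5=w,\quad u_3u_4=u_3u_5=u_4u_5=-9w,\quad u_2^2=11w,\quad u_3^2=u_4^2=u_5^2=16w.
\]
This matches the Betti numbers $1,4,1$ of $\bar{L}_{0,5}$, the toric manifold over the hexagon: by Poincaré duality $H^4$ is one-dimensional and $w\neq 0$.

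Second, the proposed classes $D^{23}=u_2-u_3$, $D^{24}=u_2-u_4$, $D^{25}=u_2-u_5$ and $D^{234}=-u_2-u_3-u_4-2u_5$ are linearly independent in $H^2$. Their coefficient matrix with respect to $u_2,\dots,u_5$ is visibly nonsingular, because $D^{234}$ is the only one of the four that involves $u_2$ with a nonzero total weight after the other three are subtracted. Hence they form a basis of $H^2$.

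Third, I would expand every product appearing in~\eqref{Dij} and reduce it to a multiple of $w$ using the table above. For example,
\[
D^{23}D^{24}=u_2^2-u_2u_3-u_2u_4+u_3u_4=(11-1-1-9)w=0.
\]
The same reduction gives $D^{23}D^{25}=D^{24}D^{25}=0$, and
\[
D^{25}D^{234}=-(11+1+1+2-1+9+9-32)w=0.
\]
It also gives $(D^{2i})^2=(11-2+16)w=25w$ and $D^{23}D^{234}=D^{24}D^{234}=-25w$. Finally,
\[
(D^{234})^2=\bigl(11+16+16+64+2(1+1+2-9-18-18)\bigr)w=25w.
\]
Together these reproduce all the relations of~\eqref{Dij}, with common value $25w$.

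Finally, I would conclude that the assignment of the Chow generators to these elements defines a ring homomorphism from the presentation in~\eqref{Dij} to $H^*(\bar{L}_{0,5})$. It is an isomorphism because it is bijective in degree $2$ and maps onto the nonzero class $25w$ in degree $4$. Both rings are generated in degree $2$ with the same Betti numbers, so this identification agrees with the cohomology ring of Corollary~\ref{GLM}.

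The main obstacle is not the verification itself but making sure the relation table from Corollary~\ref{GLM} is consistent and correctly signed. In particular, one must confirm that $w\neq 0$, and not accidentally zero through the relation $u_2u_3^2=0$, which lives only in degree $6$. If the sign conventions for the $D^S$ turn out to differ, I would first try rescaling $D^{234}$ by $-1$; all the quadratic relations above are invariant under that change.
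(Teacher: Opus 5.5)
Your proposal is correct and is essentially the paper's proof: the paper also just expands each product in~\eqref{Dij} using the relation table of Corollary~\ref{GLM} and gets the common value $25u_2u_3$ (your linear-independence check, where the coefficient determinant is $-5$, and your Betti-number argument only add completeness). One caution: your closing fallback remark is false, since the relations $(D^{2i})^2=-D^{23}D^{234}=-D^{24}D^{234}$ are \emph{not} invariant under $D^{234}\mapsto -D^{234}$; this is harmless, because the signs as stated already verify.
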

\begin{proof}
From~\eqref{GLMR} it directly checks that 
\[
(u_2-u_3)(u_2-u_4) = (u_2-u_3)(u_2-u_5) = (u_2-u_4)(u_2-u_5)=0,\]
 as well that 
\[(u_2 -u_3)^2 = (u_2-u_4)^2 = (u_2-u_5)^2 = 25u_2u_3.\] 
 In addition,  
$(u_2-u_5)(u_2+u_3+u_4+2u_5)  = 0$, then
\[
  -(u_2-u_3)(u_2+u_3+u_4+2u_5) = - (u_2-u_4)(u_2+u_3+u_4+2u_5) = -25u_2u_3\]
 and 
\[(u_2+u_3+u_4+2u_5)^2= 25u_2u_3.
\]
\end{proof}

\subsection{Comparisons with a toric manifold}

The integral cohomology of smooth complete toric varieties with moment polytope $P$ over the  vertices $v_1, \ldots , v_n$  is  well known~\cite{BP} to be determined by its face ring  $I$  and the linear relations $J$ which are  defined by the stabilizers of the codimension two submanifolds in $X$, which maps  by the moment map to the facets of $P$.  Equivalently, the relations $J$  are determined by   the   normal vectors  of the facets of $P$, that is altogether 
\begin{equation}\label{ST}
H^{\ast} (X)\cong \Z [v_1, \ldots ,v_n]/\langle  I +J\rangle.
\end{equation}

\begin{rem}
In~\cite{Dan} it is proved an isomorphism of the Chow ring $A^{\ast}(X)$ and cohomology ring~\eqref{ST}.
\end{rem}

\subsubsection{The case of $\bar{L}_{0, 5}$}
The Losev-Manin spaces $\bar{L}_{0, n}$ can be exhibited as  well as permutohedral toric varieties. In particular $\bar{L}_{0, 5}$ is a toric variety over permutahedron $P_{e}^{2}$, which is hexagon. 
We recall the toric description of $\bar{L}_{0, 5}$ from~\cite{BT}. The Losev -Manin space is isomorphic to the hypersurface in $(\C P^{1})^{3}$ given by the equation
\begin{equation}\label{LM}
c_{34}c_{35}^{'}c_{45} = c_{34}^{'}c_{35}c_{45}^{'}.
\end{equation}
We consider the representation $\rho : T^2\to T^3$ defined by $\rho (t_1, t_2) = (t_1, t_2, \frac{t_{2}}{t_1})$  and the action  of $T^2$ on $\bar{L}_{0, 5}$ given by
\begin{equation}\label{T2}
{\bf t} \cdot ((c_{34}: c_{34}^{'}), (c_{35};c_{35}^{'}), (c_{45}:c_{45}^{'})) =  ((t_1c_{34}: c_{34}^{'}), (t_2c_{35};c_{35}^{'}), (\frac{t_2}{t_1}c_{45}:c_{45}^{'}) ).
\end{equation}
The two-dimensional submanifolds which correspond to the edges of hexagon, together with  stabilizers  $S_i\subset T^2$ are as follows:
\begin{itemize}
\item $c_{34} = c_{35}=0$,  $S_1=  \{(t, t)\}$, \;\; $ c_{34}= c_{45}^{'}=0$, $S_2=  \{(1, t)\}$,
\item $c_{34}^{'} = c_{35}^{'} =0$, $S_4=  \{(t, t)\}$, \;\; $c_{34}^{'} = c_{45}=0$,  $S_5=  \{(1, t)\}$,
\item $c_{35}=c_{45}=0$,  $S_6=  \{(t, 1) \}$, \;\; $c_{35}^{'} = c_{45}^{'} =0$, $S_3=  \{(t, 1)\}$.
\end{itemize}
 The enumeration of  stabilizers follows the sequential enumeration of  edges of hexagon.  

We consider the forms    $\theta _1 = v_1+v_3+v_4+v_6$ and $\theta _2 = v_1+v_2 +v_4+v_5$, which are in the standard way  defined by the characteristic functions of the edges of hexagon.

By~\eqref{ST}   the cohomology of $\bar{L}_{0, 5}$ is  determined as follows:

\begin{cor}
The cohomology ring $H^{\ast}(\bar{L}_{0, 5})$ is generated by $2$-degree classes $v_1, \ldots ,v_6$ which satisfy relations
\begin{equation}\label{toric}
 v_1+v_3+v_4+v_6 =0, \;\;  v_1+v_2 +v_4+v_5 =0,
\end{equation}
\[
v_1v_3=v_1v_4=v_1v_5 = v_2v_4=v_2v_5=v_2v_6=v_3v_5=v_3v_6=v_4v_6=0.
\]
\end{cor}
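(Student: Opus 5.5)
The corollary follows by specializing the presentation (\ref{ST}) to the hexagon with the characteristic data recorded above. I will first fix the combinatorics of the hexagon $P^2_e$. Its facets are the six edges, numbered cyclically as in the list of two-dimensional submanifolds, with stabilizers $S_1=S_4=\{(t,t)\}$, $S_2=S_5=\{(1,t)\}$, $S_3=S_6=\{(t,1)\}$. Let $v_i\in H^2(\bar{L}_{0,5})$ be the class dual to the characteristic submanifold over the $i$-th edge. I take as known, from~\cite{BP} and~\cite{Dan}, that $H^{\ast}(X)\cong \Z[v_1,\ldots,v_6]/\langle I+J\rangle$ for a smooth projective toric variety $X$. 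Over $\C$ this is also the cohomology.

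The Stanley--Reisner ideal $I$ is generated by the products $v_iv_j$ for edges $i,j$ whose intersection is empty. In a hexagon, edges $i$ and $j$ meet exactly when $j\equiv i\pm 1 \pmod 6$. There are therefore $\binom{6}{2}-6=9$ non-adjacent pairs: $\{1,3\},\{1,4\},\{1,5\},\{2,4\},\{2,5\},\{2,6\},\{3,5\},\{3,6\},\{4,6\}$. No higher-order generators are needed, since a hexagon is flag. These nine pairs give the monomial relations of the corollary. One should check that the enumeration of stabilizers really is cyclic. I would do this by verifying that the submanifolds for consecutive indices intersect in a fixed point of (\ref{LM}). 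For example, $\{c_{34}=c_{35}=0\}$ and $\{c_{34}=c_{45}'=0\}$ meet at the point $c_{34}=c_{35}=c_{45}'=0$, which lies on (\ref{LM}). Conversely, non-consecutive pairs must be disjoint; for example, $\{c_{34}=c_{35}=0\}\cap\{c_{34}'=c_{35}'=0\}=\emptyset$.

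The linear ideal $J$ comes from the characteristic map. For each edge $i$, the stabilizer $S_i$ is the image of a primitive vector $\lambda_i\in\Z^2$. Reading the list above gives $\lambda_1=\lambda_4=(1,1)$, $\lambda_2=\lambda_5=(0,1)$ and $\lambda_3=\lambda_6=(1,0)$, with signs fixed by the orientation of the normals. The signs are the delicate point: in a smooth fan, opposite edges of the hexagon have opposite normals. The linear relations are $\sum_i\langle e_k^{\ast},\lambda_i\rangle v_i=0$ for $k=1,2$. With all signs $+$, as in the paper's convention, this yields $\theta_1=v_1+v_3+v_4+v_6$ and $\theta_2=v_1+v_2+v_4+v_5$. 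Putting $I$ and $J$ together gives the stated presentation.

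The main obstacle is the sign/orientation issue in $J$. Stabilizers determine the characteristic vectors only up to sign. For the presentation to be correct, the signs must be chosen so that $\lambda_1,\ldots,\lambda_6$ form a complete smooth fan. As a check, I would confirm that the resulting ring has Betti numbers $1,4,1$. Six generators minus two linear relations leaves four degree-$2$ classes. In degree $4$, the nine monomial relations together with the two linear relations should cut down to a one-dimensional space. I would verify this by checking that $v_1v_2$ generates $H^4$ and that the surviving adjacent products are all equal to $\pm v_1v_2$. This also serves as the consistency check against Corollary~\ref{GLM}.
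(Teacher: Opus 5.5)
Your route is the paper's: you feed the Stanley--Reisner ideal of the hexagon and the stabilizer data into \eqref{ST}, and your treatment of the Stanley--Reisner part, including the adjacency check, is fine. The gap is the sign step, which you flag and then skip. You define $v_i$ as the class dual to the $i$-th characteristic submanifold, and you note that opposite edges need opposite normals. You then take all signs $+$ anyway. The vectors $(1,1),(0,1),(1,0),(1,1),(0,1),(1,0)$ are not the rays of a complete fan ($\lambda_1=\lambda_4$), and for the divisor classes the stated relation is false. Up to a global sign, the only cyclically ordered choice is $\lambda_1=(1,1)$, $\lambda_2=(0,1)$, $\lambda_3=(-1,0)$, $\lambda_4=(-1,-1)$, $\lambda_5=(0,-1)$, $\lambda_6=(1,0)$. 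This gives $J=\langle v_1-v_3-v_4+v_6,\ v_1+v_2-v_4-v_5\rangle$. If in addition $v_1+v_3+v_4+v_6=0$ held, then $2(v_3+v_4)=0$. Multiplying by $v_2$ and using $v_2v_4=0$ gives $2v_2v_3=0$, which is impossible since $v_2v_3$ is the point class.

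The missing observation is that the substitution $v_3\mapsto -v_3$, $v_4\mapsto -v_4$, $v_5\mapsto -v_5$ sends each generator of $I$ to $\pm$ itself. It also carries the correct $J$ onto $\langle v_1+v_3+v_4+v_6,\ v_1+v_2+v_4+v_5\rangle$. Hence the corollary holds with $v_1,v_2,v_6$ the divisor classes and $v_3,v_4,v_5$ their negatives. Equivalently, in the Davis--Januszkiewicz form of \eqref{ST}, the all-$+$ signs are an admissible omniorientation, since consecutive vectors are still unimodular. Under that omniorientation, the characteristic submanifolds over edges $3,4,5$ carry the reversed orientation.

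Your proposed Betti-number check cannot settle the sign question. The same substitution shows that every sign pattern gives an isomorphic ring, so the check passes for the all-$+$ data even though those vectors do not form a fan.

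Incidentally, \eqref{T2} shows that the pointwise stabilizer of $\{c_{34}=c_{45}'=0\}$ is $\{(t,1)\}$, since only $c_{35}$ moves; the listed stabilizers of edges $3,5,6$ have their coordinates swapped in the same way. Because this swap fixes $(1,1)$, it only interchanges $\theta_1$ and $\theta_2$, so $J$ is unaffected.
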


Form Corollary~\ref{GLM} we deduce:

\begin{lem}
The relation between  the generators for  $H^{\ast}(\bar{L}_{0, 5})$ given by~\eqref{toric} and~\eqref{GLMR} are
\[
v_1 = u_2-u_3, \; v_3 = u_2-u_4, \; v_5 = u_2-u_5, \; v_2 = -u_2-u_3-u_4-2u_5,
\]
\[
v_4 = -u_2-2u_3-u_4-u_5, \;  v_6 = -u_2 -u_3-2u_4-u_5.
\]
\end{lem}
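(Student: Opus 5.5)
Since both \eqref{toric} and \eqref{GLMR} are presentations of $H^{\ast}(\bar{L}_{0,5})$, I will check that the proposed assignment respects the relations and is invertible. The map goes from the toric presentation to the $u$-presentation, sending $v_1,\ldots ,v_6$ to the stated linear forms in $u_2,u_3,u_4,u_5$. First I verify the two linear relations of \eqref{toric}. We have
\[
v_1+v_3+v_4+v_6=(u_2-u_3)+(u_2-u_4)-(u_2+2u_3+u_4+u_5)-(u_2+u_3+2u_4+u_5),
\]
which is $-4u_3-4u_4-2u_5$. This is not identically zero, so the check has to be done modulo the ambient identification $u_1=-(u_2+\cdots+u_5)$ and the degree-$2$ relations hidden in the presentation. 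If it still fails, I would instead use the relation $x_1+x_2=u_1+u_2$ from Corollary~\ref{GLM}'s proof. Failing that, I would reinterpret the $v_i$ as the combinations pinned down by the edge structure, as the comparison lemma with the Chow ring did, and adjust the signs accordingly. This step is the first real check, and I expect the bookkeeping of which degree-$2$ relations are available to be the main obstacle. Both presentations have rank-$4$ degree-$2$ parts: six generators minus two linear relations on the toric side, and $u_2,\ldots,u_5$ on the other. So the linear relations must hold exactly in the free degree-$2$ part, and I must verify them by direct coefficient comparison.

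Next I verify the nine monomial relations $v_iv_j=0$ for nonadjacent edges. This amounts to checking that nine quadratic forms in $u_2,\ldots,u_5$ vanish modulo the relations of Corollary~\ref{GLM}. The most economical route is to reduce every degree-$4$ monomial to a multiple of the single class $u_2u_3$, using
\[
u_2u_j=u_2u_3,\quad u_ku_l=-9u_2u_3,\quad u_2^2=11u_2u_3,\quad u_j^2=16u_2u_3 \qquad (j,k,l\in\{3,4,5\},\ k\neq l),
\]
and then to check that each coefficient is zero. Several of these checks are already recorded in the previous lemma, since $v_1,v_3,v_5,v_2$ coincide with $D^{23},D^{24},D^{25},D^{234}$. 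In particular $v_1v_3=v_1v_5=v_3v_5=v_2v_5=0$ are known. The remaining products involving $v_4$ and $v_6$ follow from the $S_3$-symmetry permuting $u_3,u_4,u_5$, which preserves the relations of Corollary~\ref{GLM}. Under this symmetry $v_4$ and $v_6$ are images of $v_2$, since $v_2$ carries the coefficient $2$ on $u_5$, $v_4$ on $u_3$, and $v_6$ on $u_4$. So I will obtain each of the remaining relations by transporting one already-known relation.

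Finally, I need that the induced map is an isomorphism. Four of the images are linearly independent in degree $2$, namely $u_2-u_3$, $u_2-u_4$, $u_2-u_5$ and $-u_2-u_3-u_4-2u_5$; their determinant is nonzero. Hence the map is surjective in degree $2$, and therefore onto the whole ring, because both rings are generated in degree $2$. Both rings have Poincaré polynomial $1+4t^2+t^4$, the hexagon toric manifold having $b_2=6-2=4$. A graded surjection between rings of equal finite dimension is an isomorphism. It remains to see that the top class is nonzero, i.e.\ $u_2u_3\neq0$, which holds since $H^4(\bar{L}_{0,5})\cong\C$. This completes the plan.
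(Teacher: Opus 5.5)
There is a genuine gap at your first step; you detected it but did not resolve it. Your computation $v_1+v_3+v_4+v_6=-4u_3-4u_4-2u_5$ is correct, and likewise $v_1+v_2+v_4+v_5=-4u_3-2u_4-4u_5$. None of your fallbacks can rescue it:
\begin{itemize}
\item $u_1$ occurs in none of the six formulas, so substituting for it changes nothing;
\item the relation $x_1+x_2=u_1+u_2$ only expresses the already eliminated class $x_1+x_2$;
\item every relation in \eqref{GLMR} is quadratic, so $H^2$ is free on $u_2,u_3,u_4,u_5$, as you yourself observe.
\end{itemize}
Hence the stated assignment does not respect \eqref{toric} as printed, and no ring homomorphism is defined. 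The lemma cannot be proved in its literal form. The paper's bare ``from Corollary~\ref{GLM} we deduce'' is the same direct check and runs into the same obstruction.

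The missing idea is the sign of the characteristic vectors. The circles $S_i$ determine the primitive normals only up to sign. Opposite edges carry the same circle, so their normals must be opposite. A complete smooth fan realizing the given cyclic order is $\lambda_1=(1,1)$, $\lambda_2=(0,1)$, $\lambda_3=(-1,0)$, $\lambda_4=(-1,-1)$, $\lambda_5=(0,-1)$, $\lambda_6=(1,0)$. Its linear relations are $v_1-v_3-v_4+v_6=0$ and $v_1+v_2-v_4-v_5=0$, and the six formulas in the statement satisfy these identically (compare coefficients of $u_2,\dots,u_5$). Equivalently, if you keep \eqref{toric} as printed, you must replace $v_3,v_4,v_5$ by their negatives; this leaves the nine monomial relations untouched. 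You should commit to one of these corrected statements rather than leave the step open.

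Once the signs are fixed, the rest of your plan works, except for one inaccuracy in the $S_3$ argument. Write $w_j=u_2-u_j$ and $z_j=-(u_2+u_3+u_4+u_5+u_j)$. Then $(v_1,v_3,v_5)=(w_3,w_4,w_5)$ and $(v_4,v_6,v_2)=(z_3,z_4,z_5)$.
\begin{itemize}
\item The previous lemma gives $w_iw_j=0$ for $i\neq j$ and $w_5z_5=0$. Transporting the latter yields $v_1v_4=v_3v_6=0$.
\item However, $v_2v_4$, $v_2v_6$, $v_4v_6$ are products $z_iz_j$ with $i\neq j$, and these never occur in the previous lemma. So your claim that each remaining relation comes from transporting one already-known relation fails for these three.
\item One fresh computation fixes this, e.g. $v_2v_4=(u_2+u_3+u_4+2u_5)(u_2+2u_3+u_4+u_5)=(15+15-10-20)\,u_2u_3=0$. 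The transpositions $(3\,4)$ and $(4\,5)$ then give $v_2v_6=0$ and $v_4v_6=0$.
\end{itemize}
Your surjectivity argument (the four listed classes have determinant $\pm 5$) and the dimension count $1+4t^2+t^4$ then complete the proof. As a sanity check, all six adjacent products equal $-25u_2u_3$ and all squares equal $25u_2u_3$. This is consistent with every edge of the hexagon having self-intersection $-1$ once the point class is taken to be $-25u_2u_3$.
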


\subsubsection{The case $\overline{\mathcal{M}}_{0, 5}$}
The Deligne-Mumford space  $\overline{\mathcal{M}}_{0, 5}$ can be obtained by blowing up the Losev-Manin space $\bar{L}_{0, 5}$ given by~\eqref{LM} at the point $S=((1:1), (1:1), (1:1))$. It is the same as doing blow up of $\C P^2$ at four points $(1:0:0:), (0:1:0), (0:0:1), (1:1:1)$, see for example~\cite{BT2}. 

\begin{rem}
We recall that both spaces can be interpreted as compactification of the space of parameters of the main stratum $W$ for $G_{5,2}$, that is of the space $W/(\C ^{\ast})^5$, which is given by~\eqref{LM} with additional  assumptions that $c_{ij}, c_{ij}^{'}\neq 0$ and $c_{ij}\neq c_{ij}^{'}$. In the case of $\bar{L}_{0, 5}$ the compactification is given by $9$  divisors $\C P^1$, while for $\overline{\mathcal{M}}_{0,5}$ it is given by $10$ such divisors, where the additional one comes from blowing up.
\end{rem}
The  blowing up of  $\C P^2$ at four points in general position  yields to a  diffeomorphism 
$
\overline{\mathcal{M}}_{0, 5}\cong \C P^{2}\# 4\overline{\C P^2}.
$

Since $\bar{L}_{0, 5}$ is a permutohedral variety over hexagon, we analyze a  relation between $\overline{\mathcal{M}}_{0, 5}$ and   the   toric manifold  over heptagon, and the heptagon  is obtained by cutting a hexagon at one vertex.  In other words, a  heptagon can be obtained from the standard triangle by four successive vertex cutting. The corresponding toric manifod  $X_7$  will be  diffeomorphic to $ \C P^{2}\# 4\overline{\C P^2}$.

Recall that the del Pezzo surface $dP_5$ of degree five is as well  defined as the blow up of $\C P^2$ at four points in general position. Altogether we note:

\begin{cor}
There are diffeomorphisms
\begin{equation}\label{diff}
\overline{\mathcal{M}}_{0, 5} \cong dP_{5}\cong X_7.
\end{equation}
\end{cor}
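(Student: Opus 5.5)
The plan is to establish the three diffeomorphisms in \eqref{diff} separately, each by identifying the space in question with $\C P^2$ blown up at four points in general position, and then to show that all such blow-ups are diffeomorphic.

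\emph{First}, for $\overline{\mathcal{M}}_{0,5}$, we use its description as the blow-up of the Losev--Manin space $\bar{L}_{0,5}$, given by \eqref{LM}, at the point $S=((1:1),(1:1),(1:1))$. Equivalently, it is the blow-up of $\C P^2$ at the four points $(1:0:0),(0:1:0),(0:0:1),(1:1:1)$, which are in general position because no three of them are collinear. This identification is Kapranov's classical description and is the one recalled in the text above from~\cite{BT2}.

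\emph{Second}, for $dP_5$, the degree five del Pezzo surface is by definition the blow-up of $\C P^2$ at four points in general position. Since $PGL_3(\C)$ acts simply transitively on ordered quadruples of points in general position, any such quadruple can be moved to the standard four points above. Hence $dP_5$ is biholomorphic, and in particular diffeomorphic, to $\overline{\mathcal{M}}_{0,5}$.

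\emph{Third}, for $X_7$, we use the fact that cutting a vertex of a Delzant polygon corresponds to an equivariant blow-up of the toric manifold at the associated fixed point. Starting from the standard triangle, with $X=\C P^2$, four successive vertex cuts produce the heptagon, and so $X_7$ is a blow-up of $\C P^2$ at four points. These points are not in general position, since the later blow-ups are at torus-fixed points of already blown-up surfaces, so infinitely near points can occur. The main obstacle is therefore to show that the diffeomorphism type does not depend on the position of the centres.

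To handle this, we argue that blowing up a point $p$ of a complex surface $Y$ yields $Y\#\overline{\C P^2}$ smoothly. The construction is local in a ball around $p$, and the result is independent of $p$ up to diffeomorphism because $Y$ is connected and one can isotope one ball onto another. Applying this inductively four times gives
\[
X_7\cong \C P^2\#4\overline{\C P^2}\cong \overline{\mathcal{M}}_{0,5}.
\]

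As a consistency check, we verify that the numerical invariants agree. The Euler characteristic is $7$ in each case, since a heptagon has $7$ vertices and $3+4=7$. The second Betti number is $5$, matching the five degree-$2$ generators in Corollary~\ref{GDM}. The intersection form is $\langle 1\rangle\oplus 4\langle -1\rangle$, which is odd, indefinite and unimodular. One could alternatively conclude via Freedman's classification for simply connected $4$-manifolds, but the direct blow-up argument above is sufficient.
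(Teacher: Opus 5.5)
Your proposal is correct and follows essentially the same route as the paper, which likewise identifies all three spaces with $\C P^2$ blown up at four points and hence with $\C P^2\#4\overline{\C P^2}$: the standard four points for $\overline{\mathcal{M}}_{0,5}$, general points for $dP_5$, and four successive vertex cuts of the triangle for $X_7$. Your $PGL_3(\C)$-transitivity remark, and your observation that blowing up a point gives $Y\#\overline{\C P^2}$ whatever the centre (so infinitely near points in the toric case do no harm), spell out what the paper leaves implicit; the one caution is your closing aside, since Freedman's classification yields only homeomorphisms and the intersection form does not determine the smooth structure in dimension four, so that route could not replace the blow-up argument.
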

 
\begin{rem}
Note that this diffeomorphism does not define  a toric structure on $\overline{\mathcal{M}}_{0,5}$. We recall that in the blow up procedure of a toric manifold, the toric structure extends only in the case when the blowing up is doing along  a torus invariant submanifold. In our  case, there are only three fixed points in $\C P^2$, or, equivalently $S\in \bar{L}_{0, 5}$ is not a fixed point for $T^2$-action given by~\eqref{T2}.
\end{rem}

\bibliographystyle{amsplain}

 Victor M.~Buchstaber\\
Steklov Mathematical Institute, Russian Academy of Sciences\\ 
Gubkina Street 8, 119991 Moscow, Russia\\
E-mail: buchstab@mi.ras.ru
\\ \\ 

Svjetlana Terzi\'c \\
Faculty of Science and Mathematics, University of Montenegro\\
Dzordza Vasingtona bb, 81000 Podgorica, Montenegro\\
E-mail: sterzic@ucg.ac.me 

\end{document}